\newcommand{\conv}{\operatorname{conv}}
\newcommand{\Res}{\operatorname{Res}}
\newcommand{\Elim}{\operatorname{Elim}}
\newcommand{\Det}{\operatorname{Det}}
\newcommand{\A}{\mathbb{A}}
\newcommand{\C}{\mathbb{C}}
\newcommand{\K}{\mathbb{K}}
\newcommand{\N}{\mathbb{N}}
\renewcommand{\P}{\mathbb{P}}
\newcommand{\Q}{\mathbb{Q}}
\newcommand{\R}{\mathbb{R}}
\newcommand{\T}{\mathbb{T}}
\newcommand{\Z}{\mathbb{Z}}
\newcommand{\cA}{{\mathcal A}}
\newcommand{\cF}{{\mathcal F}}
\newcommand{\cO}{{\mathcal O}}
\newcommand{\cV}{{\mathcal V}}
\newcommand{\bft}{{\boldsymbol{t}}}
\newcommand{\bfxi}{{\boldsymbol{\xi}}}
\newcommand{\bfchi}{{\boldsymbol{\chi}}}
\newcounter{thm}
\numberwithin{thm}{section}
\numberwithin{equation}{section}
\theoremstyle{definition}
\newtheorem{definition}[thm]{Definition}
\newtheorem{remark}[thm]{Remark}
\newtheorem{example}[thm]{Example}
\theoremstyle{plain}
\newtheorem{lemma}[thm]{Lemma}
\newtheorem{proposition}[thm]{Proposition}
\newtheorem{theorem}[thm]{Theorem}
\newtheorem{corollary}[thm]{Corollary}
\begin{document}

\title{Sparse Nullstellensatz, resultants and determinants of complexes}

\author{Carlos D'Andrea}
\address{Departament de Matem\`atiques i Inform\`atica, Universitat de Barcelona. Gran
Via 585, 08007 Barcelona, Spain \&  Centre de Recerca Matem\`atica, Edifici C, Campus Bellaterra, 08193 Bellaterra, Spain
}
\email{cdandrea@ub.edu}
\urladdr{http://www.ub.edu/arcades/cdandrea.html}

\author{Gabriela Jeronimo}
\address{Departamento de Matem\'atica, FCEN, Universidad de Buenos Aires, Ciudad Universitaria - Pab. I, (1428) Buenos Aires, Argentina, and
CONICET-Universidad de Buenos Aires, Instituto de Investigaciones Matem\'aticas ''Luis A. Santal\'o'' (IMAS), Buenos Aires, Argentina}
\email{jeronimo@dm.uba.ar}
\urladdr{http://mate.dm.uba.ar/~jeronimo}

\begin{abstract}
We refine and extend a result by Tuitman on the supports of a B\'ezout identity satisfied by a finite sequence of sparse Laurent polynomials without common zeroes in the toric variety associated to their supports. When the number of these polynomials is one more than the dimension of the ambient space, we obtain a formula for computing the sparse resultant as the determinant of a Koszul type complex.
\end{abstract}

\subjclass[2010]{Primary 13P15; Secondary 14M25, 68W30}

\keywords{Hilbert's Nullstellensatz, sparse resultants, determinants of complexes}
\maketitle

\section{Introduction and statement of the main results}
Given a field $K,$  and a finite sequence $f_1,\ldots, f_k$ in $R_K:=K[t_1,\ldots, t_n],$   the classical {\em Hilbert's Nullstellensatz} (\cite{hil93}) states that the ideal generated by these elements is the whole ring $R_K$ if and only if the variety defined by them in the affine space $\A^n_{\overline{K}}$  over $\overline{K},$ the algebraic closure of $K,$ is empty.  The {\em effective Nullstellensatz } deals with the problem of giving an algorithmic procedure to decide if this is the case, and if so to compute a {\it B\'ezout identity}
\begin{equation}\label{bi}
\sum_{i=1}^k g_if_i=1
\end{equation}
from the input sequence $f_1,\ldots, f_k.$  Note that if degree bounds for the $g_i$'s are provided, then \eqref{bi} turns into an easy to verify Linear Algebra routine. This was the original approach of Grete Hermann in \cite{her26}, and a lot of improvement has been done since then, see for instance \cite{DKS13} and the references therein.

A first attempt to deal with this problem can be made by homogenizing the input, i.e.~considering $F_1,\ldots, F_k\in \tilde{R}_K:=K[t_0,\ldots,t_n]$, which are the homogenizations of $f_1,\ldots, f_k$ up to their total degrees $d_1,\ldots, d_k$ respectively. We can then  study the Koszul complex associated to this sequence:
\begin{equation}\label{occis}
Kos(\tilde{R}_K,F_1,\ldots, F_k):\ 0\to\ \tilde{R}_K^{k\choose k}\stackrel{\Psi_k}{\to} \tilde{R}_K^{k\choose k-1}\stackrel{\Psi_{k-1}}{\to}\ldots\stackrel{\Psi_2}{\to} \tilde{R}_K^{k\choose1}\stackrel{\Psi_1}{\to} \tilde{R}_K^{k\choose 0}\to 0,
\end{equation}
 where the maps $\Psi_i$ are defined by setting:
\begin{equation}\label{axi}
\Psi_i(e_{j_1,\ldots, j_i}):=\sum_{\ell=1}^i (-1)^{\ell+1} F_{j_\ell}\, e_{j_1,\ldots, \check{j_\ell},\ldots, j_i},
\end{equation}
and we denote the standard basis of the free $\tilde{R}_K$-module $ \tilde{R}_K^{k\choose i}$ with $\big(e_{j_1,\ldots, j_i}\big)_{1\leq j_1<\ldots <j_i\leq k}.$
Note that $\Psi_1=(F_1,\ldots, F_k).$
As this sequence is homogeneous, the maps in the complex \eqref{occis} are also graded as well, and hence we can consider the different graded pieces of \eqref{occis}. For a non-negative integer $d,$ the  $d$-th graded part of $Kos(\tilde{R}_K,F_1,\ldots, F_k)$ (the one which has degree $d$ in $\tilde{R}_K^{k\choose 0}$ above) will be denoted by $Kos(\tilde{R}_K,F_1,\ldots, F_k)_d.$
The following result is well-known, see Theorem $5.1$ in \cite{DD01} for the case $k=n+1.$ The general case can be reduced to this one by using standard properties of Koszul complexes of sets containing regular sequences (cf. \cite[Corollary 1.6.13]{BH93}).
\begin{theorem}\label{g0}
Let  $d\geq d_1+\ldots+d_k-n.$ The following are equivalent:
\begin{enumerate}
\item there are no common zeroes of $F_1,\ldots, F_k$ in projective space $\P^n_{\overline{K}};$
\item $Kos(\tilde{R}_K,F_1,\ldots, F_k)_d$ is exact as a complex of $K$-vector spaces;
\item the rightmost nontrivial map of $Kos(\tilde{R}_K,F_1,\ldots, F_k)_d$ is onto.
\end{enumerate}
\end{theorem}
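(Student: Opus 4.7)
The plan is to prove the implications cyclically: (2) $\Rightarrow$ (3) $\Rightarrow$ (1) $\Rightarrow$ (2). Only the last requires substantial work; the first two are essentially formal. Indeed, (2) $\Rightarrow$ (3) is immediate, since exactness of the graded complex at the rightmost non-zero position $\tilde{R}_K^{\binom{k}{0}} = \tilde{R}_K$ in degree $d$ is by definition the surjectivity of $\Psi_1$ onto $(\tilde{R}_K)_d$. For (3) $\Rightarrow$ (1), surjectivity of $\Psi_1$ in degree $d$ says that every homogeneous polynomial of degree $d$ lies in the ideal $I := \langle F_1, \ldots, F_k\rangle$; in particular, each $t_i^d$ for $i = 0, \ldots, n$ admits such an expression. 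A projective common zero $\bfxi \in \P^n_{\ov{K}}$ of $F_1, \ldots, F_k$ would then satisfy $\xi_i^d = 0$ for every $i$, contradicting the fact that $\bfxi$ has a non-zero coordinate.

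For (1) $\Rightarrow$ (2), I would first dispose of the case $k = n+1$ by a direct citation of \cite[Theorem 5.1]{DD01}, and then reduce the general case $k \geq n+1$ to this one following the suggestion in the excerpt. By Hilbert's Nullstellensatz, condition (1) forces $\sqrt{I} \supseteq (t_0, \ldots, t_n)$, so $I$ has grade $n+1$ in the Cohen--Macaulay ring $\tilde{R}_K$. After a generic $K$-linear change of coordinates on $F_1, \ldots, F_k$---which leaves $I$ unchanged and induces a graded chain isomorphism on the Koszul complexes---one can extract a homogeneous regular subsequence of length $n+1$ from the new generators. Applying the base case to this subsequence yields exactness of the corresponding Koszul subcomplex in the prescribed degree. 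The hint in the excerpt, \cite[Corollary 1.6.13]{BH93}, then upgrades this to acyclicity of the full Koszul complex on all $k$ generators, via the standard tensor decomposition of Koszul complexes and the observation that the intermediate cohomology modules are $(t_0, \ldots, t_n)$-torsion.

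The main obstacle I anticipate is verifying that the degree hypothesis $d \geq d_1 + \cdots + d_k - n$ is sharp enough to guarantee exactness at every position once the reduction is carried out: the base case of \cite{DD01} only directly delivers exactness in degree $\geq \sum_{\ell=1}^{n+1} d_{j_\ell} - n$ for the chosen regular subsequence, and one must track how the tensor factors corresponding to the remaining $F_j$'s contribute to the relevant degree shifts in the total complex. This amounts to a careful bookkeeping exercise with graded pieces of a total Koszul complex, and is precisely where the numerology $d_1 + \cdots + d_k - n$ of the theorem must be matched against the shifts produced by the tensor product. Beyond this, the rest of the argument is either elementary projective geometry or a direct application of the cited results.
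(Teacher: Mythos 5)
Your overall strategy coincides with the paper's, which gives no proof of this theorem beyond citing Theorem 5.1 of \cite{DD01} for $k=n+1$ and asserting that the general case reduces to it via \cite[Corollary 1.6.13]{BH93}. Your arguments for $(2)\Rightarrow(3)$ and $(3)\Rightarrow(1)$ are correct and complete: exactness at the rightmost spot is surjectivity by definition, and surjectivity in degree $d$ puts every $t_i^d$ in the ideal, which is incompatible with a projective common zero.

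The gap is in your sketch of $(1)\Rightarrow(2)$, and it sits exactly where you flag the ``main obstacle.'' First, a generic $K$-linear change of the generators does not in general produce a homogeneous regular subsequence of length $n+1$: linear combinations of forms of distinct degrees $d_i$ are not homogeneous, and combining only generators of equal degree need not yield $n+1$ forms cutting out a complete intersection (when the $d_i$ are pairwise distinct there is no genericity to exploit at all). Second, and more seriously, \cite[Corollary 1.6.13]{BH93} only gives vanishing of the Koszul homology $H_i$ for $i>k-(n+1)$; for $i\le k-n-1$ these modules are nonzero in general, and the observation that they are annihilated by $I$ (hence supported at the irrelevant ideal, hence finite-dimensional) only yields vanishing of their graded pieces in all sufficiently large degrees, not in the sharp degree $d\ge d_1+\cdots+d_k-n$. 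Pinning down that sharp degree is the actual content of the theorem; the standard route is the hypercohomology of the twisted Koszul complex of sheaves on $\P^n_{\overline{K}}$, whose terms are $\bigoplus_{|J|=i}\cO(d-\sum_{j\in J}d_j)$, using $H^j(\P^n,\cO(m))=0$ for $0<j<n$ and $H^n(\P^n,\cO(m))=0$ for $m\ge -n$ --- the last vanishing being precisely where the hypothesis $d-\sum_i d_i\ge -n$ enters. This is the same mechanism the paper deploys (with Musta\c{t}\u{a}'s vanishing theorem replacing Serre's computation) to prove its toric generalization, Theorem \ref{g1}, and it works uniformly in $k$ without any reduction to $k=n+1$. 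As written, your proposal reproduces the paper's citation-level argument but does not close the reduction it gestures at.
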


Theorem \ref{g0} provides an optimal bound (if $d< d_1+\ldots+d_k-n$ the result does not hold for $k=n+1$, see for instance Theorem $1.5.16$ in \cite{CD05a}) on the degrees of the $g_i$'s which implies a B\'ezout identity as in \eqref{bi} at the cost of requiring the input not to have zeroes not only in affine space but in a compactification of it, the $n$-th dimensional projective space $\P^n_{\overline{K}}.$  Indeed from the exactness of  $Kos(\tilde{R}_K,F_1,\ldots, F_k)_d$ one can find homogeneous polynomials $G_1,\ldots, G_k\in \tilde{R}_K$ such that
$\sum_i G_iF_i = t_0^d.$ By setting $t_0=1$, we recover $g_1,\ldots, g_k$ such that $\deg(g_i)\leq d_1+\ldots+d_k-n-d_i,$ satisfying \eqref{bi}, and hence solving the effective Nullstellensatz problem.
\begin{remark}
As we are dealing with homogeneous polynomials and their zeroes in projective space, for the sequence
$F_1,\ldots, F_k$ to have no common zeroes, by Krull's Hauptidealsatz (cf. Theorem $10.2$ in \cite{eis95}) we are necessarily forced to have $k\geq n+1.$
\end{remark}

\begin{remark}
The inequality $d\geq d_1+\ldots+d_k-n$ is also sharp, as it is also well-known (see for instance Theorem $1$ in Section $5.11$ of \cite{AGV85}) that if $k=n+1,$ the Jacobian of $F_1,\ldots, F_{n+1}$ has degree $d_1+\ldots+d_{n+1}-n-1,$ and does not belong to the ideal generated by the input polynomials if they define the empty variety in $\P^n_{\overline{K}},$ hence the rightmost nontrivial map of this graded piece of $Kos(\tilde{R}_K,F_1,\ldots, F_{n+1})$ cannot be onto.
\end{remark}

\begin{remark}\label{naiv}
The strongest hypothesis of requiring $F_1,\ldots, F_k$ not having common zeroes in $\P^n_{\overline{K}}$ actually forces very low bounds for the degrees of the $g_i$'s in the Nullstellensatz. In general it is expected that  $\deg(g_j)\leq \prod_{i=1}^k d_i-d_j,$ see for instance Theorem $1$ in \cite{DKS13}, and this bound is sharp. This shows that the approach to the Nullstellensatz via homogenization is far from being enough and sharp, and extra work should be done to deal with this problem in general.
\end{remark}

Of special interest is the so-called ``codimension one'' case, i.e. when $k=n+1.$ Here,  tools from Elimination Theory like resultants can be used to deal with this problem. To introduce them, consider the following generic homogeneous polynomials in $n+1$ variables $t_0,\ldots, t_n$ of respective positive degrees $d_1,\ldots, d_{n+1}:$

\begin{equation}\label{fig}
F^g_i:=\sum_{\alpha\in\Z_{\geq0}^{n+1},\,|\alpha|=d_i} c_{i,\alpha} t^\alpha, \ i=1,\ldots, n+1,
\end{equation}
with each of the $c_{i,\alpha}$ being a new indeterminate. Set $\K:=\Q(c_{i,\alpha}).$  We easily note that the sequence $F^g_1,\ldots, F^g_{n+1}$ does not have common zeroes in $\P^n_{\overline\K}.$ Hence, the complex  $Kos(\tilde{R}_\K,F^g_1,\ldots, F^g_{n+1})_d$  is exact for $d\geq d_1+\ldots+d_{n+1}-n$ thanks to Theorem \ref{g0}. In the special case $n=1,\,d=d_1+d_2-1,$ \eqref{occis} reduces to only one nontrivial map between $\K$-vector spaces of the same dimension $d_1+d_2,$ whose determinant equals the classical Sylvester resultant of the homogeneous bivariate forms $F_1$ and $F_2,$ see \cite[Chapter $3$ \S 1]{CLO05}.  This resultant can be generalized for $n>1:$ given positive degrees $d_1,\ldots, d_{n+1}$, there is an irreducible polynomial called $\Res_{d_1,\ldots, d_{n+1}}\in\Z[c_{i,\alpha}]$ called the resultant of $F^g_1,\ldots, F^g_{n+1},$ which vanishes after a specialization of the $c_{i,\alpha}$ in a field $K$  if and only if the specialized system arising from \eqref{fig} after this specialization has a solution in the projective space over $\overline{K},$ see \cite[Chapter $3$ \S 2]{CLO05} for its proper definition and basic properties.

In another direction, the determinant of a linear map among spaces of the same dimension with fixed bases can be generalized to the determinant of a finite exact complex of $K$-vector spaces of finite dimension, fixing bases in each of the nonzero terms of it. In practice, it involves the computation of maximal minors of all the nontrivial matrices representing the differentials in the aforementioned bases.  An introduction to this topic can be found in \cite{del87} but this concept goes also all the way back to Cayley, see for instance \cite[Appendix A]{GKZ94}. Computational aspects can be found in \cite{dem84,cha93}, we refer the reader to Section \ref{detc} for more.

The following is a classical result which generalizes the computation of the Sylvester resultant of two univariate polynomials by computing the determinant of a matrix. See \cite{cha93} and also Theorem $5.1$ in \cite{DD01} for proofs. In our case, all the bases that we are going to consider to compute determinants are the monomial bases of each of the terms in the complex.

\begin{theorem}\label{chom}
For  $d\geq d_1+\ldots+d_{n+1}-n,$ we have that:
\begin{enumerate}
\item Up to a sign,  the determinant of the complex $Kos(\tilde{R}_\K,F^g_1,\ldots, F^g_{n+1})_d$ is equal to $\Res_{d_1,\ldots, d_{n+1}}.$
\item This determinant can be computed as the $\gcd$ of the maximal minors of the $d$-th graded piece of $\Psi_{1},$  the rightmost nontrivial map in $Kos(\tilde{R}_\K,F^g_1,\ldots, F^g_{n+1})_d$.
\item Let $K$ be any field. For homogeneous polynomials $F_1,\ldots, F_{n+1}\in K[t_0,\ldots, t_n]$ of respective degrees $d_1,\ldots, d_{n+1},$ the complex $Kos(\tilde{R}_K,F_1,\ldots, F_{n+1})_d$ is exact if and only if $\Res_{d_1,\ldots, d_{n+1}}(F_1,\ldots, F_{n+1})\neq0.$
\end{enumerate}
\end{theorem}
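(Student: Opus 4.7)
My plan is to reduce the three parts to a single comparison between two elements of $\Z[c_{i,\alpha}]$ (the generic Cayley determinant and $\Res_{d_1,\ldots,d_{n+1}}$), and then recover the other statements by specialization and by tracking where in the complex the non-unit factors of this determinant live. Start by noting that the generic sequence $F^g_1,\ldots,F^g_{n+1}$ has no common zero in $\P^n_{\ov\K}$, e.g.\ by specializing to $(t_0^{d_1},\ldots,t_{n}^{d_{n+1}},\ldots)$; hence by Theorem \ref{g0} the complex $Kos(\tilde R_\K,F^g_1,\ldots,F^g_{n+1})_d$ is exact over $\K$ for $d\geq d_1+\ldots+d_{n+1}-n$. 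Its Cayley determinant $\Delta$ is then a well-defined element of $\K=\Q(c_{i,\alpha})$, computable as an alternating product of nonvanishing maximal minors of the differentials $\Psi_i$ as described in Section \ref{detc}.

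For (1), I would argue that $\Delta$ in fact lies in $\Z[c_{i,\alpha}]$ (this follows from a suitable choice of submatrices whose determinants are nonzero integer polynomials, combined with the fact that Cayley's construction is independent of these choices up to sign) and that $\Delta$ vanishes after a specialization of the coefficients to a field $K$ precisely when the specialized Koszul complex ceases to be exact. By Theorem \ref{g0} applied to the specialization, this is equivalent to the specialized system having a common projective zero, which is exactly the vanishing locus of $\Res_{d_1,\ldots,d_{n+1}}$. Since $\Res_{d_1,\ldots,d_{n+1}}$ is irreducible in $\Z[c_{i,\alpha}]$, we get $\Res_{d_1,\ldots,d_{n+1}}\mid \Delta$. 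To rule out higher powers or extra factors, I would compare multidegrees in each block $\{c_{i,\alpha}\}_\alpha$: a Hilbert-function bookkeeping on the graded piece $Kos(\tilde R_\K,F^g_1,\ldots,F^g_{n+1})_d$ yields the alternating sum $\sum_{j}(-1)^{j+1}\binom{n}{j-1}\dim_\K (\tilde R_\K)_{d-d_i-d_{j_2}-\cdots}$, which collapses to the B\'ezout number $\prod_{\ell\neq i}d_\ell$; this matches the known multidegree of $\Res_{d_1,\ldots,d_{n+1}}$ and forces $\Delta=\pm\Res_{d_1,\ldots,d_{n+1}}$.

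Part (2) falls out of the construction: the Cayley determinant equals the determinant of a maximal square submatrix of $\Psi_1$ divided by an alternating product of determinants of maximal submatrices of $\Psi_2,\Psi_3,\ldots$. Clearing denominators shows that every maximal minor of the $d$-th graded piece of $\Psi_1$ is divisible by $\Delta$, and by varying the choice of square submatrix (equivalently, varying the minor) one produces minors whose gcd is exactly $\Delta$; hence $\gcd$ of the maximal minors of $\Psi_1$ equals $\Delta=\pm\Res_{d_1,\ldots,d_{n+1}}$. Part (3) is then a direct specialization: given $F_1,\ldots,F_{n+1}\in K[t_0,\ldots,t_n]$, Theorem \ref{g0} says $Kos(\tilde R_K,F_1,\ldots,F_{n+1})_d$ is exact iff the $F_i$ have no common zero in $\P^n_{\ov K}$, which by the defining property of the resultant is iff $\Res_{d_1,\ldots,d_{n+1}}(F_1,\ldots,F_{n+1})\neq 0$.

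The principal obstacle is the degree comparison in (1): one must execute the Euler-characteristic/Hilbert-function computation cleanly enough to identify $\prod_{\ell\neq i}d_\ell$ without invoking the resultant circularly, and one must control sign and normalization conventions in Cayley's construction so that the equality holds up to a sign rather than up to a rational constant. Once this is in place, (2) and (3) are essentially formal consequences.
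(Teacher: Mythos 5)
First, a point of reference: the paper does not actually prove Theorem \ref{chom}; it is stated as a classical result with the proof delegated to \cite{cha93} and to Theorem 5.1 of \cite{DD01}. So your proposal can only be measured against the standard argument, which is also the template the authors follow when they prove the sparse analogue (Theorem \ref{csparse}) in Section \ref{ultt}. Your overall architecture --- exactness of the generic complex via Theorem \ref{g0}, identification of the Cayley determinant $\Delta$ with $\Res_{d_1,\ldots,d_{n+1}}$ by comparing vanishing loci and multidegrees, then specialization for part (3) --- is the right one, and the Euler--characteristic computation of the multidegree (collapsing to the B\'ezout number $\prod_{\ell\neq i}d_\ell$) is indeed how one avoids circularity.

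The genuine gap is the sentence asserting that ``$\Delta$ vanishes after a specialization \dots precisely when the specialized Koszul complex ceases to be exact.'' This is not automatic: $\Delta$ is a priori only an alternating product $\prod_i\Delta_i^{(-1)^{i+1}}$ of chosen minors, and at a point where the specialized complex fails to be exact the numerators and denominators may vanish simultaneously, so nothing can be read off the formula. The statement you actually need is Theorem \ref{mult} (GKZ, Appendix A, Theorem 34): up to a unit, $\Delta$ equals the $\gcd$ of the maximal minors of $\Psi_1$, \emph{provided} the higher cohomology modules of the complex over the polynomial ring in the $c_{i,\alpha}$ have multiplicity zero along every height-one prime. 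Verifying that hypothesis is the real content of the proof; in the sparse case the authors do it in Propositions \ref{lab} and \ref{gcd} by exhibiting, for each irreducible $p$, square submatrices $M_j$, $j\geq 2$, whose determinants cannot be divisible by $p$ because they avoid the coefficients of one of the polynomials. Consequently your logical order should be reversed: divisibility of every maximal minor of $\Psi_1$ by $\Delta$ --- essentially part (2) --- must be established first, and only then does ``common zero $\Rightarrow$ $\Psi_1$ not onto $\Rightarrow$ all maximal minors vanish $\Rightarrow$ $\Delta$ vanishes'' give $\Res_{d_1,\ldots,d_{n+1}}\mid\Delta$; your ``clearing denominators'' sketch for (2) assumes what is to be proved. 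A second, smaller gap: the degree comparison only yields $\Delta=\lambda\,\Res_{d_1,\ldots,d_{n+1}}$ with $\lambda\in\Q^\times$. To get $\lambda=\pm1$, and to get the $\gcd$ statement in $\Z[c_{i,\alpha}]$ rather than $\Q[c_{i,\alpha}]$, you must exhibit a maximal minor not divisible by any prime integer --- for instance by specializing to $F_i=t_{i-1}^{d_i}$, where the relevant minor is $\pm1$, in the spirit of Proposition \ref{rmt}. You flag this as an obstacle, but it is a required step, not an optional refinement.
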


In this paper, we are going to present extensions of Theorems \ref{g0} and \ref{chom} to the sparse setting.
To do this, we start with finite sets $\cA_1,\ldots, \cA_{k}\subset\Z^n.$  For a field $K$ of characteristc zero, and  $i=1,\ldots, k,$ consider polynomials
\begin{equation}\label{fi}
f_i:=\sum_{b\in\cA_i} f_{i,b} t^b \in K[t_1^{\pm1},\ldots, t_n^{\pm1}],
\end{equation}
with $f_{i,b}\in K^*:=K\setminus\{0\}.$ We say that $\cA_i$ is the {\em support} of $f_i, \ i=1,\ldots, k.$ For short we denote with $K[t^{\pm1}]$ the ring of Laurent polynomials $K[t_1^{\pm1},\ldots, t_n^{\pm1}].$ The torus $(\overline{K}^*)^n$ will be denoted with $\T_{\overline{K}}^n.$ It is the natural environment to look for common zeroes of  \eqref{fi}.

Set $P_i:=\mbox{conv}(\cA_i)\subset\R^n$ for the convex hull of $\cA_i,\, i=1,\ldots, k.$  It is an {\em integral} polytope, meaning that all its vertices are in $\Z^n.$ We further set
\begin{equation}\label{pp}
P:=P_1+\ldots + P_k,
\end{equation}
where the operation in the right hand side is the Minkowski sum of polytopes, as defined in \cite[Chapter 7]{CLO05}.
If $P$ is $n$-dimensional, there exists a \textit{toric variety} $X_P$ containing $\T_{\overline{K}}^n$ as a dense subset, and a polynomial ring $S$ having as many variables as facets of $P,$ and such that  $f_1,\ldots, f_k$ get ``homogenized'' to $F_1,\ldots, F_k$, elements of a polynomial ring $S$ to be described later.  Homogeneous here means with respect to the grading given by the Chow group of $X_P$  (see \eqref{eFFe} to get the homogenization formula) which satisfies that $\xi\in\T_{\overline{K}}^n$ is a common zero of the $f_i$'s if and only if it is  a common zero of the $F_i$'s.

To illustrate our results and help the reader digest the technical background, we consider the following example which we will carry out all along the paper.
\begin{example}\label{running}
{\em 
 Set $n=2$, and $k=3.$ We will simplify the subindexes to avoid an overuse of notation. Let 
$$
f_1:=f_{10}+f_{11} t_1t_2, \ \ f_2:=f_{20}+f_{21}t_1^2t_2+f_{22}t_1t_2^2, \ \ f_3:=f_{30}+f_{31}t_1^2t_2+f_{32}t_2.
$$
In this case, we have
$$\cA_1=\{(0,0),\, (1,1)\},\ \  \cA_2=\{(0,0),\,(2,1),\,(1,2)\}, \ \ \cA_3=\{(0,0),\, (2,1),\,(0,1)\},
$$
and their convex hulls $P_1,\,P_2$ and $P_3$ are a segment and two triangles in $\R^2,$ respectively.  The Minkowski sum $P=P_1+P_2+P_3$ is the heptagon in $\R^2$ with vertices \newline $(0,0),\,(0,1),\,(1,3),\,(2,4),\,(4,4),\,(5,3),$ and $(4,2)$, see Figure \ref{fig:supports}.
}

\end{example}

\begin{figure}[ht]
\begin{tikzpicture}
\draw[->, color= darkgray] (-0.5,0) -- (2.5,0);
\draw[->, color= darkgray] (0,-0.5) -- (0,2.5);
 \filldraw[color=red] (0,0) circle (2pt);
 \filldraw[color=red] (1,1) circle (2pt);
 \draw[thick] (0,0)--(1,1);
 \draw[color=red] (0.5,-0.3) node {$\cA_1$};
 \draw (0.3,0.5) node[above] {$P_1$};
\end{tikzpicture} \hspace{5mm}
\begin{tikzpicture}
\draw[->, color= darkgray] (-0.5,0) -- (2.5,0);
\draw[->, color= darkgray] (0,-0.5) -- (0,2.5);
 \filldraw[color=green!50!black] (0,0) circle (2pt);
 \filldraw[color=green!50!black] (2,1) circle (2pt);
 \filldraw[color=green!50!black] (1,2) circle (2pt);
 \filldraw[color=gray!30] (0,0)--(2,1)--(1,2)--(0,0);
 \draw[very thick] (0,0)--(2,1)--(1,2)--(0,0);
  \draw[color=green!50!black] (1,-0.3) node {$\cA_2$}; 
 \draw (1.5,1.5) node[above right] {$P_2$};
\end{tikzpicture} \hspace{5mm}
\begin{tikzpicture}
\draw[->, color= darkgray] (-0.5,0) -- (2.5,0);
\draw[->, color= darkgray] (0,-0.5) -- (0,2.5);
 \filldraw[color=green!50!black] (0,0) circle (2pt);
 \filldraw[color=green!50!black] (2,1) circle (2pt);
 \filldraw[color=green!50!black] (0,1) circle (2pt);
 \filldraw[color=gray!30] (0,0)--(2,1)--(0,1)--(0,0);
 \draw[very thick] (0,0)--(2,1)--(0,1)--(0,0);
  \draw[color=green!50!black] (1,-0.3) node {$\cA_3$};
 \draw (1,1.5) node {$P_3$};
\end{tikzpicture}

\bigskip
\begin{tikzpicture}
\draw[->, color= darkgray] (-0.5,0) -- (5.5,0);
\draw[->, color= darkgray] (0,-0.5) -- (0,4.5);
\filldraw (0,0) circle (2pt);
\filldraw (0,1) circle (2pt);
\filldraw (1,3) circle (2pt);
\filldraw (2,4) circle (2pt);
\filldraw (4,4) circle (2pt);
\filldraw (5,3) circle (2pt);
\filldraw (4,2) circle (2pt);
\filldraw[color=gray!30] (0,0)--(0,1)--(1,3)--(2,4)--(4,4)--(5,3)--(4,2)--(0,0);
\draw[very thick] (0,0)--(0,1)--(1,3)--(2,4)--(4,4)--(5,3)--(4,2)--(0,0);
\draw (3,0.5) node {$P=P_1+P_2+P_3$};
\end{tikzpicture}

   \caption{Supports, convex hulls and Miknowski sum of the polynomials in Example  \ref{running}.} \label{fig:supports}
\end{figure}

To deal with a more general situation, assume that we are given another integral polytope $Q\subset\R^n.$ Associated to the Minkowski sum $P+Q$ there is a toric variety $X_{P+Q}$ ``lying over'' $X_P,$ with its corresponding homogeneous coordinate ring $S'$ and hence $f_1,\ldots, f_k$ can also be homogenized to $F_1',\ldots, F_k'\in S'.$ When $Q$ is a single point, we recover the variety $X_P$ so this case covers the previous one and extends to a more general setup.

For each of the facets of $P+Q$ there is an invariant Weil divisor associated to it. We will denote by $D_1,\ldots, D_{n+r}$ these divisors.
As in Canny and Emiris' matrix construction for the resultant (cf.~\cite{CE93}), we fix a vector $\delta\in\Q^n.$ Associated to it we define the divisor $D_\delta:=\sum_j -D_j$, the sum being over all the divisors associated to facets whose inner normal $\eta_j$ satisfies $\langle \eta_j,\delta\rangle>0.$ If there are no such facets, then $D_\delta$ is set equal to zero.
Let $\alpha_\delta$ be the degree of $D_\delta.$   Each of the $F'_i$  has a degree $\alpha_{P_i}$ associated to its Newton polytope $P_i.$
Set $\alpha_P=\sum_{i=1}^k \alpha_{P_i},$ and $\alpha_Q$ the degree associated to the polytope $Q$ in $X_{P+Q}.$

As in the homogeneous case, we can consider the Koszul complex $Kos(S',F_1',\ldots, F_k'),$ and its graded pieces determined by the Chow group of $X_{P+Q}.$
The following is our generalization of Theorem \ref{g0} to the sparse setting:
\begin{theorem}\label{g1}
Let $K$ be a field of characteristic zero,  $\cA_1,\ldots, \cA_{k}\subset\Z^n$ finite sets, $f_1,\ldots, f_k$ as in \eqref{fi}, and $P$ as in \eqref{pp}. If $P$ is an $n$-dimensional polytope,  the following are equivalent:
\begin{enumerate}
\item there are no common zeroes of $F_1,\ldots, F_k$ in $X_P,$ where $F_i$ is the homogenization of $f_i$ explained above and defined properly in \eqref{eFFe};
\item $Kos(S',F_1',\ldots, F_k')_{\alpha_\delta+\alpha_P+\alpha_Q}$ is exact as a complex of $K$-vector spaces;
\item the rightmost nontrivial map of $Kos(S',F_1',\ldots, F_k')_{\alpha_\delta+\alpha_P+\alpha_Q}$ is onto.
\end{enumerate}
\end{theorem}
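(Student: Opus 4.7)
I will argue cyclically $(2)\Rightarrow(3)\Rightarrow(1)\Rightarrow(2)$. The implication $(2)\Rightarrow(3)$ is tautological, being the definition of exactness at the rightmost position of the complex.

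For $(3)\Rightarrow(1)$ I would proceed by contradiction: assume the rightmost map $\Psi_1$ is surjective in degree $\alpha_\delta+\alpha_P+\alpha_Q$ and that $\xi\in X_P$ is a common zero of $F_1,\ldots,F_k$. The torus-part of $\xi$ lifts canonically to a point $\tilde\xi\in X_{P+Q}$ (and on the boundary one may choose a lift into a torus orbit that avoids the relevant facet divisors). The role of the shift by $\alpha_\delta$ is precisely to guarantee the existence of a monomial $M\in S'$ of degree $\alpha_\delta+\alpha_P+\alpha_Q$ with $M(\tilde\xi)\neq 0$. By hypothesis $M=\sum_i G_i F_i'$; evaluating at $\tilde\xi$ yields $M(\tilde\xi)=0$, a contradiction.

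The substantive implication is $(1)\Rightarrow(2)$. I would sheafify the Koszul complex on $X_{P+Q}$ to a complex of line bundles
\begin{equation*}
0 \to \cO_{X_{P+Q}}(-D_{F_1'}-\cdots-D_{F_k'}) \to \cdots \to \bigoplus_{i=1}^k \cO_{X_{P+Q}}(-D_{F_i'}) \to \cO_{X_{P+Q}} \to 0,
\end{equation*}
where $D_{F_i'}$ is the effective Cartier divisor of $F_i'$. Hypothesis $(1)$, transported to $X_{P+Q}$ via the natural toric morphism, says that the common zero locus of the $F_i'$ in $X_{P+Q}$ meets the open torus trivially and is contained in the preimage of the boundary of $X_P$, so the sheafified complex resolves a sheaf supported on that boundary. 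Twisting by $\cO(\alpha_\delta+\alpha_P+\alpha_Q)$ and taking hypercohomology, I would run the standard spectral sequence whose $E_1$-page involves the groups $H^p(X_{P+Q},\cO(-D_J)(\alpha_\delta+\alpha_P+\alpha_Q))$ for $p\ge 0$ and $J\subset\{1,\ldots,k\}$. Assuming all higher cohomologies $(p\ge 1)$ vanish, the spectral sequence degenerates and the cohomology of the graded piece of the global Koszul complex agrees with the hypercohomology of the resolved sheaf; that hypercohomology in turn vanishes in our degree by the choice of shift, which yields the exactness in $(2)$.

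The principal obstacle is the cohomological vanishing on the $E_1$-page: for every subset $J\subset\{1,\ldots,k\}$ and every $p\ge 1$ one must show
\begin{equation*}
H^p\bigl(X_{P+Q},\,\cO(-D_J)(\alpha_\delta+\alpha_P+\alpha_Q)\bigr)=0.
\end{equation*}
By the standard formula for cohomology of torus-invariant divisors on a toric variety, this reduces to a combinatorial statement about the absence of lattice points in certain Minkowski differences built from $P$, $Q$, the $P_i$, and the facet translates encoded by $\delta$. The Canny--Emiris vector $\delta$ is introduced precisely to shift these polytopes so that the relevant lattice-point contributions disappear. Verifying this vanishing uniformly in $J$ and $p$ --- especially since $X_{P+Q}$ need not be smooth and Demazure vanishing cannot be invoked as a black box --- is where the bulk of the work lies, and it is the sparse analogue of the sharp bound $d\geq d_1+\ldots+d_k-n$ from Theorem \ref{g0}.
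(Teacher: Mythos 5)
Your overall architecture matches the paper's ($(1)\Rightarrow(2)$ by sheafifying the Koszul complex, twisting, and killing higher cohomology; $(2)\Rightarrow(3)$ trivial), but there are two genuine gaps. First, your $(3)\Rightarrow(1)$ evaluation argument only works when the common zero lies in the torus. Hypothesis (1) concerns zeros anywhere in $X_P$, including the boundary, and for a boundary zero $\xi$ some homogeneous coordinate $x_j$ vanishes at every representative of $\xi$; you cannot ``choose a lift into a torus orbit that avoids the relevant facet divisors,'' since the orbit is determined by $\xi$. Moreover, because $D_\delta$ is anti-effective, the degree $\alpha_\delta+\alpha_P+\alpha_Q$ is in general \emph{not} basepoint free: the polytope $P+Q+\delta$ may miss the facet with normal $\eta_j$, so every monomial of that degree is divisible by $x_j$ and $M(\tilde\xi)=0$ automatically, producing no contradiction. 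The paper circumvents this with a specialization argument in the space of coefficients: systems with a common zero in $\T^{n+r}_K$ lie in the closed locus where all maximal minors of $\Psi_1$ vanish, and the Zariski closure of that family contains every system with a common zero anywhere in $X_{P+Q}$; since non-surjectivity is a closed condition, (3) fails for all of them. Some such density argument is needed; your direct evaluation does not suffice.

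Second, in $(1)\Rightarrow(2)$ you explicitly leave the cohomological vanishing
$H^p\bigl(X_{P+Q},\cO_{X_{P+Q}}(D_\delta+D_Q+\sum_{j\notin J}D_{P_j})\bigr)=0$ for $p\geq 1$ as ``where the bulk of the work lies,'' but this is precisely the substantive content of the implication, so the proof is incomplete without it. The paper's Proposition \ref{must} establishes it not by a lattice-point count but by exhibiting the divisor as $\lceil D\rceil+\sum_{j}-D_j$ for a $\Q$-Cartier $\Q$-ample divisor $D$ (a small ample perturbation $\frac1c(D_P+D_Q)$ added to the semi-ample part, with the representative of $D_P+D_Q$ translated so that the round-up produces exactly $D_\delta$), and then invoking Musta\c{t}\u{a}'s toric Kawamata--Viehweg vanishing, which applies on singular complete toric varieties. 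Separately, you misread hypothesis (1): it asserts the $F_i$ have no common zero in all of $X_P$ (equivalently the $F_i'$ have none in $X_{P+Q}$), so the Koszul complex of sheaves is exact outright rather than resolving a sheaf supported on the boundary; if it only resolved a boundary-supported sheaf, your claim that the hypercohomology vanishes ``by the choice of shift'' would itself be an unproven assertion, and the graded piece would in general fail to be exact at the rightmost spot.
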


\begin{remark}
The hypothesis of $P$ being $n$-dimensional can be dropped by the simple observation that if the dimension of this polytope is smaller, then one can reparametrize \eqref{fi} with fewer than $n$ variables.
\end{remark}

From Theorem \ref{g1} we derive the following sharp sparse Nullstellensatz for polynomials without zeroes in $X_P.$
\begin{theorem}\label{spr}
Let $K$ be a field of characteristic zero,  $\cA_1,\ldots, \cA_{k}\subset\Z^n$ finite sets, $f_1,\ldots, f_k$ as in \eqref{fi}, and $P$ as in \eqref{pp}. If $\delta\in\big((-1,1)\cap\Q\big)^n,$ and $P$ is an $n$-dimensional polytope, 
for any integral polytope $Q$, the following are equivalent:
\begin{enumerate}
\item there are no common zeroes of $F_1,\ldots, F_k$ in $X_P;$
\item for all $g\in K[t^{\pm1}]$ supported in $(P+Q+\delta)\cap\Z^n,$ there exist $g_1,\ldots, g_k\in K[t^{\pm1}]$ supported in $(P_1+\ldots +P_{i-1}+P_{i+1}+\ldots+P_k+Q+\delta)\cap\Z^n,\,1\leq i\leq k$ respectively, such that
\begin{equation}\label{coz}
g=g_1f_1+\ldots+g_kf_k.
\end{equation}
\end{enumerate}
\end{theorem}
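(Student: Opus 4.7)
I would derive Theorem~\ref{spr} directly from Theorem~\ref{g1} by translating cohomological surjectivity into an explicit Laurent-polynomial statement with prescribed supports. By Theorem~\ref{g1}, condition~(1) is equivalent to surjectivity of the rightmost nontrivial map of the Koszul complex in degree $\alpha_\delta+\alpha_P+\alpha_Q$, namely
\[
\Psi_1\colon\;\bigoplus_{i=1}^{k}S'_{\alpha_\delta+\alpha_P+\alpha_Q-\alpha_{P_i}}\longrightarrow S'_{\alpha_\delta+\alpha_P+\alpha_Q},\qquad (G_1,\ldots,G_k)\longmapsto\sum_{i=1}^{k}G_iF'_i.
\]
The goal is then to identify this $K$-linear map with the Laurent-polynomial map $(g_1,\ldots,g_k)\mapsto\sum g_if_i$ whose surjectivity is exactly the content of~(2); this reduces Theorem~\ref{spr} to an identification of graded pieces.

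\textbf{Key identification.} For any integral polytope $R$, the global sections of the torus-invariant divisor $D_\delta+D_R$ on $X_{P+Q}$ have a monomial basis indexed by lattice points of the rational polytope $P_{D_\delta+D_R}$. Under the assumption $\delta\in((-1,1)\cap\Q)^n$, I would establish
\[
S'_{\alpha_\delta+\alpha_R}\;\cong\;\bigoplus_{a\in (R+\delta)\cap\Z^n}K\cdot t^a,
\]
by the natural dehomogenization (restriction to the dense torus), compatibly with the correspondence $F'_i\leftrightarrow f_i$. Applying this identification to $R=P+Q$ in the target and to $R=P_1+\cdots+\widehat{P_i}+\cdots+P_k+Q$ in each summand of the source (note that $\alpha_R-\alpha_{P_i}$ does correspond to removing $P_i$ from the Minkowski sum), the map $\Psi_1$ becomes the multiplication-and-sum map with support constraints exactly as in~(2). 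Its surjectivity is then literally (2), and both implications of the theorem follow.

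\textbf{Main obstacle.} The crux of the argument is the key identification above: one must verify that, for $\delta$ in the prescribed range, the polytope $P_{D_\delta+D_R}$ captures exactly the integer points of the translate $R+\delta$. Concretely, the $-1$ coefficient assigned to each facet with $\langle\eta_j,\delta\rangle>0$ in the definition of $D_\delta$ must match the ceiling rounding $\lceil\langle\eta_j,\delta\rangle\rceil$ that governs lattice points of the $\delta$-translate of $R$, uniformly across the family of Minkowski sums appearing in the source and target of $\Psi_1$. This combinatorial matching is where the hypothesis on $\delta$ is used essentially, and it relies on a careful analysis of the primitive inner normals $\eta_j$ of the common (outer) fan refining those of $P+Q$ and its sub-Minkowski sums. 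Once this accounting is in place, the equivalence (1)$\Leftrightarrow$(2) follows formally by unwinding definitions through Theorem~\ref{g1}; no separate argument is needed for either direction.
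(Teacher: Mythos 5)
Your proposal is correct and follows the paper's own proof essentially verbatim: the paper likewise deduces the statement from the equivalence $(1)\iff(3)$ of Theorem \ref{g1} by identifying, via \eqref{kor} and the hypothesis on $\delta$, the monomial bases of $S'_{\alpha_\delta+\alpha_Q+\alpha_P}$ and of each summand $S'_{\alpha_\delta+\alpha_Q+\alpha_P-\alpha_{P_i}}$ with the lattice points of $P+Q+\delta$ and of $P_1+\cdots+\widehat{P_i}+\cdots+P_k+Q+\delta$ respectively, and then dehomogenizes the rightmost map $\Psi_1$ to the torus via \eqref{tx}. The combinatorial matching you single out as the main obstacle --- that the coefficient $-1$ placed on each facet with $\langle\eta_j,\delta\rangle>0$ in $D_\delta$ must reproduce the ceiling $\lceil\langle\eta_j,\delta\rangle\rceil$ governing the lattice points of the $\delta$-translates --- is exactly the content of the first sentence of the paper's proof, which asserts it from the hypothesis on $\delta$ without further elaboration.
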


Note that if $\delta\notin\Z^n,$  then for any integral polytope $R$ we have that $\#(R+\delta)\cap\Z^n<\#R\cap\Z^n,$ so the addition of $\delta$ above actually shrinks the supports in the Bezout identity \eqref{coz}.

There are several ``sparse Nullstellensatze without zeroes in $X_P$'' in the literature with a similar flavour than Theorem \ref{spr}.  To mention some,  Theorem $3$ in \cite{CDV06}  presents a Nullstellensatz in the same conditions for $k=n+1,$ all the $P_i$'s being equal to the same $n$-dimensional polytope, $Q=\{\bf0\},$ and $\delta=(0,\ldots, 0).$
With different motivation and techniques,  Theorem $1.2$ in \cite{wul11} deals with the case of polynomials in $\C[t_1,\ldots, t_n]$  having the proper codimension in the toric variety and no components at infinity. Their result holds for a ``large poltyope''  $P_0$  containing all the vectors of the standard basis of $\Z^n,$ and the support of each of the $f_i$'s. So, in general larger than our $P$.

The closest to our main result is Theorem $1.3$ in \cite{tui11}, where the claim is proven with our hypothesis for the case  $Q=\{\bf0\}$ and  $\delta=(0,\ldots, 0).$ Theorem \ref{spr} above represents an improvement over this result. For instance, in our running Example \ref{running},  the polynomials $g_1, \, g_2,\, g_3$ in \eqref{coz} have supports of cardinality $11,\, 7,\,7$  respectively if one uses Tuitman's results. 
From our results above, these cardinalities can shrink to $7, 5 ,4$ if $\delta=(\frac12,\frac12)$, and $8,\,4,\,4$  if $\delta=(0,-\frac12),$ see below.

Moreover, the complex  $Kos(S',F_1',\ldots, F_k')_{\alpha_\delta+\alpha_P+\alpha_Q}$ from Theorem \ref{g1} has the following shape for $\delta=(0, 0),$ and $Q=\{\bf0\}$ (this follows also  from Tuitman's, see Figure \ref{00}):
\begin{equation}\label{gg0}
0\to K \to K^2\oplus K^4\oplus K^4\to K^{11}\oplus K^7\oplus K^7\to K^{16}\to0.
\end{equation}

\begin{figure}[ht]\label{00}
\begin{tikzpicture}[scale=0.8]
\draw[->, color= darkgray] (-0.5,0) -- (6,0);
\draw[->, color= darkgray] (0,-0.5) -- (0,5);
\filldraw[color=gray!30] (0,0)--(0,1)--(1,3)--(2,4)--(4,4)--(5,3)--(4,2)--(0,0);
\draw[very thick] (0,0)--(0,1)--(1,3)--(2,4)--(4,4)--(5,3)--(4,2)--(0,0);
\draw (3.5,0.5) node {$P=P_1+P_2+P_3$};
\filldraw (0,0) circle (2pt);
\filldraw (0,1) circle (2pt);
\filldraw (1,3) circle (2pt);
\filldraw (2,4) circle (2pt);
\filldraw (4,4) circle (2pt);
\filldraw (5,3) circle (2pt);
\filldraw (4,2) circle (2pt);
\foreach \x in {2,3,4} {
            \foreach \y in {2,3,4} {
            \filldraw[color=black](\x,\y) circle (2pt);
            }}
\filldraw[color=black](1,1) circle (2pt);
\filldraw[color=black](1,2) circle (2pt);
\filldraw[color=black](2,1) circle (2pt);
\end{tikzpicture}

\bigskip
\begin{tikzpicture}[scale=0.8]
\draw[->, color= darkgray] (-0.5,0) -- (5,0);
\draw[->, color= darkgray] (0,-0.5) -- (0,4);
\filldraw[color=gray!30] (0,0)--(0,1)--(1,3)--(3,3)--(4,2)--(0,0);
\draw[very thick] (0,0)--(0,1)--(1,3)--(3,3)--(4,2)--(0,0);
\foreach \position in {(0,0),(0,1),(1,1), (1,2),(1,3),(2,1), (2,2),(2,3), (3,2), (3,3),(4,2)} 
\filldraw \position circle (2pt);
\draw (2,3.2) node[above] {$P_2+P_3$};
\end{tikzpicture}\hspace{5mm}
\begin{tikzpicture}[scale=0.8]
\draw[->, color= darkgray] (-0.5,0) -- (4,0);
\draw[->, color= darkgray] (0,-0.5) -- (0,4);
\filldraw[color=gray!30] (0,0)--(2,1)--(3,2)--(2,3)--(1,2)--(0,0);
\draw[very thick] (0,0)--(2,1)--(3,2)--(2,3)--(1,2)--(0,0);
\foreach \position in {(0,0),(1,1),(1,2),(2,1),(2,2),(2,3), (3,2)} 
\filldraw \position circle (2pt);
\draw (2,3.2) node[above] {$P_1+P_3$};
\end{tikzpicture}\hspace{5mm}
\begin{tikzpicture}[scale=0.8]
\draw[->, color= darkgray] (-0.5,0) -- (4,0);
\draw[->, color= darkgray] (0,-0.5) -- (0,4);
\filldraw[color=gray!30] (0,0)--(2,1)--(3,2)--(1,2)--(0,1)--(0,0);
\draw[very thick] (0,0)--(2,1)--(3,2)--(1,2)--(0,1)--(0,0);
\foreach \position in {(0,0),(0,1),(1,1),(1,2),(2,1),(2,2),(3,2)} 
\filldraw \position circle (2pt);
\draw (2,3.2) node {$P_1+P_2$};
\end{tikzpicture}
\end{figure}

Theorem \ref{g1} above shrinks the size of this complex considerably. For instance we have for $\delta=(\frac12,\frac12)$ and $Q=\{\bf0\},$ that  $Kos(S',F_1',\ldots, F_k')_{\alpha_\delta+\alpha_P+\alpha_Q}$ is isomorphic to the following complex of $K$-vector spaces,  see Figure \ref{fig00}:
\begin{equation}\label{gg1}
0\to K\oplus K^2\oplus K\to K^7\oplus K^5\oplus K^4\to K^{12}\to0,
\end{equation}

\begin{figure}[ht]
\begin{tikzpicture}[scale=0.8]
\draw[->, color= darkgray] (-0.5,0) -- (6,0);
\draw[->, color= darkgray] (0,-0.5) -- (0,5);
\filldraw[shift={(0.5, 0.5)}, color=gray!30] (0,0)--(0,1)--(1,3)--(2,4)--(4,4)--(5,3)--(4,2)--(0,0);
\draw[shift={(0.5, 0.5)}, very thick] (0,0)--(0,1)--(1,3)--(2,4)--(4,4)--(5,3)--(4,2)--(0,0);
\draw (3,0.5) node {$P+(\frac12, \frac12)$};
  \foreach \x in {2,3} {
            \foreach \y in {2,3,4} {
            \filldraw[color=black](\x,\y) circle (2pt);
            }}
\foreach \position in {(1,1), (1,2), (4,3), (4,4),(5,3),(5,4)} 
\filldraw \position circle (2pt);
\end{tikzpicture}

\medskip
\begin{tikzpicture}[scale=0.8]
\draw[->, color= darkgray] (-0.5,0) -- (5,0);
\draw[->, color= darkgray] (0,-0.5) -- (0,4);
\filldraw[shift={(0.5, 0.5)}, color=gray!30] (0,0)--(0,1)--(1,3)--(3,3)--(4,2)--(0,0);
\draw[shift={(0.5, 0.5)}, very thick] (0,0)--(0,1)--(1,3)--(3,3)--(4,2)--(0,0);
\foreach \position in {(1,1), (1,2),(2,2),(2,3), (3,2), (3,3),(4,3)} 
\filldraw \position circle (2pt);
\draw (1.5,0.5) node[right] {$P_2+P_3+(\frac12, \frac12)$};
\end{tikzpicture}\hspace{5mm}
\begin{tikzpicture}[scale=0.8]
\draw[->, color= darkgray] (-0.5,0) -- (4,0);
\draw[->, color= darkgray] (0,-0.5) -- (0,4);
\filldraw[shift={(0.5, 0.5)},color=gray!30] (0,0)--(2,1)--(3,2)--(2,3)--(1,2)--(0,0);
\draw[shift={(0.5, 0.5)},very thick] (0,0)--(2,1)--(3,2)--(2,3)--(1,2)--(0,0);
\foreach \position in {(1,1),(2,2),(2,3), (3,2),(3,3)} 
\filldraw \position circle (2pt);
\draw (1,0.5) node[right] {$P_1+P_3+(\frac12, \frac12)$};
\end{tikzpicture}\hspace{5mm}
\begin{tikzpicture}[scale=0.8]
\draw[->, color= darkgray] (-0.5,0) -- (4,0);
\draw[->, color= darkgray] (0,-0.5) -- (0,4);
\filldraw[shift={(0.5, 0.5)},color=gray!30] (0,0)--(2,1)--(3,2)--(1,2)--(0,1)--(0,0);
\draw[shift={(0.5, 0.5)},very thick] (0,0)--(2,1)--(3,2)--(1,2)--(0,1)--(0,0);
\foreach \position in {(1,1),(1,2),(2,2),(3,2)} 
\filldraw \position circle (2pt);
\draw (1,0.5) node[right] {$P_1+P_2+(\frac12,\frac12)$};
\end{tikzpicture}

\medskip
\begin{tikzpicture}[scale=0.8]
\draw[->, color= darkgray] (-0.5,0) -- (2.5,0);
\draw[->, color= darkgray] (0,-0.5) -- (0,2.5);
 \draw[shift={(0.5, 0.5)},thick] (0,0)--(1,1);
 \filldraw (1,1) circle (2pt);
 \draw (0.4,0.3) node[right] {$P_1+(\frac12, \frac12)$};
\end{tikzpicture} \hspace{5mm}
\begin{tikzpicture}[scale=0.8]
\draw[->, color= darkgray] (-0.5,0) -- (2.5,0);
\draw[->, color= darkgray] (0,-0.5) -- (0,2.5);
 \filldraw[shift={(0.5, 0.5)},color=gray!30] (0,0)--(2,1)--(1,2)--(0,0);
 \draw[shift={(0.5, 0.5)},very thick] (0,0)--(2,1)--(1,2)--(0,0);
\foreach \position in {(1,1),(2,2)} 
\filldraw \position circle (2pt);
\draw (0.5,0.35) node[right] {$P_2+(\frac12,\frac12)$};
\end{tikzpicture} \hspace{5mm}
\begin{tikzpicture}[scale=0.8]
\draw[->, color= darkgray] (-0.5,0) -- (3,0);
\draw[->, color= darkgray] (0,-0.5) -- (0,2.5);
 \filldraw[shift={(0.5, 0.5)},color=gray!30] (0,0)--(2,1)--(0,1)--(0,0);
 \draw[shift={(0.5, 0.5)},very thick] (0,0)--(2,1)--(0,1)--(0,0);
\foreach \position in {(1,1)} 
\filldraw \position circle (2pt);
 \draw (2,0.35) node {$P_3+(\frac12, \frac12)$};
\end{tikzpicture}
   \caption{Supports of the monomials appearing in Example  \ref{running} with $\delta=(\frac12,\frac12)$.}  \label{fig00}

\end{figure}
while for $\delta=(0,-\frac12),$ we obtain (see Figure \ref{fig01}):
\begin{equation}\label{gg2}
0\to K^2\oplus K^2\to K^8\oplus K^4\oplus K^4\to K^{12}\to0.
\end{equation}

In all these complexes \eqref{gg0},\, \eqref{gg1},\, and \eqref{gg2},  the maps are of ``Koszul type'' as defined in \eqref{axi}. For instance, the rightmost nontrivial map in the three of them corresponds to an application of the form $(g_1, g_2, g_3)_\mapsto g_1\,f_1+g_2\,f_2+g_3\,f_3.$

\begin{figure}[ht]
\begin{tikzpicture}[scale=0.8]
\filldraw[shift={(0, -0.5)}, color=gray!30] (0,0)--(0,1)--(1,3)--(2,4)--(4,4)--(5,3)--(4,2)--(0,0);
\draw[shift={(0, -0.5)}, very thick] (0,0)--(0,1)--(1,3)--(2,4)--(4,4)--(5,3)--(4,2)--(0,0);
\draw (4,0.5) node {$P+(0, -\frac12)$};
\draw[->, color= darkgray] (-0.5,0) -- (6,0);
\draw[->, color= darkgray] (0,-0.5) -- (0,5);
 \foreach \x in {2,3} {
            \foreach \y in {1,2,3} {
            \filldraw[color=black](\x,\y) circle (2pt);
            }}
\foreach \position in {(0,0),(1,0),(1,1), (1,2), (4,2),(4,3)} 
\filldraw \position circle (2pt);
\end{tikzpicture}

\medskip
\begin{tikzpicture}[scale=0.8]
\filldraw[shift={(0, -0.5)}, color=gray!30] (0,0)--(0,1)--(1,3)--(3,3)--(4,2)--(0,0);
\draw[shift={(0, -0.5)}, very thick] (0,0)--(0,1)--(1,3)--(3,3)--(4,2)--(0,0);
\draw[->, color= darkgray] (-0.5,0) -- (5,0);
\draw[->, color= darkgray] (0,-0.5) -- (0,4);
\foreach \position in {(0,0),(1,0), (1,1), (1,2),(2,1),(2,2), (3,1),(3,2)} 
\filldraw \position circle (2pt);
\draw (1,3) node[right] {$P_2+P_3+(0, -\frac12)$};
\end{tikzpicture}\hspace{5mm}
\begin{tikzpicture}[scale=0.8]
\filldraw[shift={(0,-0.5)},color=gray!30] (0,0)--(2,1)--(3,2)--(2,3)--(1,2)--(0,0);
\draw[shift={(0, -0.5)},very thick] (0,0)--(2,1)--(3,2)--(2,3)--(1,2)--(0,0);
\draw[->, color= darkgray] (-0.5,0) -- (4,0);
\draw[->, color= darkgray] (0,-0.5) -- (0,4);
\foreach \position in {(1,0),(1,1),(2,1),(2,2)} 
\filldraw \position circle (2pt);
\draw (0.2,3) node[right] {$P_1+P_3+(0,-\frac12)$};
\end{tikzpicture}\hspace{5mm}
\begin{tikzpicture}[scale=0.8]
\filldraw[shift={(0, -0.5)},color=gray!30] (0,0)--(2,1)--(3,2)--(1,2)--(0,1)--(0,0);
\draw[shift={(0, -0.5)},very thick] (0,0)--(2,1)--(3,2)--(1,2)--(0,1)--(0,0);
\draw[->, color= darkgray] (-0.5,0) -- (4,0);
\draw[->, color= darkgray] (0,-0.5) -- (0,4);
\foreach \position in {(0,0),(1,0),(1,1),(2,1)} 
\filldraw \position circle (2pt);
\draw (0.2,3) node[right] {$P_1+P_2+(0,-\frac12)$};
\end{tikzpicture}

\medskip
\begin{tikzpicture}[scale=0.8]
\draw[->, color= darkgray] (-0.5,0) -- (2.5,0);
\draw[->, color= darkgray] (0,-0.5) -- (0,2.5);
 \draw[shift={(0,-0.5)},thick] (0,0)--(1,1);
 \draw (0.1,1) node[right] {$P_1+(0,- \frac12)$};
\end{tikzpicture} \hspace{5mm}
\begin{tikzpicture}[scale=0.8]
 \filldraw[shift={(0, -0.5)},color=gray!30] (0,0)--(2,1)--(1,2)--(0,0);
 \draw[shift={(0, -0.5)},very thick] (0,0)--(2,1)--(1,2)--(0,0);
 \draw[->, color= darkgray] (-0.5,0) -- (2.5,0);
\draw[->, color= darkgray] (0,-0.5) -- (0,2.5);
\foreach \position in {(1,0),(1,1)} 
\filldraw \position circle (2pt);
\draw (0.2,1.8) node[right] {$P_2+(0,-\frac12)$};
\end{tikzpicture} \hspace{5mm}
\begin{tikzpicture}[scale=0.8]
 \filldraw[shift={(0,-0.5)},color=gray!30] (0,0)--(2,1)--(0,1)--(0,0);
 \draw[shift={(0, -0.5)},very thick] (0,0)--(2,1)--(0,1)--(0,0);
\draw[->, color= darkgray] (-0.5,0) -- (3,0);
\draw[->, color= darkgray] (0,-0.5) -- (0,2.5);
\foreach \position in {(0,0),(1,0)} 
\filldraw \position circle (2pt);
 \draw (1.5,1.8) node {$P_3+(0, -\frac12)$};
\end{tikzpicture}
   \caption{Supports of the monomials appearing in Example  \ref{running} with $\delta=(0,-\frac12)$.}  \label{fig01}
\end{figure}

Our results are sharp also in terms of the size of the set of exponents, as it is known that  if one actually replaces $D_\delta$ with $\sum_{j=1}^{n+r} -D_j,$ then the corresponding graded piece of the last map of $Kos(S',F_1',\ldots, F_k')_{\alpha_\delta+\alpha_P+\alpha_Q}$ is never onto except for very degenerate situations. This is known if $k=n+1$  and all the polytopes $P_i$ being $n-$th dimensional (cf.~\cite{CD05}), and can be confirmed also in our running Example \ref{running}, as by counting the number of lattice points in the interior of the polytopes produces a complex  of the form
$$ 0\to K\to K^4\oplus K^2\oplus K\to K^7\to0,
$$
which can never be exact as the alternating sum of the dimensions above is not zero.

It should be emphasized, however, that general bounds for the sparse Nullstellensatz are known (see for instance \cite{som99}) and they are larger in size and nature than those presented in Theorem \ref{spr}. As in Remark \ref{naiv}, imposing stronger conditions on the input system is an advantage, but extra ideas are needed to deal with the general sparse Nullstellensatz than those treated here.

\smallskip
To generalize Theorem \ref{chom}, we need to consider generic families of $n+1$ polynomials with supports in $\cA_1,\ldots, \cA_{n+1}$ respectively:
\begin{equation}\label{gfi}
f^g_i:=\sum_{b\in\cA_i} c_{i,b} t^b, \ 1\leq i\leq n+1.
\end{equation}
with $c_{i,b}$ being new indeterminates.  As before, here we denote with $\K$ the field $\Q(c_{i,b}),$ and with $S'_\K:=\K[x_1,\ldots, x_{n+r}],$ the homogeneous coordinate ring of $X_{P+Q}$.

Associated with these data, there is a {\em sparse resultant} (\cite{GKZ94,CLO05, DS15}) which we will denote with $\Res_{\cA_1,\ldots, \cA_{n+1}}\in\Z[c_{i,b}],$ and it will be properly defined in Section \ref{sr}. The main property of this polynomial is that if it is not identically $1,$ then it vanishes under a specialization of $f^g_i\mapsto f_i\in K[t^{\pm1}]$ if and only if the homogenized system $F_1,\ldots, F_{n+1}\in S$ made from $f_1,\ldots, f_{n+1}$ with respect to $P_1,\ldots, P_{n+1}$ respectively as in  \eqref{eFFe}  has a nontrivial solution in $X_P.$

\begin{example}
{\em 
For our running Example \ref{running}, we have that, up to a sign, 
\begin{equation}\label{res123}
\begin{array}{ccl}
\Res_{\cA_1,\cA_2,\cA_3}&=&-f_{10} f_{11}^3 f_{21} f_{22} f_{30}^2 + f_{10} f_{11}^3 f_{20} f_{22} f_{30} f_{31} + 
  f_{10}^4 f_{22}^2 f_{31}^2 - f_{11}^4 f_{20} f_{21} f_{30} f_{32} \\ &&  + f_{11}^4 f_{20}^2 f_{31} f_{32} + 
  2 f_{10}^3 f_{11} f_{21} f_{22} f_{31} f_{32} + f_{10}^2 f_{11}^2 f_{21}^2 f_{32}^2.
  \end{array}
  \end{equation}}
  \end{example}
  
To avoid an excess of notation, we denote with $F^g_1,\ldots, F^g_{n+1}\in S'_\K$ the homogeneizations of \eqref{gfi} to the coordinate ring of $X_{P+Q}.$

For any degree $\beta,$ we have that $Kos(S'_\K,F^g_1,\ldots, F^g_{n+1})_{\beta}$ is a complex of finite dimensional $\K$-vector spaces, each of them either being trivial or having a monomial basis. When this complex is exact, its determinant  will be taken with respect to these monomial bases.

\begin{theorem}\label{csparse}
Let $K$ be a field of characteristic zero,  $\cA_1,\ldots, \cA_{k}\subset\Z^n$ finite sets,  $P$ as in \eqref{pp},  $f^g_1,\ldots, f^g_{n+1}$ be as in \eqref{gfi},  and $F^g_1,\ldots, F^g_{n+1}$ their respective $P_i$-homogenizations, $1\leq i\leq n+1.$ If $P$ is $n$-dimensional, $Q$ any integral polytope, and $\delta\in\Q^n,$ then  $Kos(S'_\K,F^g_1,\ldots, F^g_{n+1})_{\alpha_\delta+\alpha_P+\alpha_Q}$ is exact as a complex of $\K$-vector spaces. Moreover:
\begin{enumerate}
\item Up to a nonzero factor in $\Q,$ the determinant of $Kos(S'_\K,F^g_1,\ldots, F^g_{n+1})_{\alpha_\delta+\alpha_P+\alpha_Q}$ is equal to $\Res_{\cA_1,\ldots, \cA_{n+1}}.$
\item Up to a sign, $\Res_{\cA_1,\ldots, \cA_{n+1}}$ is equal to the $\gcd$ in $\Z[c_{i,b}]$ of the maximal minors of  the rightmost nontrivial map of $Kos(S'_\K,F^g_1,\ldots, F^g_{n+1})_{\alpha_\delta+\alpha_P+\alpha_Q}$.
\item Let $K$ be any field. For sparse polynomials $f_1,\ldots, f_{n+1}\in K[t^{\pm1}]$ with supports  $\cA_1,\ldots, \cA_{n+1}$ respectively, such that $\Res_{\cA_1,\ldots, \cA_{n+1}}\neq1,$ denote with $F'_1,\ldots, F'_{n+1}$ the homogeneizations of these polynomials as in \eqref{eFFe}.
 Then,   the complex $Kos(S'_K,F'_1,\ldots, F'_{n+1})_{\alpha_\delta+\alpha_P+\alpha_Q}$ is exact if and only if $\Res_{\cA_1,\ldots, \cA_{n+1}}(f_1,\ldots, f_{n+1})\neq0.$
\end{enumerate}
\end{theorem}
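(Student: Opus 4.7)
The plan is to adapt the classical argument for Theorem \ref{chom} to the sparse setting, substituting Theorem \ref{g1} for Theorem \ref{g0} wherever exactness is invoked. Since the coefficients $c_{i,b}$ are algebraically independent over $\Q$ and since (in the non-degenerate situation where $\Res_{\cA_1,\ldots,\cA_{n+1}}\not\equiv 1$) the evaluation $\Res_{\cA_1,\ldots,\cA_{n+1}}(c_{i,b})$ is a nonzero element of $\K$, the generic polynomials $F^g_1,\ldots,F^g_{n+1}$ have no common zero in $X_P$ over $\ov{\K}$; Theorem \ref{g1} over the characteristic-zero field $\K$ then yields exactness of $Kos(S'_\K,F^g_1,\ldots,F^g_{n+1})_{\alpha_\delta+\alpha_P+\alpha_Q}$.

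For item (1), I would view all differentials as matrices over $\Z[c_{i,b}]$ in the natural monomial bases; by a standard integrality argument the determinant $D$ of the exact complex lies in $\Z[c_{i,b}]$ up to sign. To identify $D$ with $\Res_{\cA_1,\ldots,\cA_{n+1}}$ up to a rational scalar, I would first establish divisibility: for any specialization of the $c_{i,b}$ into an algebraically closed extension of $\Q$ at which $\Res$ vanishes, the specialized $F_i$ share a common zero in $X_P$ by the defining property of the sparse resultant, so by Theorem \ref{g1} the specialized complex fails to be exact and $D$ must vanish at this specialization. Irreducibility of $\Res_{\cA_1,\ldots,\cA_{n+1}}$ in $\Z[c_{i,b}]$ then gives $\Res\mid D$. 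Matching degrees in each block $(c_{i,b})_{b\in\cA_i}$ finishes the identification: the degree of $\Res$ in such a block equals the mixed volume $\MV(P_1,\ldots,\widehat{P_i},\ldots,P_{n+1})$, while the degree of $D$ is an alternating sum over the terms of the Koszul complex weighted by their dimensions, each of which is an Ehrhart-type lattice-point count on the Minkowski sums $P_{j_1}+\cdots+P_{j_{i-1}}+Q+\delta$; the two formulas should agree.

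Items (2) and (3) follow from (1) by general principles. For (2), the standard identity expressing the determinant of an exact complex of free modules as an alternating product of determinants of chosen square submatrices of consecutive differentials, combined with Cramer's rule applied to an invertible submatrix of $\Psi_2$ over $\K$, identifies $D$ with the maximal minor of $\Psi_1$ complementary to the chosen minor of $\Psi_2$ divided by that minor; taking $\gcd$ over all admissible choices kills the spurious denominators and recovers $\Res_{\cA_1,\ldots,\cA_{n+1}}$ up to sign. For (3), the identity $\Res = c\cdot D$ in $\Z[c_{i,b}]$ with $c\in\Q^*$ persists under any ring homomorphism into a field $K$; after verifying by a content argument that $c=\pm 1$, the exactness of the specialized complex becomes equivalent to nonvanishing of the specialized $D$, hence of $\Res_{\cA_1,\ldots,\cA_{n+1}}(f_1,\ldots,f_{n+1})$, in arbitrary characteristic.

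The main technical obstacle is the degree computation in item (1), namely showing that the exponent of $\Res_{\cA_1,\ldots,\cA_{n+1}}$ in $D$ is exactly one. This demands a delicate Euler-characteristic and Ehrhart-polynomial analysis of the dimensions of the graded pieces of $Kos(S'_\K,F^g_1,\ldots,F^g_{n+1})_{\alpha_\delta+\alpha_P+\alpha_Q}$, particularly subtle when $\delta\notin\Z^n$ shifts the lattice-point counts inside each Minkowski sum. A secondary point of care is the treatment of the degenerate case $\Res_{\cA_1,\ldots,\cA_{n+1}}\equiv 1$ and the content argument pinning down $c=\pm 1$, both of which are needed for item (3) to be valid over fields of positive characteristic.
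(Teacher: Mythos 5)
Your outline identifies the right intermediate statements, but the steps you label as ``standard'' are precisely where the content of the proof lies, and one of them is false as stated. First, for a generically exact complex of length $\ge 3$ the determinant is an alternating ratio of minors and does \emph{not} automatically lie in $\Z[c_{i,b}]$; integrality, and likewise the identification of the determinant with the $\gcd$ of the maximal minors of $\Psi_1$ in item (2), both require verifying that the higher cohomology modules of the complex over the polynomial ring have multiplicity zero along every height-one prime (this is Theorem 34 of \cite[Appendix A]{GKZ94}, quoted as Theorem \ref{mult} in the paper). Cramer's rule over $\K$ plus ``taking gcds kills the spurious denominators'' does not establish this. The paper verifies the hypothesis by a structural argument that is entirely absent from your proposal: comparing $Kos(S'_\K,F^g_1,\ldots,F^g_{n+1})$ with the Bender--Telen resolution of the ideal $\langle F^g_1,\ldots,F^g_n\rangle$ in the same degree (which is exact with quotient of dimension $MV(P_1,\ldots,P_n)$ by Bernstein's theorem), and producing admissible submatrices $M_i$, $i\ge 2$, whose determinants do not involve the coefficients of $F^g_{n+1}$ (Lemma \ref{aucs} and Proposition \ref{lab}). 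The same lemma yields the degree of $\Det$ in each block of coefficients directly, namely the corank $MV(P_1,\ldots,\check{P_i},\ldots,P_{n+1})$, thereby sidestepping the Ehrhart-type alternating-sum identity for the $\delta$-shifted, half-open polytopes that your route would require; you explicitly leave that identity, which is the crux of your item (1), unproved.

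Second, your argument for item (3) relies on the equivalence ``exactness of the specialized complex $\iff$ nonvanishing of the specialized determinant,'' which fails for complexes of length $\ge 3$: the determinant only detects degeneracy in codimension one, so the specialized polynomial $\Det$ can be nonzero while the specialized complex is not exact. The paper instead deduces item (3) from Theorem \ref{g1} together with Proposition \ref{ress}(2), i.e.\ from the geometric characterization of exactness by emptiness of $V_{X_{P+Q}}(F'_1,\ldots,F'_{n+1})$. Finally, the normalization $c=\pm 1$ needed over $\Z$ is not a routine content computation: the paper exhibits a concrete maximal minor of $\Psi_1$ whose initial term is a monomial (a Canny--Emiris minor from \cite{DJS23}), so that no prime integer divides the $\gcd$. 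Without these ingredients the proposal is a plan rather than a proof.
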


The degree $\alpha_\delta$ in the grading of the Koszul complex in Theorem \ref{spr} generalises the ``$-n$'' in the bound $d_1+\ldots+d_{n+1}-n$ appearing in Theorem \ref{chom}. In addition, all Canny-Emiris type matrices (\cite{CE93,DJS23}) for the resultant can be regarded as non singular maximal square submatrices of the rightmost nontrivial map in $Kos(S'_\K,F^g_1,\ldots, F^g_{n+1})_{\alpha_\delta+\alpha_P}$, for a suitable generic  $\delta\in\Q^n.$  The effective degree $\alpha_Q$ added corresponds to another polytope $Q,$  which also produces a new family of generalized Canny-Emiris type matrices as it was described in \cite[Remark $4.28$]{DJS23}.

\begin{example}
{\em Continuing with our running Example \ref{running}, we have that the determinants of the complexes \eqref{gg1} and \eqref{gg2} are equal to $\Res_{\cA_1,\cA_2,\cA_3}$ described in \eqref{res123}. It is well-known  that nonzero maximal minors for the rightmost nontrivial map of these two complexes can be obtained ``a la Canny-Emiris'' as explained in \cite{CE93, DJS23}. From Theorem \ref{csparse} we now deduce that $\Res_{\cA_1,\cA_2,\cA_3}$ is the $\gcd$ of all these maximal minors. 
}
\end{example}

It should be also mentioned that Theorem \ref{csparse} gives a positive answer to Remark 4.20 in \cite{DJS23} and it is independent of the results in that paper. So, it provides a simplification to the approach of proving Canny-Emiris' conjecture for computing $\Res_{\cA_1,\ldots, \cA_{n+1}}$ as a quotient of two determinants.

The paper is organized as follows: in Section \ref{crus} we will review basic results on toric varieties, sparse resultants, and determinants of complexes which will be of use in our proofs. We then turn to Section \ref{sec: proofthm1} to prove Theorems \ref{g1} and \ref{spr}, and finish with the proof of Theorem \ref{csparse} in Section \ref{ultt}.

\bigskip
\noindent\underline{\bf Acknowledgements:}
We thank the reviewers for their helpful and insightful comments which helped improving the presentation of this text.
Part of this work was done while the second author was visiting the University of Barcelona in July 2023, and the first author was visiting the University of Buenos Aires in December 2023. We are grateful to the support and stimulating atmosphere provided by both institutions. These research trips were funded partially by the Spanish MICINN research projects  PID2019-104047GB-I00 and PID2023-147642NB-I00. In addition, C. D'Andrea was supported by the Spanish State Research Agency, through the Severo Ochoa and Mar\'ia de Maeztu Program for Centers and Units of Excellence in R\&D (CEX2020-001084-M).
G. Jeronimo was partially supported by the Argentinian grants UBACYT 20020190100116BA and PIP 11220130100527CO CONICET.

\bigskip
\section{Background and useful tools}\label{crus}

\subsection{Toric Varieties}
 All the terminology and most of the results that we are going to use here are borrowed from \cite{CLS11}. For simplicity and abuse of notation, we will assume in this section that $K$ is an algebraically closed field of characteristic zero (what was $\overline{K}$ in the previous one).  Let $X$ be an $n$-dimensional {\em complete  toric variety}. As such, there is an $n$-dimensional lattice $N$, and $X$ is determined by a fan $\Sigma$  in $N_\R:=N\otimes\R$ such that  $|\Sigma|=N_\R.$
Let $M$ be the dual lattice of $N$, and denote with $\eta_1,\ldots, \eta_{n+r}$ ($r\geq1$) the primitive generators in $N$ of the $1$-dimensional cones in $\Sigma.$ We have then that  $r$ is the rank of the {\em Chow group} $A_{n-1}(X)$, which can be represented as the cokernel of the morphism
\begin{equation}\label{cok}
\begin{array}{ccl}
M & \to & \Z^{n+r}\\
m & \to &(\langle m,\eta_1\rangle ,\ldots, \langle m,\eta_{n+r}\rangle).
\end{array}
\end{equation}
Let $S:=K[x_1,\ldots, x_{n+r}]$ be the {\em homogeneous coordinate ring} of $X$, as defined in \cite{cox95}. As its name suggests, one can regard $x_1,\ldots, x_{n+r}$ as (homogeneous) coordinates of $X$ as follows: for each $\sigma\in\Sigma$, we set $x^{\widehat{\sigma}}:=\prod_{\eta_j\notin\sigma} x_j.$ Set  \begin{equation}\label{z}Z=V(x^{\widehat{\sigma}},\, \sigma\in\Sigma)\subset K^{n+r}.\end{equation} Each point  $\xi\in K^{n+r}\setminus Z,$  corresponds to a point $x_\xi\in X.$
It can be shown that the map $K^{n+r}\setminus Z\to X$ which sends $\xi$ into $x_\xi$ is such that makes $X$ a ``quotient'' of $K^{n+r}\setminus Z$ by the action of an $r$-dimensional group $G,$ see \cite{cox95} for more details.

As any $n$-dimensional toric variety, $X$ has  the torus $\T_{{K}}^n$ as an open dense subvariety, with coordinate ring $K[t^{\pm1}],$ and the ``trivial'' fan $\{\bf0\}.$ These coordinates are related to those given by $x_1,\ldots, x_{n+r}$ via
\begin{equation}\label{tx}
t_i=\prod_{j=1}^{n+r} x_j^{\langle\eta_j,e_i\rangle}, \ i=1,\ldots, n.
\end{equation}
From this we mean that given $\bft=(t_1,\ldots,t_n)\in\T_{{K}}^n,$ there exist $(x_1,\ldots, x_{n+r})\in K^{n+r}\setminus Z$ satisfying \eqref{tx}. So, there is a point in $X$, the quotient of this set via the group $G,$ representing $\bft.$ Note that all the $x_i$'s are actually in $K\setminus\{0\}$ so any point of $\T_{{K}}^n$ is actually lifted to a point in $\T_{{K}}^{n+r}\subset K^{n+r}\setminus Z.$

Each variable $x_j$ of $S$ corresponds to a generator $\eta_j$ and hence to a torus-invariant irreducible divisor $D_j$ of $X.$ We will  denote with $[D_j]$ the class of $D_j$ in $A_{n-1}(X),\, j=1,\ldots, n+r.$ As in \cite{cox95} we grade $S$ in such a way that the monomial
$\prod_{j=1}^{n+r}x_j^{a_j}$ has degree $\sum_{j=1}^{n+r} a_j[D_j]\in A_{n-1}(X).$
From \eqref{cok} it is clear that $\prod_{j=1}^{n+r}x_j^{a_j}$ and $\prod_{j=1}^{n+r}x_j^{b_j}$ have the same degree if and only if there is $m\in M$ such that
\begin{equation}\label{sdg}
b_j=\langle m,\eta_j\rangle+a_j, \ j=1,\ldots, n+r.
\end{equation}
Set $M_\R:= M\otimes\R$. Associated to a divisor
$
D=\sum_{j=1}^{n+r} a_j D_j,$  we can define the polytope
\begin{equation}\label{pd}
P_D:=\{u\in M_\R:\,\langle u,\eta_j\rangle\geq-a_j,\,1\leq j\leq n+r\}\subset M_\R.
\end{equation}
Note that there may be more than one way of writing $D$ as a linear combination of the $D_j$'s, so the polytope $P_D$ is well defined up to translations by elements in $M$.
By denoting with $S_\alpha$ the graded piece of $S$ of degree $\alpha=[D],$ we have that
\begin{equation}\label{kor}
S_\alpha \simeq H^0(X,\cO_X(D))=\bigoplus_{m\in P_D\cap M}K\cdot \bfchi^{m},
\end{equation}
where $\bfchi^m\in \mbox{Hom}(N\otimes_\Z K^\times, K^\times)$ is a character in $\T_{{K}}^n\simeq N\otimes_\Z K^\times,$ and $\cO_X(D)$ denotes the coherent sheaf on $X$ associated to the divisor $D$, see \cite[Proposition 1.1]{cox95}. So, any polynomial $F\in S$ of degree $\alpha$ can be written as
\begin{equation}\label{eFe}
F=\sum_{m\in P_D\cap M}c_m\prod_{j=1}^{n+r}x_j^{\langle m,\eta_j\rangle+a_j},
\end{equation}
with $c_m\in K.$ A point $x\in X$ is a {\em zero} of $F$ if there exists $\xi\in K^{n+r}$ such that $F(\xi)=0$ and $x_\xi=x.$ For a finite set $\cF$ of homogeneous elements of $S,$ we denote with $V_X(\cF)\subset X$ the set of common zeroes of all $F\in\cF.$ It is a Zariski closed set in $X.$

A divisor $D$ is {\em effective} if one can write $D=\sum_{j=1}^{n+r} b_j D_j$ with $b_j\geq0.$
Recall that (\cite[Theorem 4.2.8]{CLS11})  a divisor $D=\sum_{j=1}^{n+r} b_j D_j$ is Cartier if and only if for all $\sigma\in\Sigma$ generated by $\{\eta_{j_1},\ldots, \eta_{j_l}\},$ there exists $m_\sigma\in M$ such that $\langle m_\sigma,\eta_{j_i}\rangle=-b_{j_i}$ for $i=1,\ldots, l.$

We also have the following results which will be of use in the sequel.
\begin{proposition}\cite[Proposition 6.1.1]{CLS11}\label{usam}
A Cartier  divisor $D$  is basepoint free, i.e.~$\cO_X(D)$ is generated by its global sections, if and only if for all $n$-dimensional $\sigma\in\Sigma,\, m_\sigma\in P_D.$
\end{proposition}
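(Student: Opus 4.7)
The plan is to reduce basepoint‑freeness of $\cO_X(D)$ from an a priori global condition to a finite combinatorial check at the torus‑fixed points of $X$, and then identify exactly when each check succeeds.

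First I would show that it suffices to check generation at each torus‑fixed point. The locus $B\subset X$ where $\cO_X(D)$ fails to be generated by its global sections is closed (nonvanishing of a section is an open condition) and torus‑invariant (because $\cO_X(D)$ is torus‑equivariant), hence is a union of torus‑orbit closures. If $B$ is nonempty, it contains a minimal orbit, which must therefore be a torus‑fixed point. By the orbit–cone correspondence, combined with the fact that $X$ is complete so that $|\Sigma|=N_\R$ and the minimal orbits come from the maximal cones, the torus‑fixed points of $X$ are in bijection with the $n$‑dimensional $\sigma\in\Sigma$; write $x_\sigma\in U_\sigma$ for the corresponding point. So $\cO_X(D)$ is basepoint‑free iff for every such $\sigma$ there exists a global section nonvanishing at $x_\sigma$.

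Next I would make this condition explicit using the Cartier data. On $U_\sigma$ the character $\chi^{m_\sigma}$ trivializes $\cO_X(D)$, since $\langle m_\sigma,\eta_{j_i}\rangle=-a_{j_i}$ for the generators $\eta_{j_1},\ldots,\eta_{j_n}$ of $\sigma$. In this trivialization the global section associated via \eqref{kor} to $m\in P_D\cap M$ reads as the monomial $\chi^{m-m_\sigma}$; the defining inequalities of $P_D$ applied to those $\eta_{j_i}$ give
$$\langle m-m_\sigma,\eta_{j_i}\rangle \geq -a_{j_i}-(-a_{j_i})=0,$$
so $m-m_\sigma\in\sigma^\vee\cap M$ and $\chi^{m-m_\sigma}$ is a regular function on $U_\sigma$. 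Since $x_\sigma$ corresponds to the maximal ideal generated by the nontrivial characters in $\sigma^\vee\cap M$, the function $\chi^{m-m_\sigma}$ is nonzero at $x_\sigma$ iff $m=m_\sigma$. Therefore the generation condition at $x_\sigma$ becomes exactly $m_\sigma\in P_D\cap M$; as $m_\sigma\in M$ by construction, this reduces to $m_\sigma\in P_D$.

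The main technical obstacle is cleanly running the reduction in the first step: one must verify that the non‑basepoint‑free locus is torus‑invariant and closed, and that completeness of $X$ forces a closed orbit inside any nonempty torus‑invariant closed subset of $X$. Both are standard consequences of the orbit–cone dictionary on complete toric varieties, but they are where the global geometry of $X$ enters the argument; once they are granted, everything else is bookkeeping of exponents in the character lattice $M$, driven by the equalities satisfied by $m_\sigma$ and the inequalities defining $P_D$.
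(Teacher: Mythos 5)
The paper states this result as a direct citation of \cite[Proposition 6.1.1]{CLS11} and gives no proof of its own, so there is nothing internal to compare against; your argument is correct and is essentially the standard proof of the cited result. The only stylistic difference from the reference is that you handle the converse by showing the base locus, being closed and torus-invariant, must contain a torus-fixed point if nonempty (rather than checking that $\chi^{m_\sigma}$ generates $\cO_X(D)$ on all of $U_\sigma$), and both that reduction and the local computation at $x_\sigma$ via the trivialization by $\chi^{m_\sigma}$ are carried out correctly.
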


A {\em very ample} divisor $D$ (\cite[Definition 6.1.9]{CLS11}) is one being basepoint free, and also such that the map
\begin{equation}\label{phialpa}
\begin{array}{cccl}
\phi_D:& X& \to & \P(H^0(X,\cO_X(D))^\vee)\\
& x &\mapsto & (\bfchi^m(x))_{m\in P_D\cap M}
\end{array}
\end{equation}
is a closed embedding. An {\em ample} divisor $D$ is one such that $\ell D$ is very ample for some integer $\ell>0.$  A {\em semi-ample} divisor is one such that $\cO_X(D)$ is generated by its global sections, or equivalently (as $X$ is a complete toric variety)  basepoint free, see \cite[Theorem 6.3.12]{CLS11}.  The following criteria for ampleness and semi-ampleness will be of use in the sequel. For a Cartier divisor $D,$ the function $\varphi_D: N\otimes\R\to\R$ such that $\varphi_D(u):=\langle m_\sigma, u\rangle$ if $u\in\sigma$, is well defined, and linear in each cone $\sigma$ (\cite[Theorem 4.2.12]{CLS11}).
\begin{proposition}[Theorems 6.1.7 and  6.1.14 in \cite{CLS11}]\label{convex} If $X$ is a complete toric variety, and $D$ a Cartier divisor on $X,$ then
\begin{enumerate}
\item $D$ is semi-ample $\iff \varphi_D$ is a  convex function.
\item $D$ is ample $\iff \varphi_D$ is a strictly convex function.
\end{enumerate}
\end{proposition}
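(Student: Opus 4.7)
For part (1), my plan is to translate both sides of the equivalence into conditions on the lattice elements $m_\sigma$ attached to the maximal cones of $\Sigma$, and then check they coincide. By Proposition \ref{usam}, $D$ is semi-ample iff $m_\sigma\in P_D$ for every $n$-dimensional $\sigma\in\Sigma$, which by the defining inequalities of $P_D$ in \eqref{pd} unpacks to $\langle m_\sigma,\eta_j\rangle\geq -a_j$ for every ray generator $\eta_j$. The crucial observation is that $\varphi_D(\eta_j)=-a_j$: indeed $\R_{\geq 0}\eta_j$ is a face of some $n$-dimensional $\sigma'\in\Sigma$, and by the Cartier condition recalled before Proposition \ref{usam} one has $\langle m_{\sigma'},\eta_j\rangle=-a_j$, so $\varphi_D(\eta_j)=-a_j$ by the very definition of $\varphi_D$. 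Hence semi-ampleness reads: for every maximal $\sigma$ and every ray $\eta_j$, the linear functional $\ell_\sigma:=\langle m_\sigma,\cdot\rangle$ satisfies $\ell_\sigma(\eta_j)\geq \varphi_D(\eta_j)$.

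Next I would upgrade these ray inequalities to a global comparison. Given any $u\in N_\R$, pick an $n$-dimensional cone $\sigma''$ with $u\in\sigma''$ and write $u=\sum_i \lambda_i\eta_{j_i}$ as a nonnegative combination of the rays of $\sigma''$. Since $\varphi_D$ is linear on $\sigma''$ while $\ell_\sigma$ is globally linear,
\[
\ell_\sigma(u)=\sum_i\lambda_i\,\ell_\sigma(\eta_{j_i})\geq\sum_i\lambda_i\,\varphi_D(\eta_{j_i})=\varphi_D(u).
\]
Thus each local linear piece $\varphi_D|_\sigma=\ell_\sigma$ dominates $\varphi_D$ pointwise on $N_\R$, which is precisely the piecewise-linear convexity criterion from \cite{CLS11} (equivalently, $\varphi_D$ is the pointwise minimum of the $\ell_\sigma$'s). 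Conversely, convexity forces $\ell_\sigma(\eta_j)\geq\varphi_D(\eta_j)=-a_j$ and hence $m_\sigma\in P_D$, closing part (1).

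For part (2), the target is strict convexity, i.e.\ $\ell_\sigma(u)>\varphi_D(u)$ whenever $u\notin\sigma$, which is equivalent to saying that distinct maximal cones produce distinct linear pieces. The $(\Leftarrow)$ direction: strict convexity of $\varphi_D$ transfers to $\varphi_{\ell D}=\ell\,\varphi_D$ for any integer $\ell\geq 1$, placing the $m_\sigma$ at distinct vertices of $P_{\ell D}$ with local fans matching $\Sigma$; this separates the torus-fixed points of $X$ via distinct characters $\bfchi^{m_\sigma}$ in \eqref{phialpa}, and I would invoke the orbit-cone correspondence from \cite{CLS11} together with part (1) applied to $\ell D$ to upgrade this to the separation of orbits and tangent directions needed for $\phi_{\ell D}$ to be a closed embedding for $\ell$ large. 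The $(\Rightarrow)$ direction runs in reverse: a very ample multiple $\ell D$ gives a closed embedding, so if two maximal cones shared the same $m_\sigma$ the associated torus-fixed points would coincide under $\phi_{\ell D}$, contradicting injectivity. The main obstacle I anticipate is precisely the $(\Leftarrow)$ step in (2): translating the purely combinatorial strict convexity into the geometric separation of points and tangent directions required for $\phi_{\ell D}$ to embed $X$, which is the place where one must genuinely use the full toric dictionary rather than the lightweight polytope bookkeeping that sufficed for part (1).
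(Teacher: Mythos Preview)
The paper does not prove this statement at all: Proposition~\ref{convex} is stated with an explicit citation to Theorems~6.1.7 and~6.1.14 of \cite{CLS11} and is used as a black box. There is therefore no ``paper's own proof'' to compare against; the authors simply import the result from the literature.

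That said, your argument for part~(1) is essentially the standard one and is correct: the chain
\[
D\text{ basepoint free} \iff m_\sigma\in P_D\ \forall\sigma \iff \langle m_\sigma,\eta_j\rangle\geq -a_j=\varphi_D(\eta_j)\ \forall\sigma,j \iff \ell_\sigma\geq\varphi_D\text{ on }N_\R\ \forall\sigma
\]
is exactly how \cite{CLS11} proceeds, and your extension from rays to all of $N_\R$ via nonnegative combinations in a chosen maximal cone is the right move. Your sketch of part~(2) is honest about where the real content lies: the $(\Leftarrow)$ direction genuinely requires the full toric dictionary (showing that strict convexity forces the map $\phi_{\ell D}$ to separate points and tangent vectors on every orbit, not just at torus-fixed points), and what you have written there is an outline rather than a proof. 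For the purposes of this paper, however, none of that matters---the proposition is quoted, not proven, so your proposal already goes well beyond what the paper does.
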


\begin{definition}[Definition $2.2$ in \cite{BT22}]
Given a zero-dimensional subscheme $Y$ of $X,$ a degree $\alpha$ is called {\em $Y$-basepoint free} if there exists $H\in S_\alpha$  such that $Y\cap V(H)=\emptyset.$
\end{definition}

The following result is going to be useful in the text.
\begin{proposition}\label{bpf}
If $Y$ is zero-dimensional and contained in the torus $\T_{K}^n,$ then any effective $\alpha$  is $Y$-basepoint free.
\end{proposition}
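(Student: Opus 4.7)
\begin{Proof}{Proof plan for Proposition \ref{bpf}.}
The plan is to exhibit an explicit monomial in $S_\alpha$ whose vanishing locus lies entirely in the toric boundary $X\setminus\T_K^n$; since $Y$ lies in the torus, this monomial will automatically avoid $Y$.

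First, I would use the hypothesis that $\alpha$ is effective to pick a representative divisor $D=\sum_{j=1}^{n+r}a_j D_j$ with all $a_j\geq 0$, so that $[D]=\alpha$ in $A_{n-1}(X)$. By the grading of $S$ described just after \eqref{sdg}, the monomial
\[
H:=\prod_{j=1}^{n+r} x_j^{a_j}\in S
\]
is homogeneous of degree $\alpha$, so $H\in S_\alpha$.

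Next, I would check that $V_X(H)\cap\T_K^n=\emptyset$. By the discussion after \eqref{tx}, any $\bft\in\T_K^n$ lifts to a point $\xi\in\T_K^{n+r}\subset K^{n+r}\setminus Z$, i.e.\ a lift with all coordinates nonzero. Since $H$ is a monomial, $H(\xi)=\prod_j \xi_j^{a_j}\neq 0$, and as $H$ is homogeneous this is independent of the choice of lift (the group $G$ acts by a character on $H$). By the definition of a zero of a homogeneous element recalled before \eqref{eFe}, this means $\bft\notin V_X(H)$, so $V_X(H)\subset X\setminus\T_K^n$.

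Finally, since by hypothesis $Y\subset\T_K^n$, we conclude $Y\cap V_X(H)=\emptyset$, which is exactly the $Y$-basepoint freeness of $\alpha$. There is no serious obstacle here: the whole point is that effectivity of $\alpha$ guarantees a monomial representative, and the Cox construction forces such monomials to vanish only on the invariant boundary divisors, which are disjoint from the torus containing $Y$.
\end{Proof}
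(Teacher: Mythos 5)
Your proof is correct, but it takes a different and in fact more economical route than the paper's. The paper writes a \emph{general} element of $S_\alpha$ as in \eqref{eFe}, notes that each of the finitely many points of $Y\subset\T_K^n$ imposes one nontrivial linear condition on the coefficients $c_m$ (nontrivial because every monomial is nonzero at a torus point), and concludes by genericity that some choice of coefficients avoids all of them; this uses both the finiteness of $Y$ and the fact that $K$ is infinite. You instead use effectivity to produce the single explicit section $H=\prod_j x_j^{a_j}\in S_\alpha$ (the monomial corresponding to $m=0\in P_D\cap M$) and observe that its zero locus is contained in the toric boundary, since every lift of a torus point lies in $\T_K^{n+r}$ and homogeneity makes the (non)vanishing independent of the lift. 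Your argument therefore needs neither the finiteness of $Y$ nor the infinitude of $K$ --- only $Y\subset\T_K^n$ --- and it exhibits the witness $H$ explicitly rather than asserting its generic existence; the paper's genericity argument, on the other hand, shows the slightly stronger fact that a \emph{generic} section of degree $\alpha$ works. Both proofs rest on the same two ingredients (the monomial description \eqref{kor}--\eqref{eFe} of $S_\alpha$ and the nonvanishing of monomials on the torus), so the difference is one of packaging, but yours is the cleaner instantiation.
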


\begin{proof}
Let $D$ be such that $[D]=\alpha.$ Then any polynomial in $S_\alpha$ is as in \eqref{eFe} for suitable $c_m\in K,\ m\in P_D\cap M.$
By hypothesis, any point $\bfxi\in Y$ satisfies $\prod_{j=1}^{n+r}\xi_j^{\langle m,\eta_j\rangle+a_j}\neq0.$ Each of these finitely many points amounts to a linear inequality which can always be fulfilled for a suitable choice of coefficients in $K$.
\end{proof}

A {\em $\Q$-divisor}  is an element
of the form $\sum_{j=1}^{n+r} q_j D_j,$ with $q_j\in\Q.$  It is called {\em $\Q$-ample} (resp.~{\em $\Q$-Cartier}) if a positive integer multiple of it is ample (resp.~Cartier).
We will also need the following version of the generalization of Kawamata and Viehweg vanishing theorem for toric varieties given by Musta\c{t}\u{a}. For a $\Q$-divisor of the form $D=\sum_{j=1}^{n+r}q_jD_j,$ we define
$\lceil D\rceil:=\sum_{j=1}^{n+r}\lceil q_j\rceil D_j.$

\begin{theorem}[Corollary $2.5$ in \cite{mus02}]\label{mustata}
If $D$ is a $\Q$-Cartier $\Q$-ample divisor,  then  $H^i(X,\,\cO_{X}(\lceil D\rceil+\sum_{j=1}^{n+r}-D_j))=0$ for all $i>0.$
\end{theorem}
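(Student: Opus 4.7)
The plan is to adapt the standard combinatorial approach to Kawamata–Viehweg vanishing on toric varieties. Since the sheaf $\cO_X(E)$ with $E := \lceil D \rceil - \sum_{j=1}^{n+r} D_j$ is $T$-equivariant, one has the weight decomposition
$$H^i(X, \cO_X(E)) = \bigoplus_{m \in M} H^i(X, \cO_X(E))_m,$$
so it is enough to prove vanishing of each weight-$m$ summand for $i > 0$. I would then invoke the Demazure-type formula (Chapter 9 of \cite{CLS11}): for a torus-invariant divisor $E = \sum_j b_j D_j$, the weight-$m$ piece $H^i(X, \cO_X(E))_m$ is isomorphic to the reduced singular cohomology $\widetilde{H}^{i-1}(V_{E,m}; K)$, where $V_{E,m} \subseteq |\Sigma| = N_\R$ is the union of those cones $\sigma \in \Sigma$ containing at least one ray $\eta_j$ with $\langle m, \eta_j \rangle < -b_j$. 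In our case $b_j = \lceil q_j \rceil - 1$, and the goal reduces to showing $V_{E,m}$ is either empty or contractible for every $m \in M$.

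The key geometric input is $\Q$-ampleness. Choose $\ell \geq 1$ with $\ell D$ Cartier and ample; by Proposition \ref{convex} the support function $\varphi_{\ell D}$ is strictly convex, hence $\varphi_D := \tfrac{1}{\ell} \varphi_{\ell D}$ is a strictly convex piecewise rational-linear function on $N_\R$ with $\varphi_D(\eta_j) = -q_j$. The shift hidden in $b_j = \lceil q_j \rceil - 1$ is designed so that $b_j < q_j$ for every $j$, producing a uniform gap $-b_j > \varphi_D(\eta_j)$. The strict inequality $\langle m, \eta_j \rangle < -b_j$ defining $V_{E,m}$ should then propagate, via strict convexity of $\varphi_D$, from the rays into the relative interior of each cone, allowing one to show that $V_{E,m}$ deformation retracts onto a star-shaped subregion of $N_\R$ and is therefore empty or contractible.

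The main obstacle will be making this contractibility step precise: one has to translate strict convexity of the piecewise-linear function $\varphi_D$, together with strict ray-wise inequalities, into a genuine topological statement about a union of cones in $\Sigma$, rather than merely about a convex subset of $\R^n$. In \cite{mus02} Musta\c{t}\u{a} bypasses the explicit topology by working with multiplier ideals and local cohomology, which simultaneously yields the more general relative form of the vanishing; an alternative, more hands-on route is to compute $H^i(X, \cO_X(E))_m$ directly from the \v Cech complex of the affine cover $\{U_\sigma\}_{\sigma \in \Sigma}$ and to verify acyclicity in positive degree by a careful combinatorial argument that exploits the strict convexity of $\varphi_D$ cone by cone. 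Either route delivers the claimed vanishing.
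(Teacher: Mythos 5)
The paper does not actually prove Theorem \ref{mustata}: it is imported verbatim from Musta\c{t}\u{a} \cite[Corollary 2.5]{mus02} and used as a black box, so there is no internal argument to measure your sketch against. Judged on its own terms, your strategy (weight decomposition plus the combinatorial description of the graded pieces of toric sheaf cohomology, plus convexity of the support function) is a standard and viable route to toric Kawamata--Viehweg vanishing; but what you have written is a plan, and the step you explicitly defer is the entire mathematical content.

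Two concrete problems. First, the combinatorial formula is misquoted. In \cite[Theorem 9.1.3]{CLS11} the relevant set is $V_{E,m}=\bigcup_{\sigma\in\Sigma}\conv\big(\eta_j:\ \eta_j\in\sigma(1),\ \langle m,\eta_j\rangle<-b_j\big)$, the union over cones of the convex hulls of the ``bad'' ray generators each cone contains, \emph{not} the union of all cones containing a bad ray. These two spaces are not homotopy equivalent in general: on $\P^1\times\P^1$ with $E$ of bidegree $(-2,0)$ and the appropriate weight $m$, the bad rays are $e_1$ and $-e_1$, the convex-hull set is two points (correctly giving $H^1=K$), while every maximal cone contains a bad ray, so the union-of-cones set is all of $N\otimes\R$ and would wrongly predict $H^1=0$. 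So even your reduction to a topological statement is not yet in place. Second, the contractibility of $V_{E,m}$ is precisely where $\Q$-ampleness and the round-up must enter, and you do not carry it out. The mechanism is: since $\langle m,\eta_j\rangle\in\Z$ and $b_j=\lceil q_j\rceil-1$, the condition $\langle m,\eta_j\rangle<-b_j$ is equivalent to $\langle m,\eta_j\rangle\le-q_j=\varphi_D(\eta_j)$; the set $C=\{u:\ \langle m,u\rangle\le\varphi_D(u)\}$ is a convex cone by convexity of $\varphi_D$ (Proposition \ref{convex}), and strict convexity shows $C$ contains no line, so $C\setminus\{0\}$ is empty or contractible. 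One must then produce an actual homotopy equivalence between $V_{E,m}$ and $C\setminus\{0\}$, cone by cone, in the spirit of the proof of Demazure vanishing in \cite[\S 9.2]{CLS11}; this is not automatic, because $V_{E,m}$ only records the rays while $C$ is cut out by the full piecewise-linear function. Until that equivalence is written down, the vanishing is not proved; the honest alternatives are to cite \cite{mus02} as the paper does, or to execute the retraction in full.
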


\medskip
\subsection{The toric varieties $X_P$ and $X_{P+Q}$}\label{2.2}
Going back to our original formulation, assume that we have our input system \eqref{fi} with  associated supports $\cA_1,\ldots, \cA_k\subset\Z^n,$ and  polytopes $P_1,\ldots, P_k$ and $P$ as in \eqref{pp} being $n-$dimensional. Note that in this case we are identifying $M\simeq\Z^n.$ Let $Q$ be another integral polytope, and
denote with $\eta_1,\ldots, \eta_{n+r}$ the primitive inner normal vectors in $N\simeq\Z^n$ to the $n+r$ facets of $P+Q$. We then have  that there are integers $a_1,\ldots, a_{n+r}$ such that
\begin{equation}\label{P}
P=\{u\in\R^n:\, \langle u,\eta_j\rangle\geq-a_j,\, 1\leq j\leq n+r\}\subset\R^n,
\end{equation}
which implies that there are integers $a_{ij},\,1\leq i\leq k,\,1\leq j\leq n+r$ such that $a_j=a_{1j}+\ldots+a_{kj}$ for $j=1,\ldots, n+r,$  and
\begin{equation}\label{PPi}
P_i =\{u \in \R^n\, : \, \langle u,\eta_j \rangle \ge - a_{ij},\,\, 1\leq j\leq n+r\},\ i=1,\ldots, k.
\end{equation}
We also have that there are integers $b_1,\ldots, b_{n+r}$ such that
\begin{equation}\label{Q}
Q=\{u\in\R^n:\, \langle u,\eta_j\rangle\geq-b_j,\, 1\leq j\leq n+r\}\subset\R^n.
\end{equation}
Assume w.l.o.g. that $\eta_1,\ldots, \eta_{n+\ell}$ are the primitive inner normals to the facets of $P,$ and let $X_P$ be the toric variety associated with $P,$ i.e. the one  whose fan is the \textit{normal fan} $\Sigma_P$  defined in Chapter $2$ of \cite{CLS11}, where the maximal cones of this fan are in correspondence with the vertices of $P,$ and the cone associated to a vertex $v$ is generated by the rays generated by those $\eta_j$ such that the facet determined by $\eta_j$ contains $v$. In the same way the variety $X_{P+Q}$ is defined, by using the fan $\Sigma_{P+Q}$. Note that this fan refines $\Sigma_P$ and hence, thanks to Theorem $3.4.11$ in \cite{CLS11},  there is a toric proper morphism $X_{P+Q}\to X_P.$

For $i=1,\ldots, k$ we set $D_{P_i}:=\sum_{j=1}^{n+r}a_{ij}D_j$ and $\alpha_{P_i}:=\sum_{j=1}^{n+r} a_{ij}[D_j]$ the divisor associated to $P_i$ in $X_{P+Q}$ and its respective degree in the Chow group of this variety. Note that
$D_P=\sum_{i=1}^k D_{P_i}$ and $\alpha_P=\sum_{i=1}^k \alpha_{P_i}.$ Set also $D_Q=\sum_{j=1}^{n+r} b_j D_j$ and $\alpha_Q=[D_Q].$

As explained in \cite[\S $2.2$]{KS05}, for any integral polytope $P_i,$ the associated piece-wise linear function $\varphi_{D_{P_i}}:\R^n\simeq N\otimes\R\to\R$ defined by using its normal fan $\Sigma_{P_i}$ is convex, this is because $D_{P_i}$ is basepoint free in $X_{P_i}$ thanks to Proposition \ref{usam}.
The same applies to $D_P$ in $X_P$ and $D_Q$ in $X_Q$. As the fan $\Sigma_{P+Q}$ is a common refinement of $\Sigma_{P_i},\,\Sigma_P,\,\Sigma_Q$, it turns out that the pull-back of all these divisors to $X_{P+Q}$  has the same  (convex) piece-wise linear function, and define the same polytope.  This implies then that all the divisors $D_{P_i},\,1\leq i\leq k,\, D_P$ and $D_Q$ are Cartier and semi-ample in $X_{P+Q}.$ From \cite[Proposition 6.1.10]{CLS11}, we have that $D_{P+Q}:=D_P+D_Q$ is in addition ample in $X_{P+Q}.$ Note that neither $D_P$ nor $D_Q$ nor any of the $D_{P_i}$ is necessarily  ample.

As $\cA_i\subset\Z^n\cap P_i,$ each of the $f_i$'s  from \eqref{fi} can be  both ``$P-$homogeneized'' and ``$(P+Q)$-homogeneized''  to $F_i$  and $F_i'$ respectively, using \eqref{kor} and \eqref{eFe} as follows:
\begin{equation}\label{eFFe}
\begin{array}{ccl}
F_i(x_1,\ldots, x_{n+\ell})&=&\sum_{b\in\cA_i} f_{i,b}\prod_{j=1}^{n+\ell}x_j^{\langle b,\eta_j\rangle+a_{ij}}=\Big(\prod_{j=1}^{n+\ell}x_j^{a_{ij}}\Big)f_i(t_1,\ldots, t_n),\\
F_i'(x_1,\ldots, x_{n+r})&=&\sum_{b\in\cA_i} f_{i,b}\prod_{j=1}^{n+r}x_j^{\langle b,\eta_j\rangle+a_{ij}}=\Big(\prod_{j=1}^{n+r}x_j^{a_{ij}}\Big)f_i(t_1,\ldots, t_n),
\end{array}
\end{equation}
the right hand side equalities  due to \eqref{tx}. From here, it is clear that if $\bft\in\T_{\overline{K}}^n$ is such that $f_i(\bft)=0,$ then $F_i(x_\bft)=F'_i(x'_\bft)=0$ for the corresponding points of $\bft$ in the varieties $X_P$ and $X_{P+Q}$ respectively.   It turns out that the closure of the zeroes of $f_i$ in $X_P$ (resp. $X_{P+Q}$) is the variety of zeroes of $F_i$ (resp. $F_i'$).

\begin{example}
{\em
For our running Example \ref{running}, the heptagon $P$ has the following inner normals, numbered clockwise, se Figure \ref{figoo}:
$$\begin{array}{l}
\eta_1=(1,0),\,\eta_2=(2,-1),\, \eta_3=(1,-1),\, \eta_4=(0,-1),\,\eta_5=(-1,-1),\\ \eta_6=(-1,1),\,\eta_7=(-1,2).
\end{array}
$$}

\begin{figure}\label{polifan}
\begin{tikzpicture}
\filldraw[color=gray!40] (0,0)--(1,0)--(-0.45,0.9)--(0,0);
\filldraw[shift={(0,1)},color=gray] (0,0)--(1,0)--(0.9,-0.45)--(0,0);
\filldraw[shift={(1,3)},color=gray!70] (0,0)--(0.7,-0.7)--(0.9,-0.45)--(0,0);
\filldraw[shift={(2,4)},color=gray!40] (0,0)--(0.7,-0.7)--(0,-1)--(0,0);
\filldraw[shift={(4,4)},color=gray] (0,0)--(-0.7,-0.7)--(0,-1)--(0,0);
\filldraw[shift={(5,3)},color=gray!40] (0,0)--(-0.7,-0.7)--(-0.7,0.7)--(0,0);
\filldraw[shift={(4,2)},color=gray!70] (0,0)--(-0.7,0.7)--(-0.45, 0.9)--(0,0);
\filldraw (0,0) circle (2pt);
\filldraw (0,1) circle (2pt);
\filldraw (1,3) circle (2pt);
\filldraw (2,4) circle (2pt);
\filldraw (4,4) circle (2pt);
\filldraw (5,3) circle (2pt);
\filldraw (4,2) circle (2pt);
\draw (2.5,0.5) node {$P$};
\draw[very thick] (0,0)--(0,1)--(1,3)--(2,4)--(4,4)--(5,3)--(4,2)--(0,0);
\draw[shift={(0,0)},->, very thick] (0,0)--(1,0);
\draw[shift={(0,1)},->, very thick] (0,0)--(1,0);
\draw[shift={(0,1)},->, very thick] (0,0)--(0.9,-0.45);
\draw[shift={(1,3)},->, very thick] (0,0)--(0.9,-0.45);
\draw[shift={(1,3)},->, very thick] (0,0)--(0.7,-0.7);
\draw[shift={(2,4)},->, very thick] (0,0)--(0.7,-0.7);
\draw[shift={(2,4)},->, very thick] (0,0)--(0,-1);
\draw[shift={(4,4)},->, very thick] (0,0)--(0,-1);
\draw[shift={(4,4)},->, very thick] (0,0)--(-0.7,-0.7);
\draw[shift={(5,3)},->, very thick] (0,0)--(-0.7,-0.7);
\draw[shift={(5,3)},->, very thick] (0,0)--(-0.7,0.7);
\draw[shift={(4,2)},->, very thick] (0,0)--(-0.7,0.7);
\draw[shift={(4,2)},->, very thick] (0,0)--(-0.45,0.9);
\draw[shift={(0,0)},->, very thick] (0,0)--(-0.45,0.9);
\end{tikzpicture}
\hspace{1cm}
\begin{tikzpicture}[scale=0.8]
\filldraw[color=gray] (0,0)--(3,0)--(3,-1.5)--(0,0);
\filldraw[color=gray!70] (0,0)--(3,-1.5)--(3,-3)--(0,0);
\filldraw[color=gray!40] (0,0)--(3,-3)--(0,-3)--(0,0);
\filldraw[color=gray] (0,0)--(0,-3)--(-3,-3)--(0,0);
\filldraw[color=gray!40] (0,0)--(-3,-3)--(-3,3)--(0,0);
\filldraw[color=gray!70] (0,0)--(-3,3)--(-1.5,3)--(0,0);
\filldraw[color=gray!40] (0,0)--(3,0)--(3,3)--(-1.5,3)--(0,0);
\draw[->, color= darkgray] (-3,0) -- (3,0);
\draw[->, color= darkgray] (0,-3) -- (0,3);
\draw[->,very thick] (0,0)--(1,0) node[above] {$\eta_1$};
\draw[thick] (0,0)--(3,0);
\draw[->, very thick] (0,0)--(2,-1) node[above] {$\eta_2$};
\draw[thick] (0,0)--(3,-1.5);
\draw[->, very thick] (0,0)--(1,-1) node[below] {$\eta_3$};
\draw[thick] (0,0)--(3,-3);
\draw[->, very thick] (0,0)--(0,-1) node[left] {$\eta_4$};
\draw[thick] (0,0)--(0,-3);
\draw[->, very thick] (0,0)--(-1,-1) node[above] {$\eta_5$};
\draw[thick] (0,0)--(-3,-3);
\draw[->, very thick] (0,0)--(-1,1) node[left] {$\eta_6$};
\draw[thick] (0,0)--(-3,3);
\draw[->, very thick] (0,0)--(-1,2) node[right] {$\eta_7$};
\draw[thick] (0,0)--(-1.5,3);
\end{tikzpicture}
   \caption{Inner normals and normal fan of the polygon $P$ of  Example  \ref{running}.}  \label{figoo}
\end{figure}
{\em
The normal fan associated to this polygon has the seven maximal cones covering $\R^2$ generated by $\eta_1,\,\eta_2;$\, $\eta_2,\,\eta_3;\ldots;\, \eta_6,\eta_7;$ and $\eta_7,\,\eta_1.$ The toric variety $X_P$ is then a toric surface made by gluing seven affine spaces all of them equivalent to $\A_K^2$ except the one associated to the cone $\eta_7,\,\eta_1$ which is a quotient of this space via the action of a finite group. So it is not smooth. For more on toric surfaces, we refer the reader to Chapter $10$ of \cite{CLS11}.

Its homogeneous coordinate ring is then $S=K[x_1, x_2, x_3, x_4, x_5, x_6, x_7],$ and  one can easily verify that the variety $Z$ of \eqref{z} is equal to
$$V(x_3x_4x_5x_6x_7,\,x_1x_4x_5x_6x_7,\,x_1x_2x_5x_6x_7,\,x_1x_2x_3x_6x_7,\,x_1x_2x_3x_4x_7,\,x_1x_2x_3x_4x_5,\,x_2x_3x_4x_5x_6).
$$

By setting $Q=\{\bf0\},$ the homogenization \eqref{eFFe}  for this example is:}
$$\begin{array}{ccl}
F_1&=&  f_{10}\,x_4x_5^2+f_{11} \,x_1x_2x_7, \\
F_2&=&  f_{20}\,x_3x_4^2x_5^3x_6+f_{21}\,x_1^2x_2^3x_3^2x_4+f_{22}\,x_1x_6^2x_7^3,  \\
F_3&=& f_{30}\,x_2x_3x_4x_5^3x_6+f_{31}\,x_1^2x_2^4x_3^2+f_{32}\,x_5^2x_7^2. 
\end{array}
$$

\end{example}

\bigskip
As in the Introduction, pick $\delta\in N\otimes\Q\simeq\Q^n,$ and set
$D_\delta:=\sum_{\langle\delta,\eta_j\rangle>0}-D_j.$ Note that for any $r>0,$ we have $D_{\delta}=D_{r\delta}.$
\begin{proposition}\label{must}
For $\delta\in\Q^n,$  we have
$$H^i\big(X_{P+Q},\,\cO_{X_{P+Q}}(D_\delta+D_Q+\sum_{j\in J}D_{P_j})\big)=0\ \forall J\subset\{1,\ldots, k\} \ \forall i>0.$$
\end{proposition}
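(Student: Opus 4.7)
The plan is to apply Musta\c{t}\u{a}'s Theorem~\ref{mustata}: it suffices to exhibit a $\Q$-Cartier $\Q$-ample divisor $D$ on $X_{P+Q}$ whose round-up satisfies
\[
\lceil D \rceil + \sum_{j=1}^{n+r}(-D_j) = D_\delta + D_Q + \sum_{j \in J}D_{P_j}.
\]
Since $D_\delta+\sum_j D_j = \sum_{\langle\delta,\eta_j\rangle\leq 0} D_j$, the target simplifies to $\lceil D\rceil = D_Q+\sum_{j\in J}D_{P_j}+\sum_{\langle\delta,\eta_j\rangle\leq 0}D_j$.

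The first step is a rescaling: because $D_\delta$ depends only on the signs of the numbers $\langle\delta,\eta_j\rangle$, replacing $\delta$ by $\delta/N$ for a large positive integer $N$ leaves $D_\delta$ unchanged while forcing $|\langle\delta,\eta_j\rangle|<1$ whenever the pairing is nonzero. Next, since $P+Q$ is $n$-dimensional, its interior meets $M\otimes\Q$, and I pick a rational interior point $v$ there and set $L := D_{P+Q}+\div(\chi^v)$. This $L$ is a $\Q$-linear combination of a Cartier and a $\Q$-principal divisor, hence $\Q$-Cartier; since $L\sim_\Q D_{P+Q}$ and the latter is ample, $L$ is $\Q$-ample; and by the choice of $v$ every coefficient $\ell_j := b_j+\sum_i a_{ij}+\langle v,\eta_j\rangle$ of $L$ is strictly positive.

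My candidate is then
\[
D := D_Q+\sum_{j\in J}D_{P_j}-\div(\chi^\delta)+\mu L
\]
for a sufficiently small $\mu\in\Q_{>0}$. The divisor $-\div(\chi^\delta)$ is $\Q$-principal, so $D$ is $\Q$-Cartier and $D\sim_\Q (D_Q+\sum_{j\in J}D_{P_j})+\mu L$. The parenthesized summand is semi-ample by the discussion in Section~\ref{2.2} (being a sum of Cartier semi-ample divisors on $X_{P+Q}$) and $\mu L$ is ample; as the sum of a nef class and an ample class lies in the interior of the ample cone, $D$ is $\Q$-ample. The coefficient of $D_j$ in $D$ equals $b_j+\sum_{i\in J}a_{ij}+(\mu\ell_j-\langle\delta,\eta_j\rangle)$, and a short case analysis shows that for $\mu$ small enough the quantity $\mu\ell_j-\langle\delta,\eta_j\rangle$ lies in $(-1,0]$ when $\langle\delta,\eta_j\rangle>0$ and in $(0,1]$ otherwise, giving exactly the ceiling we want.

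The main subtlety, and what forces the perturbation by $\mu L$ with \emph{strictly positive} coefficients, is the boundary case $\langle\delta,\eta_j\rangle=0$. Without a strictly positive nudge in the $D_j$ coefficient, the corresponding ceiling would be $0$ instead of the required $1$; this is precisely why picking $v$ in the \emph{interior} of $P+Q$ (so that all $\ell_j>0$) is essential.
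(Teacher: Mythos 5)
Your proof is correct and follows essentially the same route as the paper's: both verify the hypotheses of Musta\c{t}\u{a}'s theorem (Theorem \ref{mustata}) by exhibiting a $\Q$-Cartier divisor $D$ of the form (semi-ample part $D_Q+\sum_{j\in J}D_{P_j}$) plus a small ample perturbation whose fractional coefficients reproduce the sign pattern of the $\langle\delta,\eta_j\rangle$, and both deduce $\Q$-ampleness from ``ample plus semi-ample is ample.'' The only differences are organizational: the paper encodes the signs by translating $D_P+D_Q$ by a large negative lattice multiple of $\delta$ and scaling by $1/c$, whereas you rescale $\delta$ itself, subtract $\div(\chi^\delta)$, and break the ties at $\langle\delta,\eta_j\rangle=0$ with an interior-shifted $\mu\,D_{P+Q}$.
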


\begin{proof}
Assume w.l.o.g.~that $D_\delta=\sum_{\langle\delta,\eta_j\rangle>0}-D_j=\sum_{j=1}^d -D_j$ for $1\leq d< n+r.$
Fix $J\subset\{1,\ldots, k\}.$ Recall that $D_P+D_Q$ is an ample divisor in $X_{P+Q},$
and write
\begin{equation}\label{cj}
D_P+D_Q=\sum_{j=1}^{n+r} c_jD_j.
\end{equation}
As $P+Q$ is an $n$-dimensional integral polytope, there must be a monomial in $S_{\alpha_P+\alpha_Q}$ of the form $x_1^{c_1}\ldots x_{n+r}^{c_{n+r}},$ with $c_j\geq0$ and $c_j>0$ if $\langle \delta,\eta_j\rangle=0$ (for instance, one can  pick a monomial corresponding via \eqref{kor} to any vertex of $P+Q$ not contained in a facet with inner normal $\eta_j$ such that  $\langle \delta, \eta_j \rangle = 0$). We assume w.l.o.g.~that the data $(c_j)_{1\leq j\leq n+r}$ satisfies these conditions.

For any $z\in\Z_{<0}$ such that $z\delta\in\Z^n,$ thanks to \eqref{sdg} we have that  
$D_P+D_Q=\sum_{j=1}^{n+r} (c_j+\langle z\delta, \eta_j\rangle)D_j.$ If we pick $|z|$ large enough, we can assume then w.l.o.g.~that $c_j<0$ for $j\leq d$ and $c_j>0$ otherwise. Let $c>0$ be larger than the maximum of the absolute values of all the $c_j$'s, and set
$$D:=\frac{1}{c}(D_P+D_Q)+D_Q+\sum_{j\in J}D_{P_j}.$$
Note that $D$ is $\Q$-Cartier, and we clearly have that
$$\lceil D\rceil +\sum_{j=1}^{n+r}-D_j= D_\delta+D_Q+\sum_{j\in J}D_{P_j}.
$$
The claim will follow thanks to Theorem \ref{mustata} provided that  $D$  is $\Q$-ample. To show this, note that  both $D^1:=c\left(\frac{1}{c}(D_P+D_Q)\right)$ and $D^2:=c(D_Q+\sum_{j\in J}D_{P_j})$ are the divisors associated to integral polytopes, the first of them being equal to $P+Q.$  From the properties of these divisors explained in Section \ref{2.2}, we conclude that $D^1$ is ample, and $D^2$ semi-ample in $X_{P+Q}.$ Hence, thanks to  Proposition \ref{convex},   $\varphi_{D^1}$ is strictly convex, and $\varphi_{D^2}$ convex. We deduce then that $\varphi_{D^1+D^2}$ is the sum of a strictly convex function plus a  convex function. So, it is a strictly convex function, which  implies that $D^1+D^2=c\,D$ is ample thanks again to Proposition \ref{convex}.
This concludes with the proof of the claim.
\end{proof}

\bigskip
\subsection{Sparse Resultants}\label{sr}
We review here some results on sparse resultants. We will mainly follow \cite{DS15} and \cite{DJS23}.
Given finite sets $\cA_1,\ldots, \cA_{n+1}\subset\Z^n$ of respective cardinality $r_1,\ldots, r_{n+1}$  consider the generic polynomials $f^g_1,\ldots, f^g_{n+1}$ defined in \eqref{gfi}. They are elements of $\K[t^{\pm}]$ but also of $\Z[c_{i,b}][t^{\pm1}].$

Given an algebraically closed field $K$ of characteristic zero, the {\em incidence variety} of the family \eqref{gfi} is
$$\Omega_{\cA_1,\ldots, \cA_{n+1}}=\{(\tilde{c}_{i,b},\xi):\,f^g_i(\tilde{c}_{i,b},\xi)=0,\,1\leq i\leq n+1\}\subset\P_K^{r_1-1}\times\ldots\times\P_K^{r_{n+1}-1}\times\T_{{K}}^n,
$$
which is an irreducible subvariety of codimension $n+1$ defined over $\Q.$ Denote with
\begin{equation}\label{pi}\pi:\P_K^{r_1-1}\times\ldots\times\P_K^{r_{n+1}-1}\times\T_{{K}}^n\to\P_K^{r_1-1}\times\ldots\times\P_K^{r_{n+1}-1},
\end{equation} the projection which forgets the last factor. If the Zariski closure of the image $\overline{\pi(\Omega_{\cA_1,\ldots,\cA_{n+1}})}$  is a hypersurface, the  {\it sparse eliminant} $\Elim_{\cA_1,\ldots, \cA_{n+1}}$ is defined as any irreducible polynomial in $\Z[c_{i,b}]$ giving an equation for it. Otherwise, the eliminant is set equal to $1.$  The sparse resultant, denoted by $\Res_{\cA_1,\ldots, \cA_{n+1}},$ is defined as any primitive polynomial in $\Z[c_{i,b}]$ giving an equation for the direct image $\pi_*(\Omega_{\cA_1,\ldots, \cA_{n+1}}).$

Both of these polynomials are well defined up to a sign. Moreover $\Elim_{\cA_1,\ldots, \cA_{n+1}}$ is irreducible when it is not equal to $1,$ and there exists $d\in\N$ such that $\Res_{\cA_1,\ldots, \cA_{n+1}}=\Elim_{\cA_1,\ldots, \cA_{n+1}}^d.$

The following properties  of eliminants and resultants will be of use in the text.
\begin{proposition}\label{ress} $^{}$
\begin{enumerate}
\item For $i=1,\ldots, n+1,\,\Res_{\cA_1,\ldots, \cA_{n+1}}$ is a homogeneous polynomial in the coefficients of $f^g_i$ of degree $MV(P_1,\ldots, \check{P_i},\ldots, P_{n+1}),$ where $P_j=\conv(\cA_j)$ for $j=1,\dots, n$, and $MV(\cdot )$ denotes the \emph{mixed volume} of $n$ polytopes as defined in \cite[(2.29)]{DS15}.
\item Let $f_1,\ldots, f_{n+1}\in K[t^{\pm1}]$ be such that their supports are contained in $\cA_1,\ldots, \cA_{n+1}$ respectively, and $F_1,\ldots, F_{n+1}\in S$ (resp. $F_1',\ldots, F_{n+1}'\in S$) their homogeneizations with respect to $P_1,\ldots, P_{n+1}$ via \eqref{eFFe}. If $\Elim_{\cA_1,\ldots, \cA_{n+1}}$ is not equal to $1,$ then, $\Elim_{\cA_1,\ldots, \cA_{n+1}}(f_1,\ldots, f_{n+1})=0\iff V_{X_P}(F_1,\ldots, F_{n+1})\neq\emptyset\iff V_{X_{P+Q}}(F_1',\ldots, F_{n+1}')\neq\emptyset.$
\end{enumerate}
\end{proposition}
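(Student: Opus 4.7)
My plan is to prove the two parts separately. For part (1) I would combine Bernstein's theorem with a Poisson-style specialization argument; for part (2) I would unwind the definition of $\Elim_{\cA_1,\ldots,\cA_{n+1}}$ via the incidence variety $\Omega_{\cA_1,\ldots,\cA_{n+1}}$, using completeness of $X_P$ and properness of the toric morphism $X_{P+Q}\to X_P$ discussed in Section~\ref{2.2}.

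For part (1), fix an index $i$ and specialize each $c_{j,b}$ with $j\neq i$ to generic values $\bar c_{j,b}\in K$. Bernstein's theorem guarantees that the specialized system has exactly $N:=\MV(P_1,\ldots,\check P_i,\ldots,P_{n+1})$ simple torus zeros $\xi_1,\ldots,\xi_N\in\T_K^n$. Since $\Res_{\cA_1,\ldots,\cA_{n+1}}$ is a primitive defining equation of $\pi_*(\Omega_{\cA_1,\ldots,\cA_{n+1}})$, the specialization of $\Res$ to the fixed coefficients $\bar c_{j,b}$ vanishes on $(c_{i,b})_b$ precisely when $f^g_i$ vanishes at some $\xi_k$; a Poisson-type computation then shows that, up to a nonzero scalar independent of $(c_{i,b})_b$, this specialization equals $\prod_{k=1}^N f^g_i(\xi_k)$, which is homogeneous of degree $N$ in $(c_{i,b})_{b\in\cA_i}$. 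Since the total degree of $\Res$ in these variables is invariant under generic specialization, the claim follows.

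For part (2), the equivalence $\Elim_{\cA_1,\ldots,\cA_{n+1}}(f_1,\ldots,f_{n+1})=0\iff V_{X_P}(F_1,\ldots,F_{n+1})\neq\emptyset$ follows almost by definition together with a deformation argument. If $\xi\in\T_K^n$ is a common zero of the $f_i$, then $(f_1,\ldots,f_{n+1},\xi)\in\Omega_{\cA_1,\ldots,\cA_{n+1}}$, so $(f_1,\ldots,f_{n+1})\in\pi(\Omega_{\cA_1,\ldots,\cA_{n+1}})\subset V(\Elim_{\cA_1,\ldots,\cA_{n+1}})$, and $\xi$ corresponds to a torus point of $V_{X_P}(F_1,\ldots,F_{n+1})$. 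If a common zero $x_0$ lies on the boundary of $X_P$, I would exhibit a one-parameter family of perturbations $(f_1(s),\ldots,f_{n+1}(s))\to(f_1,\ldots,f_{n+1})$ whose generic torus zeros converge to $x_0$ in the complete variety $X_P$, placing $(f_1,\ldots,f_{n+1})$ in $\overline{\pi(\Omega_{\cA_1,\ldots,\cA_{n+1}})}=V(\Elim_{\cA_1,\ldots,\cA_{n+1}})$. Conversely, if the eliminant vanishes, I would lift a generic curve through $(f_1,\ldots,f_{n+1})$ in $\overline{\pi(\Omega_{\cA_1,\ldots,\cA_{n+1}})}$ to $\Omega_{\cA_1,\ldots,\cA_{n+1}}$ and take the limit of its torus coordinate in the complete variety $X_P$ to produce a zero of the $F_i$. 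The equivalence with $V_{X_{P+Q}}(F_1',\ldots,F_{n+1}')\neq\emptyset$ then follows from the proper surjective toric morphism $\mu\colon X_{P+Q}\to X_P$: $\mu$ is an isomorphism on the dense torus, and the factorizations in \eqref{eFFe} identify the torus zeros of $F_i$ and $F_i'$; by properness of $\mu$, any boundary zero in $X_P$ also has a preimage in $X_{P+Q}$ which is a zero of the $F_i'$.

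The main obstacle I expect is making rigorous the boundary case of part~(2), namely realising a boundary zero in $X_P$ as a limit of torus zeros of nearby systems. This requires working with the torus-orbit stratification of $X_P$ and the initial forms of the $f_i$ along the cone of $\Sigma_P$ corresponding to $x_0$, together with the completeness of $X_P$ to guarantee that the limit actually lies in $V_{X_P}(F_1,\ldots,F_{n+1})$.
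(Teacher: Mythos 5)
Your proposal is correct in outline but takes a genuinely different route: the paper proves both parts essentially by citation, deducing (1) from Proposition~3.4 of \cite{DS15} and (2) from Proposition~3.2 and (2.14) of \cite{DS15} together with the proper toric morphism $X_P\to\P^{r_1-1}\times\ldots\times\P^{r_{n+1}-1}$ onto the multiprojective toric variety $X_{\cA_1,\ldots,\cA_{n+1}}$ of \cite{DS15} (and the analogous morphism from $X_{P+Q}$), whereas you re-derive the cited facts from scratch. Your argument for (1) is precisely the Poisson-formula strategy of \cite{DS15}: it is sound, but be aware that the step ``the specialization of $\Res$ equals $\prod_k f^g_i(\xi_k)$ up to a nonzero scalar'' --- i.e.\ that each of the $\MV(P_1,\ldots,\check{P_i},\ldots,P_{n+1})$ linear factors occurs with multiplicity exactly one in the specialized resultant --- is the entire content of the cited result, so invoking ``a Poisson-type computation'' leaves the real work undone. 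For (2), your identification of the boundary case as the crux is right, and your deformation claim amounts to the assertion that the incidence variety $W=\{(c,x):F_i(c,x)=0\ \forall i\}\subset\prod_i\P^{r_i-1}\times X_P$ coincides with the closure of its torus part $\Omega_{\cA_1,\ldots,\cA_{n+1}}$; this is true, but the clean way to see it is not via initial forms but via a fiber-dimension count: over a point of a codimension-$k$ orbit of $X_P$ the fiber of $W$ is a product of hyperplanes (each $F_i(\cdot,x)$ is a nonzero linear form in $c_i$ because $\cA_i$ meets every face of $P_i=\conv(\cA_i)$), so the part of $W$ over that orbit has dimension $\dim\Omega-k<\dim\Omega$, while every irreducible component of $W$ has dimension at least $\dim\Omega$; hence every component meets the torus locus and $W=\overline{\Omega}$. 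Combined with properness of $\pi$ on $\prod_i\P^{r_i-1}\times X_P$ (which gives $\pi(W)=\overline{\pi(\Omega)}=V(\Elim_{\cA_1,\ldots,\cA_{n+1}})$ under the hypothesis $\Elim_{\cA_1,\ldots,\cA_{n+1}}\neq 1$) and with the properness and torus-equivariance of $X_{P+Q}\to X_P$ for the last equivalence, this closes your argument. In short: the paper's proof buys brevity at the cost of outsourcing everything to \cite{DS15}; yours is self-contained modulo the two points above, which you should make explicit.
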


\begin{proof} $^{}$
\begin{enumerate}
\item Follows from Proposition $3.4$ in \cite{DS15}.
\item If  $\overline{\pi(\Omega_{\cA_1,\ldots,\cA_{n+1}})}$ is a hypersurface, then we have from Proposition 3.2 in \cite{DS15}  and (2.14) in loc. cit. that $\Elim_{\cA_1,\ldots, \cA_{n+1}}(f_1,\ldots, f_{n+1})=0$ if and only if the ``homogenization'' of this system to  the toric multiprojective variety $X_{\cA_1,\ldots, \cA_{n+1}}$ defined in \cite[(2.17)]{DS15}, has a common zero in this set. As there is a proper morphism of toric varieties $\phi:X_P\to\P^{r_1-1}\times\ldots\times\P^{r_{n+1}-1}$ satisfying $\phi(X_P)=X_{\cA_1,\ldots,\cA_{n+1}}$  (cf. Lemma $2.6$ in \cite{DS15}), the claim follows straightforwardly for $X_P$ by noting that the $F_i$'s are global sections of divisors in $X_P$ corresponding to the homogenizations of the $f_i$'s in $X_{\cA_1,\ldots, \cA_{n+1}}$. The same argument applies to $X_{P+Q}.$
\end{enumerate}
\end{proof}

\begin{remark}
There is a combinatorial criteron on $\cA_1,\ldots, \cA_{n+1}$ to detect when $ \Elim_{\cA_1,\ldots, \cA_{n+1}}=1,$ see Proposition $3.8$ in \cite{DS15} for instance.
\end{remark}

\medskip
\subsection{Determinants of complexes}\label{detc}
We present here the notion of determinants of exact and generically exact complexes, and some of their properties which will be of use to prove Theorem \ref{csparse}.
We will follow mainly the presentation given in \cite[Appendix A]{GKZ94}, see also \cite{dem84,cha93} for more on this subject.

Let $K$ be any field. A {\em complex} of  $K$-vector spaces is a graded vector space $\bigoplus_{i\in\Z} V_i$ together with $K$-linear maps $
d_i:V_i\to V_{i-1}$ satisfying $d_i\circ d_{i+1}=0.$
We will assume that all but finitely many of the $V_i$'s are different from zero.

For an $n$-dimensional $K$-vector space $V$ of dimension $n$, we define $\mbox{Det}(V)$ as the one-dimensional vector space $\wedge^nV$, and for $V=0$, we set $\mbox{Det}(0)=K.$  Given a one-dimensional vector space $V$, we denote with $V^{-1}$ its dual space $V^*.$
The {\em determinantal vector space} of a finite-dimensional complex $\bigoplus_{i\in\Z} V_i$  of vectors spaces having only finitely many of the $V_i$'s  different from zero is the one-dimensional $K$-vector space defined as  $\bigotimes _i \mbox{Det}(V_i)^{\otimes (-1)^i}.$ The cohomology spaces of a complex form another graded vector space and there is a natural isomorphism between the determinantal vector spaces associated to both (see \cite[Proposition $3$, Appendix A]{GKZ94}).
If the complex is exact,  its determinantal vector space can be naturally identified with $K$ (\cite[Corollary $6$, Appendix A]{GKZ94}). To make explicit this identification, one chooses bases $B_i$ of all the nontrivial $V_i$'s and, as in the linear case, defines the {\em determinant of $\bigoplus_{i\in\Z} V_i$ with respect to the system of bases $\{B_i\}$}
as the element of $K^\times$ which is the image under this natural identification  of the element $\bigotimes_i \mbox{Det}(B_i)^{(-1)^i} \in \mbox{Det}(\bigoplus_{i\in\Z} V_i)$, where $\mbox{Det}(B_i)$ is the wedge product of all the elements in $B_i,$
 (cf. \cite[Definition $7$, Appendix A]{GKZ94}). In the case corresponding to only two nontrivial vector spaces $0\to V_2\stackrel{d_2}{\to} V_1\to0,$ this notion coincides with the determinant of the matrix of $d_2$ in the bases $B_2$ and $B_1.$

In general, to compute this determinant once the bases are fixed, we can use the so-called ``Cayley method'': suppose that
\begin{equation}\label{fcs}
0\to V_r\stackrel{d_r}{\to} V_{r-1}\stackrel{d_{r-1}}{\to}\ldots\stackrel{d_2}{\to} V_1\stackrel{d_1}{\to} V_0\to0
\end{equation}
is an exact complex of finite-dimensional vector spaces $V_0,\dots, V_r$ with bases $B_0,\ldots, B_r$ respectively and,  for $i=1,\dots, r$, let $M_i$ be the matrix of $d_i$ with respect to the bases $B_i$ and $B_{i-1}$.  A family of subsets $I_i\subset B_i,$ $i=0,\dots, r$ is called {\em admissible} (\cite[Definition $12$, Appendix A]{GKZ94}) if $I_0=B_0,\, I_r=\emptyset,$ and
for any $i=1,\ldots, r,$ we have $\#(B_{i}\setminus I_{i})=\#(I_{i-1}),$ and the submatrix of $M_i$ having its columns indexed by $B_{i}\setminus I_{i}$, and its rows indexed by $I_{i-1}$ is invertible. We have that admissible sets do exist (\cite[Proposition $13$, Appendix A]{GKZ94}), and moreover:
\begin{proposition}[Theorem 14, Appendix A in \cite{GKZ94}]\label{alt}
For any admissible collection $(I_i)_{i=0,\ldots, r}$, for $i=1,\dots, r$, denote with $\Delta_i$ the determinant of the submatrix of $M_{i}$ having its columns indexed by $B_{i}\setminus I_{i}$, and its rows indexed by $I_{i-1}.$ Then, the determinant of \eqref{fcs} with respect to the bases $B_0,\ldots, B_r$ is equal to
$$\prod_{i=1}^{r} \Delta_i^{(-1)^{i+1}}.
$$
\end{proposition}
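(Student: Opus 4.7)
The plan is to prove the formula by induction on the length $r$ of the complex, exploiting the admissibility condition at the top of the complex to peel off the two uppermost terms and reduce to a shorter complex.

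For the base case $r=1$, the exact complex $0\to V_1\xrightarrow{d_1} V_0\to0$ forces $d_1$ to be an isomorphism. With $I_0=B_0$ and $I_1=\emptyset$, the submatrix extracted is the entire matrix $M_1$, and $\Delta_1=\det(M_1)$ is by definition the determinant of the complex with respect to the bases $B_0,B_1$. For the inductive step, the key geometric observation is that the invertibility of $M_r^{I_{r-1},B_r}$ means that the composition $V_r\xrightarrow{d_r}V_{r-1}\xrightarrow{\pi_{I_{r-1}}}\mathrm{span}(I_{r-1})$ is an isomorphism. Together with the dimension identity $\#B_r=\#I_{r-1}$ (which ensures matching dimensions), this yields the direct sum decomposition
\[
V_{r-1}\;=\;\mathrm{im}(d_r)\,\oplus\,\mathrm{span}(B_{r-1}\setminus I_{r-1}),
\]
where injectivity of $d_r$ (from exactness) guarantees $\dim\mathrm{im}(d_r)=\#B_r$.

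Using $\mathrm{im}(d_r)=\ker(d_{r-1})$, the map $d_{r-1}$ factors through $V_{r-1}/\mathrm{im}(d_r)\simeq\mathrm{span}(B_{r-1}\setminus I_{r-1})$, and one obtains a shorter exact complex
\[
0\to\mathrm{span}(B_{r-1}\setminus I_{r-1})\xrightarrow{\widetilde{d}_{r-1}}V_{r-2}\to\cdots\to V_0\to0,
\]
whose differential $\widetilde{d}_{r-1}$ is represented in the bases $B_{r-1}\setminus I_{r-1}$ and $B_{r-2}$ by the submatrix of $M_{r-1}$ consisting of the columns indexed by $B_{r-1}\setminus I_{r-1}$. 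I would then verify that the truncated collection $(I_0,\ldots,I_{r-2},\emptyset)$ is admissible for this reduced complex: the conditions at levels $i\le r-2$ are inherited verbatim, while the condition at the new top level $r-1$ becomes precisely the original admissibility at level $r-1$, since the minor extracted there uses rows $I_{r-2}$ and columns $B_{r-1}\setminus I_{r-1}$. Hence, by the inductive hypothesis, the determinant of the reduced complex equals $\prod_{i=1}^{r-1}\Delta_i^{(-1)^{i+1}}$.

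To close the argument, I would invoke the multiplicativity of the determinantal line under short exact sequences of complexes applied to
\[
0\to C_{\mathrm{top}}\to C\to C_{\mathrm{red}}\to0,
\]
where $C_{\mathrm{top}}$ is the two-term complex $V_r\xrightarrow{d_r}\mathrm{im}(d_r)$ placed in degrees $r$ and $r-1$, and $C_{\mathrm{red}}$ is the reduced complex above. Choosing the basis of $\mathrm{im}(d_r)$ to be the preimage (under the iso $\pi_{I_{r-1}}|_{\mathrm{im}(d_r)}$) of the basis $I_{r-1}$ of $\mathrm{span}(I_{r-1})$ makes the matrix of $d_r\colon V_r\to\mathrm{im}(d_r)$ coincide with the submatrix $M_r^{I_{r-1},B_r}$, whose determinant is $\Delta_r$. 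The induced decomposition $V_{r-1}\cong\mathrm{im}(d_r)\oplus\mathrm{span}(B_{r-1}\setminus I_{r-1})$ realises the basis $B_1$ as triangular (with unit diagonal) in an adapted basis, so the change-of-basis determinant is trivial. The sign contribution from the position of $C_{\mathrm{top}}$ within the degree grading converts the factor $\Delta_r$ into $\Delta_r^{(-1)^{r+1}}$, completing the inductive step.

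The main obstacle I anticipate is the sign and basis bookkeeping in the final step: one must verify that the change of basis from $B_1$ to the adapted basis (preimages of $I_{r-1}$ together with $B_{r-1}\setminus I_{r-1}$) is unimodular after rearranging, and that the tensor decomposition $\mathrm{Det}(V_{r-1})^{\otimes(-1)^{r-1}}\simeq\mathrm{Det}(\mathrm{im}(d_r))^{\otimes(-1)^{r-1}}\otimes\mathrm{Det}(\mathrm{span}(B_{r-1}\setminus I_{r-1}))^{\otimes(-1)^{r-1}}$ matches with $\mathrm{Det}(V_r)^{\otimes(-1)^r}\otimes\mathrm{Det}(\mathrm{im}(d_r))^{\otimes(-1)^{r-1}}$ contributing exactly $\Delta_r^{(-1)^{r+1}}$ in the product, once one uses the canonical pairing from injectivity of $d_r$. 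Everything else is formal manipulation via the functoriality of $\mathrm{Det}$ on exact sequences recalled at the beginning of Section \ref{detc}.
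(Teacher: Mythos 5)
The paper does not actually prove this proposition: it is quoted directly as Theorem 14 of Appendix A in \cite{GKZ94}, so there is no internal argument to compare yours against. Your inductive peeling argument is a correct, essentially self-contained proof of the cited result, and it is close in spirit to how such statements are standardly established: the admissibility condition at the top level gives the splitting $V_{r-1}=\mbox{Im}(d_r)\oplus\langle B_{r-1}\setminus I_{r-1}\rangle$, the complex breaks into a two-term piece and a shorter exact complex for which the truncated collection remains admissible, and multiplicativity of $\mbox{Det}$ does the rest (this is also the mechanism behind the unnumbered proposition the paper proves immediately afterwards to justify the descending method). Three small remarks. First, you still import from the same Appendix A the multiplicativity of the determinantal line under short exact sequences of complexes and the identification $\mbox{Det}(C)\simeq K$ for exact $C$, so the proof is ``from scratch'' only relative to those facts; that is consistent with the level of rigor of Section \ref{detc}. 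Second, the stated equality can only hold up to sign unless compatible orderings of the bases are fixed (both the determinant of the complex and your unipotent change of basis on $V_{r-1}$ are sensitive to reordering); the paper's statement silently drops the $\pm$, and all of its applications (e.g.\ Theorem \ref{csparse}) are anyway only up to sign or up to a rational factor, so the sign bookkeeping you flag is not a genuine gap. Third, ``$B_1$'' in your penultimate paragraph should read $B_{r-1}$.
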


We will be interested in computing the determinant of complexes of $K$-vector spaces once these bases are fixed by using the  ``descending method'' (\cite{dem84}, see also \cite{DJ05}), which consists of the following algorithm:
\begin{enumerate}
\item Set $I_0=B_0.$
\item For $i=1,\ldots, r,$ choose $I_{i}$ as any subset of $B_{i}$ such that the submatrix of $M_{i}$ indexed by the columns in $B_{i}\setminus I_{i}$ and the rows in $I_{i-1}$ is of maximal rank.
\end{enumerate}
As the complex is exact, we deduce straightforwardly that if this algorithm finishes successfully, then $I_r=\emptyset.$

The fact that the algorithm succeeds in producing an admissible sequence follows from the following claim.

\begin{proposition}
For $i=0,\dots, r$, the set $I_i$ obtained by the above algorithm satisfies $\ker(d_{i}) \oplus \langle B_{i}\setminus I_i\rangle =V_i$.
\end{proposition}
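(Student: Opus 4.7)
The plan is to prove the claim by induction on $i$. The base case $i = 0$ is immediate from the definition $I_0 = B_0$: then $\langle B_0 \setminus I_0 \rangle = 0$, while $V_0$, being the last nontrivial term of the complex, satisfies $\ker(d_0) = V_0$, so the desired decomposition $\ker(d_0) \oplus \langle B_0 \setminus I_0 \rangle = V_0$ holds trivially.

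For the inductive step, assume $\ker(d_{i-1}) \oplus \langle B_{i-1} \setminus I_{i-1}\rangle = V_{i-1}$. The key move is to transport the problem across $d_i$ via a suitable projection. Since the complex is exact, $\operatorname{im}(d_i) = \ker(d_{i-1})$; combined with the coordinate decomposition $V_{i-1} = \langle I_{i-1}\rangle \oplus \langle B_{i-1} \setminus I_{i-1}\rangle$, the projection $\pi \colon V_{i-1} \to \langle I_{i-1}\rangle$ along $\langle B_{i-1} \setminus I_{i-1}\rangle$ restricts by the inductive hypothesis to an isomorphism $\ker(d_{i-1}) \stackrel{\sim}{\to} \langle I_{i-1}\rangle$. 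I would then analyse the composition $\pi \circ d_i \colon V_i \to \langle I_{i-1}\rangle$, whose matrix in the bases $B_i$ and $I_{i-1}$ is exactly the row-restricted submatrix of $M_i$ indexed by $I_{i-1}$. Two observations follow at once: its kernel equals $\ker(d_i)$, because any $v$ with $d_i(v) \in \ker(\pi) = \langle B_{i-1}\setminus I_{i-1}\rangle$ must have $d_i(v) \in \operatorname{im}(d_i) \cap \langle B_{i-1}\setminus I_{i-1}\rangle = \ker(d_{i-1}) \cap \langle B_{i-1}\setminus I_{i-1}\rangle = 0$; and it is surjective onto $\langle I_{i-1}\rangle$ since $\pi$ already restricts to an isomorphism on $\operatorname{im}(d_i)$.

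It now follows that $\rank(M_i^{I_{i-1}}) = |I_{i-1}|$, so the algorithm's requirement that the column-and-row-restricted submatrix $M_i^{B_i\setminus I_i,\,I_{i-1}}$ be of maximal rank forces $|B_i\setminus I_i| = |I_{i-1}|$ with this square submatrix invertible, and the columns of $M_i^{I_{i-1}}$ indexed by $B_i\setminus I_i$ then form a basis of $\langle I_{i-1}\rangle$. Equivalently, $(\pi \circ d_i)|_{\langle B_i \setminus I_i\rangle}$ is an isomorphism onto $\langle I_{i-1}\rangle$, which yields $\langle B_i \setminus I_i\rangle \cap \ker(d_i) = \langle B_i \setminus I_i\rangle \cap \ker(\pi\circ d_i) = 0$; since $\dim \ker(d_i) = \dim V_i - |I_{i-1}| = |I_i|$, the two subspaces together fill $V_i$. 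The main obstacle I expect is pinning down the precise interpretation of "maximal rank" inside the algorithm: the rank computation just established is exactly what guarantees the algorithm can select an invertible square submatrix of the correct size $|I_{i-1}|$ at each stage, so the induction keeps going and terminates at step $r$ with $I_r = \emptyset$, as required for admissibility.
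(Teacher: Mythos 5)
Your proof is correct and follows essentially the same route as the paper's: the same projection $\pi_{i-1}$ onto $\langle I_{i-1}\rangle$ along $\langle B_{i-1}\setminus I_{i-1}\rangle$, the same identification $\ker(\pi_{i-1}\circ d_i)=\ker(d_i)$ via exactness and the inductive hypothesis, and the same surjectivity and dimension-count conclusion. No gaps.
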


\begin{proof}
We proceed by induction. The identity holds trivially for $i=0$.

Assume $i>0$ and $\ker(d_{i-1}) \oplus \langle B_{i-1}\setminus I_{i-1}\rangle =V_{i-1}$. Let $\pi_{i-1}: V_{i-1} \to \langle I_{i-1}\rangle$ be the projection with kernel $\langle B_{i-1}\setminus I_{i-1}\rangle$
and  $\tilde d_i=\pi_{i-1} \circ d_i: V_i \to \langle I_{i-1}\rangle$. Note that the matrix of $\tilde d_i$ with respect to the bases $B_i$ and $I_{i-1}$ is the submatrix of $M_i$ with rows indexed by $I_{i-1}$. We have $\ker(\tilde d_i) =\ker(d_i)$, since $\mbox{Im}(d_i) \cap \langle B_{i-1}\setminus I_{i-1}\rangle = \ker (d_{i-1}) \cap \langle B_{i-1}\setminus I_{i-1}\rangle =\{0\}$ because of the induction assumption and the fact that the complex is exact. Then, $\dim(\mbox{Im}(\tilde d_i)) =\dim(\mbox{Im}(d_i))= \dim(\ker (d_{i-1}))= \# I_{i-1}$ and $\tilde d_i$ is onto.
We conclude that the algorithm chooses a subset $I_i$ of $B_i$ such that $\#(B_i \setminus I_i) = \# I_{i-1}$ and the submatrix of $\tilde d_i$ with columns indexed by $B_i \setminus I_i$ is invertible. In particular, $ \ker(\tilde d_i) \cap \langle B_i \setminus I_i\rangle = \{0\}$ and, comparing dimensions, we deduce that $\ker(d_{i}) \oplus \langle B_{i}\setminus I_i\rangle = V_i$.
\end{proof}

Let now $R=A[y_1,\ldots, y_s]$ be a ring of polynomials in the indeterminates $y_1,\ldots, y_s,$ with coefficients in a noetherian integral domain $A$, such that $R$ is factorial and regular (i.e. every projective $R$-module has a projective finite resolution). Denote with $K(R)$ its field of fractions.  A finite {\em generically exact complex} of $R$-modules of finite rank is a complex of free $R$-modules
\begin{equation}\label{cfx}
0\to \mathbf{M}_r\stackrel{d_r}{\to} \mathbf{M}_{r-1}\stackrel{d_{r-1}}{\to}\ldots\stackrel{d_1}{\to} \mathbf{M}_0\to0
\end{equation}
such that their tensorization with $K(R)$ makes the resulting complex of finite dimensional $K(R)$-vector spaces exact. Given a system of $R$-bases $B_0,\dots, B_r$ of the free $R$-modules involved, the determinant of this complex is then defined as the determinant of  \eqref{cfx} tensored with $K(R).$  It is an element of $K(R)^\times.$
In some situations, one can compute it as the $\gcd$ of the maximal minors of the matrix of the map $d_1$ in the corresponding bases. To state the conditions ensuring that such property holds,
as usual we define the {\em cohomology modules} of  \eqref{cfx} as $H^i(\bigoplus_j \mathbf{M}_j):=\ker(d_{i})/ \mbox{Im}(d_{i+1}), i\in\Z$.
We will also need the notion of \emph{multiplicity} of a finitely generated $R$-module $\mathbf{M}$ along a prime ideal $\mathfrak{p}$ of $R$, which is defined, via localization at $\mathfrak{p}$, as $\mbox{mult}_{\mathfrak{p}}(\mathbf{M}) = \sum_{\ell\ge 0}
\dim_{k_\mathfrak{p}} \big(\mathfrak{m}_{\mathfrak{p}}^\ell \mathbf{M}_\mathfrak{p}/\mathfrak{m}_{\mathfrak{p}}^{\ell+1} \mathbf{M}_\mathfrak{p}\big)$ if the sum is finite or $0$ otherwise. Here, $\mathfrak{m}_\mathfrak{p}$ denotes the maximal ideal of the local ring $R_\mathfrak{p}$ and $k_\mathfrak{p}= R_{\mathfrak{p}}/\mathfrak{m}_\mathfrak{p}$ its residual field.
We have the following result.

\begin{theorem}\cite[Theorem $34$, Appendix A]{GKZ94}\label{mult}
If for any prime element $p\in R,$  the multiplicity of $H^i(\bigoplus_j \mathbf{M}_j)$ along the ideal $\langle p\rangle$ of $R$ is equal to zero $\forall i>0$, then --up to an invertible element in $R$--  the determinant of \eqref{cfx} with respect to the bases $B_0,\ldots, B_r$ is equal to the $\gcd$ of the maximal minors of the matrix of $d_1$ in the bases $B_1$ and $B_0$.
\end{theorem}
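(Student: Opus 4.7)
Since $R$ is factorial, an element of $K(R)^\times$ is a unit of $R$ iff its valuation at every prime element $p\in R$ vanishes; hence two elements of $K(R)^\times$ differ by a unit of $R$ iff they have the same $p$-adic valuation for every prime $p$. Writing $D\in K(R)^\times$ for the determinant of \eqref{cfx} in the bases $B_0,\dots,B_r$ and $g\in R$ for the gcd of the maximal minors of $d_1$ in the bases $B_1,B_0$, the plan is to show $v_p(D)=v_p(g)$ for every prime $p\in R$. Fix such a $p$ and localize \eqref{cfx} at $\langle p\rangle$. Because $R$ is regular, $R_p$ is a DVR with uniformizer $p$; localization commutes with cohomology, the bases $B_\bullet$ remain $R_p$-bases, and the determinant and the gcd of minors localize to the corresponding invariants of $(\mathbf{M}_\bullet)_{\langle p\rangle}$, which are tracked by $v_p(D),v_p(g)$. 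Moreover, $\mathrm{mult}_{\langle p\rangle}(\mathbf{M})=\mathrm{length}_{R_p}(\mathbf{M}\otimes_R R_p)$ for any finitely generated $R$-module $\mathbf{M}$. The statement thus reduces to a claim over the DVR $R_p$.

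Over $R_p$, the Smith normal form applied to the matrix of $d_1$ yields $R_p$-invertible matrices $A,B$ with $A^{-1}d_1B=[\operatorname{diag}(p^{e_1},\dots,p^{e_{n_0}})\mid 0]$, where $n_0=\mathrm{rank}(\mathbf{M}_0)$ and each $e_i$ is a nonnegative integer, since generic exactness at position $0$ makes $d_1\otimes K(R_p)$ surjective. The gcd of maximal minors of a matrix is preserved (up to units) under invertible basis changes, being a generator of the $0$-th Fitting ideal of the cokernel, so
$$
v_p(g)\;=\;\textstyle\sum_{i=1}^{n_0}e_i\;=\;\mathrm{length}_{R_p}\bigl(\mathrm{coker}(d_1\otimes R_p)\bigr)\;=\;\mathrm{length}_{R_p}(H^0(\mathbf{M}_\bullet)\otimes R_p).
$$

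The heart of the argument is the alternating Euler-characteristic identity
$$
v_p(D)\;=\;\sum_{i\ge 0}(-1)^i\,\mathrm{length}_{R_p}\bigl(H^i(\mathbf{M}_\bullet)\otimes R_p\bigr),
$$
which I would prove by induction on the length $r$ of the complex. After putting $d_1$ in Smith form, each column with $e_i=0$ sends a basis vector of $\mathbf{M}_1$ by the identity to a basis vector of $\mathbf{M}_0$; the relation $d_1 d_2=0$ forces such a basis vector of $\mathbf{M}_1$ to lie outside $\mathrm{Im}(d_2)$, so the two-term piece $R_p\xrightarrow{\,1\,}R_p$ splits off as a direct-summand subcomplex of \eqref{cfx}, contributing the factor $1$ to the determinant and $0$ to every cohomology length. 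After finitely many such reductions all entries of $d_1$ lie in $\langle p\rangle$, and in Proposition \ref{alt} one may take $I_1$ to be the basis of the zero block (i.e., of $\ker(d_1)$ in the normalized form), so $\Delta_1=p^{e_1+\cdots+e_{n_0}}$; the remaining product $\prod_{i\ge 2}\Delta_i^{(-1)^{i+1}}$ is exactly the determinant of the truncated generically exact complex $\mathbf{M}_r\to\cdots\to\mathbf{M}_2\to\ker(d_1)\to 0$, whose cohomology at position $i-1$ equals the original $H^i(\mathbf{M}_\bullet)$ for $i\ge 1$. The induction hypothesis applied to this shorter complex, together with $v_p(\Delta_1)=\sum e_i=\mathrm{length}_{R_p}(H^0\otimes R_p)$, delivers the alternating sum.

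Combining the two computations, the hypothesis $\mathrm{mult}_{\langle p\rangle}(H^i)=0$ for $i>0$ gives $v_p(D)=\mathrm{length}_{R_p}(H^0\otimes R_p)=v_p(g)$ at every prime $p$, so $D/g\in R^\times$ as required. The main obstacle is the alternating-sum identity for $v_p(D)$: one must track how the basis changes that normalize $d_1$ transform the subsequent differentials $d_2,\dots,d_r$, verify that the trivial $R_p\xrightarrow{1}R_p$ pairs split off as direct summands of the \emph{entire} complex (not only of the two-term piece involving $d_1$), and match the cohomology of the truncated complex with that of the original so that the induction closes. Once this bookkeeping is handled, the remaining arguments are straightforward linear algebra over a DVR.
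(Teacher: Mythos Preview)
The paper does not give its own proof of this statement: it is quoted verbatim from \cite[Theorem~34, Appendix~A]{GKZ94} and used as a black box in the proof of Proposition~\ref{gcd}. So there is no in-paper argument to compare against.

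Your proposal is correct and in fact reconstructs the line of argument in \cite{GKZ94}. There the alternating-sum identity
\[
v_p(D)=\sum_{i\ge 0}(-1)^i\,\mathrm{length}_{R_{\langle p\rangle}}\bigl(H^i(\mathbf{M}_\bullet)\otimes R_{\langle p\rangle}\bigr)
\]
is isolated as a separate result (Theorem~30 in Appendix~A), and Theorem~34 is then deduced from it together with the identification of the gcd of maximal minors of $d_1$ with a generator of the $0$-th Fitting ideal of $H^0$, exactly as you do. Your inductive proof of the alternating-sum formula via Smith normal form over the DVR $R_{\langle p\rangle}$ is the standard route; the splitting-off of the trivial summands $R_{\langle p\rangle}\xrightarrow{\,1\,}R_{\langle p\rangle}$ does extend to the whole complex because $\mathrm{Im}(d_2)\subseteq\ker(d_1)$ lies entirely in the complementary direct summand of $\mathbf{M}_1$ once $d_1$ is in Smith form, so the bookkeeping you flag as the main obstacle goes through. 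Likewise, with $I_1$ taken to be the Smith-form basis of $\ker(d_1)$, the admissible collection $(I_i)_{i\ge 1}$ for the original complex is precisely an admissible collection for the truncation $\mathbf{M}_r\to\cdots\to\mathbf{M}_2\to\ker(d_1)\to 0$, so $\prod_{i\ge 2}\Delta_i^{(-1)^{i+1}}$ is indeed its determinant.

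One small quibble: the sentence ``because $R$ is regular, $R_{\langle p\rangle}$ is a DVR'' is not the cleanest justification. What you actually need is that $R$ is a noetherian factorial domain: then $\langle p\rangle$ is a height-one prime, and $R_{\langle p\rangle}$ is a one-dimensional noetherian local domain with principal maximal ideal, hence a DVR. The regularity hypothesis in the paper's sense is not required for this step.
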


\bigskip
\section{Proof of Theorems \ref{g1} and \ref{spr}}\label{sec: proofthm1}

To prove Theorem \ref{g1}, assume first that $K$ is algebraically closed. \par
\noindent{$(1)\implies (2)$}
Suppose first that $V_{X_{P}}(F_1,\ldots, F_k)=\emptyset,$ which is equivalent to  having  $V_{X_{P+Q}}(F_1',\ldots, F_k')=\emptyset.$
As $F'_i$ is a global section of $\cO_{X_{P+Q}}(D_{P_i}),\, 1\leq i\leq k,$ we get the Koszul complex of sheaves
 \begin{equation}\label{cx1}
\begin{array}{r}
0\to\cO_{X_{P+Q}}\big(-\sum_{1\leq i\leq k} D_{P_i}\big)\to \ldots \to\bigoplus_{|J|=\ell} \cO_{X_{P+Q}}(-\sum_{j\in J}D_{P_j})\to\ldots\\[2mm]
\dots \to\bigoplus_{1\leq i\leq k} \cO_{X_{P+Q}}(-D_{P_i})\to\cO_{X_{P+Q}}\to0
\end{array}
\end{equation}
which is exact since each $\cO_{X_{P+Q}}(D_{P_i})$ is locally free and $F_1',\ldots, F_k'$ define the empty variety in $X_{P+Q}.$

If we now tensor \eqref{cx1} with  $\cO_{X_{P+Q}}(D_\delta+D_Q+D_P),$ and use that  each of the divisors appearing in \eqref{cx1} is Cartier, we get a new complex
\begin{equation}\label{cx2}
\begin{array}{r}
0\to\cO_{X_{P+Q}}\big(D_\delta+D_Q\big)\to   \ldots \to\bigoplus_{|J|=\ell} \cO_{X_{P+Q}}(D_\delta+D_Q+\sum_{j\notin J}D_{P_j})\to\ldots
\\[2mm]
\ldots \to\bigoplus_{1\le i \le k} \cO_{X_{P+Q}}(D_\delta+D_Q+D_P-D_{P_i})\to\cO_{X_{P+Q}}(D_\delta+D_Q+D_P)\to0
\end{array}
\end{equation}
which is also exact because all sheaf Tor groups vanish when one of the factors is locally free, which is the case with the factors of \eqref{cx1}, see the proof of Theorem 2.2 in \cite{CD05} for more. Thanks to Proposition \ref{must}, we have that, for $j>0,$
 and $J\subset\{1,\ldots, k\},$ the following cohomology vanish:
$$H^j \Big(X_{P+Q},\bigoplus_{J}\cO_{X_{P+Q}}\big(D_\delta+D_Q+\sum_{i\notin J} D_{P_i}\big)\Big)\simeq
\bigoplus_{J} H^j \Big(X_{P+Q},\cO_{X_{P+Q}}\big(D_\delta+D_Q+\sum_{i\notin J} D_{P_i}\big)
\Big).
$$
This implies straightforwardly that taking the global sections of \eqref{cx2} preserves exactness. We use \eqref{kor} to identify
$H^0 \left(X_{P+Q},\cO_{X_{P+Q}}\big(D_\delta+D_Q+\sum_{i\notin J} D_{P_i}\big)\right)$ with the elements in $S'$ of degree $\alpha_\delta+\alpha_Q+\sum_{i\notin J}\alpha_{P_i},$ so we have that the following complex of $K$-vector spaces is exact:
\begin{equation}\label{cx3}
0\to S'_{\alpha_\delta+\alpha_Q}\to \ldots \to\bigoplus_i  S'_{\alpha_\delta+\alpha_Q+\alpha_P-\alpha_{P_i}}\to S'_{\alpha_\delta+\alpha_Q+\alpha_P}\to0.
\end{equation}
This is the $(\alpha_\delta+\alpha_Q+\alpha_P)-$graded piece of the Koszul complex  $Kos(S',F_1',\ldots, F'_k),$ which proves the ``if'' part.

\noindent{$(2)\implies (3)$} Trivial.

\noindent{$(3)\implies (1)$} Let $f_1^g,\ldots, f_k^g$ be the generic polynomials defined in \eqref{gfi}, and denote with $F_1^g{'},\ldots, F_ k^g{'}$ their respective homogenizations in $S'_\K$ with respect to $P_1,\ldots, P_k.$ For a particular specialization of coefficients
$c_{i,a}\mapsto y_{i,a}$ in $K,$ we abbreviate with $F_{i,y}\in K[x_1,\ldots, x_{n+r}]=:S^0$ the specialized polynomial. Let $\cV$ be the set in the space of coefficients consisting of all $y_{i,a}$ such that the system $F_{1,y},\ldots, F_{k,y}$ has a common zero in $\T_{K}^{n+r}$. Then, no monomial in $S^0_{\alpha_\delta+\alpha_Q+\alpha_P}$  can be in the image of $\Psi_{1,\alpha_\delta+\alpha_Q+\alpha_P},$ the rightmost nontrivial map in
 $Kos(S^0,F_{1,y},\ldots, F_{k,y})_{\alpha_\delta+\alpha_Q+\alpha_P},$   as otherwise by specializing the variables $x_i$ in the projective coordinates of the common zero we would get a contradiction. Thus, for any system contained in $\cV,\, \Psi_{1,\alpha_\delta+\alpha_Q+\alpha_P}$ is not onto. We conclude that $\cV$ is contained in the algebraic variety defined by the vanishing of all maximal minors of the rightmost nontrivial map in the complex
 $Kos(S'_\K,F_1^g{'}, \ldots, F_k^g{'})_{\alpha_\delta+\alpha_Q+\alpha_P},$ and hence the Zariski closure of $\cV$ is also contained in this variety. Since  $\T_{K}^{n+r}$ is dense in $K^{n+r}\setminus Z,$ where $Z$ has been defined in \eqref{z} by using the fan $\Sigma_{P+Q},$  and the toric variety $X_{P+Q}$ is the quotient of $K^{n+r}\setminus Z$ via the action of a group (cf. \cite[\S 5.1]{CLS11}), we have that the Zariski closure of $\cV$ contains those systems $F_1',\ldots, F_k'$ of degrees $\alpha_{P_1},\ldots, \alpha_{P_k}$ respectively having a common zero in $X_{P+Q},$ or equivalently systems $F_1,\ldots, F_k$ having a common zero in $X_P.$  For all these families, the rightmost nontrivial map in $Kos(S',F_1',\ldots, F_k')_{\alpha_\delta+\alpha_Q+\alpha_P}$ cannot be onto for all the explained above. This concludes with the proof of Theorem \ref{g1} for an algebraically closed field $K$.

 For the general case, as each of the vector spaces in $Kos(S_{\overline{K}}',F_1',\ldots, F_k')_{\alpha_\delta+\alpha_Q+\alpha_P}$  has a monomial basis, which is also a monomial basis of the corresponding space in $Kos(S_K',F_1',\ldots, F_k')_{\alpha_\delta+\alpha_Q+\alpha_P},$ and the matrices of the linear maps in both complexes (w.r.t. these bases) have their entries in $K$ (in fact they are the same matrices), the claim follows straightforwardly for $Kos(S_K',F_1',\ldots, F_k')_{\alpha_\delta+\alpha_Q+\alpha_P}$ as it holds for the first complex.
   \qed

\smallskip
\begin{proof}[Proof of Theorem \ref{spr}]
The hypothesis on the coordinates of $\delta$ implies that there is a one to one correspondence via \eqref{kor} between the monomials in  $S'_{\alpha_\delta+\alpha_Q+\alpha_P}$  and the elements of
 $(P+Q+\delta)\cap\Z^n$. The same applies to the monomials in $S'_{\alpha_\delta+\alpha_Q+\alpha_P-\alpha_{P_i}}$  and the elements of  $(P_1+\ldots +P_{i-1}+P_{i+1}+\ldots+P_k+Q+\delta)\cap\Z^n, \ 1\leq i\leq k.$

The claim is now a restatement of $(1)\iff (3)$ in Theorem \ref{g1} after dehomogenizing the rightmost nontrivial map of $Kos(S',F_1',\ldots, F_k')_{\alpha_\delta+\alpha_P+\alpha_Q}$ to the torus via \eqref{tx}.
\end{proof}
\bigskip
\section{Proof of Theorem \ref{csparse}}\label{ultt}
As $\pi(\Omega_{\cA_1,\ldots, \cA_{n+1}})$ has positive codimension in its codomain (see \eqref{pi}),  then
$F^g_1,\ldots,$ $F^g_{ n+1}$ do not have common zeroes in the toric variety $X_{P+Q}$ over the algebraic closure of $\K.$
From Theorem \ref{g1} we deduce straightforwardly that $Kos(S'_\K,F^g_1,\ldots, F^g_{n+1})_{\alpha_\delta+\alpha_Q+\alpha_P}$ is exact as a complex of $\K$-vector spaces, which is the first part of the statement.

The last part of the claim also follows straightforwardly from Theorem \ref{g1} because if $\Res_{\cA_1,\ldots, \cA_{n+1}}\neq1,$ then we have that $$\Res_{\cA_1,\ldots, \cA_{n+1}}(f_1,\ldots, f_{n+1})\neq0 \iff V_{X_{P+Q}}(F'_1,\ldots, F'_{n+1})=\emptyset$$ thanks to Proposition \ref{ress}.

The remaining parts of the statement are proven in Propositions \ref{gcd} and \ref{rmt} below. We start by comparing the Koszul complex of the theorem with the complex $Kos(S'_\K,F^g_1,\ldots, F^g_{n})_{\alpha_\delta+\alpha_Q+\alpha_P},$ where we have removed $F^g_{n+1},$ and use the results proven in \cite{BT22} for these kind of complexes.

Bernstein Theorem \cite{ber75} states that  $V_{X_{P+Q}}(F^g_1,\ldots, F^g_n)$ is finite, lies in $\T_{\overline{\mathbb{K}}}^n,$  and its cardinality is equal to  $MV(P_1,\ldots, P_n).$

As the degree $\alpha_\delta+\alpha_Q+\alpha_P$ satisfies the hypothesis
of Theorem $4.3$ in \cite{BT22} (thanks to Propositions \ref{bpf} and  \ref{must} with $k=n$), it turns out that  the following complex ((4.4) in \cite{BT22}) is exact:
\begin{equation}\label{cx4}
\begin{array}{r}
0\to{(S'_\K)}_{\alpha_\delta+\alpha_Q+\alpha_{P_{n+1}}}\stackrel{\tilde{\Psi}_{n}}{\to}\ldots\to\bigoplus\limits_{i=1}^n (S'_\K)_{\alpha_\delta+\alpha_Q+\alpha_P-\alpha_{P_i}}\stackrel{\tilde{\Psi}_{1}}{\to} (S'_\K)_{\alpha_\delta+\alpha_Q+\alpha_P} \quad\\
\stackrel{\tilde{\Psi}_{0}}{\to}  \big(S'_\K/I\big)_{\alpha_\delta+\alpha_Q+\alpha_P}\to0,
\end{array}
\end{equation}
with $I:=\langle F^g_1,\ldots, F^g_n\rangle\subset S'_\K.$ Note that this complex coincides with the complex $Kos(S'_\K,F^g_1,\ldots, F^g_{n})_{\alpha_\delta+\alpha_Q+\alpha_P}$ except for the last term.

From Theorem $4.3$ in \cite{BT22}, we also get that
\begin{equation}\label{mv}
\dim\left(S'_\K/I\right)_{\alpha_\delta+\alpha_Q+\alpha_P}=MV(P_1,\ldots, P_n).
\end{equation} because of Bernstein Theorem.  Note that if $MV(P_1,\ldots, P_n)=0,$  \eqref{cx4}  implies that $Kos(S'_\K,F^g_1,\ldots, F^g_{n})_{\alpha_\delta+\alpha_Q+\alpha_P}$  is exact.

We want to relate \eqref{cx4} with the exact complex $Kos(S'_\K,F^g_1,\ldots, F^g_{n}, F^g_{n+1})_{\alpha_\delta+\alpha_Q+\alpha_P},$ which is
\begin{equation}\label{cx5}
0\to{(S'_\K)}_{\alpha_\delta+\alpha_Q}\stackrel{\Psi_{n+1}}{\to} \ldots\to\bigoplus_{i=1}^{n+1} (S'_\K)_{\alpha_\delta+\alpha_Q+\alpha_P-\alpha_{P_i}}\stackrel{\Psi_{1}}{\to} (S'_\K)_{\alpha_\delta+\alpha_Q+\alpha_P}\to0.
\end{equation}
In \eqref{cx4}, all the $\K$-vector spaces have standard monomial bases, except the last one. Clearly one can choose a (class of) monomial basis in this quotient, which we also assume that has been fixed. Let $\widetilde{\Det}$ be the determinant of \eqref{cx4} with respect to the monomial bases of all the nontrivial vector spaces, and $\Det$ the determinant of \eqref{cx5} with respect to the monomial bases of its nontrivial vector spaces.  Following Proposition \ref{alt}, let $\tilde{M}_i,\, 0\leq i\leq n,$ be a square submatrix of the one given by $\tilde{\Psi}_{i}$ such that $\det(\tilde{M}_i)\neq0$ for all $i,$ so that
$$\widetilde{\Det}=\pm\prod_{i=0}^n \det(\tilde{M}_i)^{(-1)^{i}}.
$$
Note that obviously $\tilde{M}_i$ does not depend on the coefficients of $F^g_{n+1}$ for all $i=0,\ldots,n.$
\begin{lemma}\label{aucs}
There exists a square matrix $M_1$ of maximal rank of the form $\begin{pmatrix}\tilde{M}_1& A\\
* &\,B\end{pmatrix},$ such that it is a maximal square submatrix of the matrix of $\Psi_{1},$ with $\begin{pmatrix} A\\B\end{pmatrix}$ having $MV(P_1,\ldots, P_n)$ columns containing each of them coefficients of $F^g_{n+1}.$
\end{lemma}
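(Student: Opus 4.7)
The plan is to extend $\tilde M_1$ to an invertible $|B_1|\times|B_1|$ submatrix of the matrix of $\Psi_1$ by adjoining the $MV(P_1,\ldots,P_n)$ rows of $B_1$ not used by $\tilde M_1$ together with exactly $MV(P_1,\ldots,P_n)$ new columns from the $F^g_{n+1}$-block of $\Psi_1$, and to verify invertibility of the resulting matrix through a Schur-complement computation.

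First I would fix the structure of $\tilde M_1$. Writing $V_1=(S'_\K)_{\alpha_\delta+\alpha_Q+\alpha_P}$ with monomial basis $B_1$ and $I=\langle F^g_1,\ldots,F^g_n\rangle$, the exactness of \eqref{cx4} at $V_1$ together with \eqref{mv} gives that $\tilde\Psi_1$ has image $I\cap V_1$ of codimension $MV(P_1,\ldots,P_n)$ in $V_1$. Applying the descending method of Section~\ref{detc} to \eqref{cx4} with $I_0=B_0$ --- a monomial basis of $(S'_\K/I)_{\alpha_\delta+\alpha_Q+\alpha_P}$ chosen as a subset of $B_1$ --- and with the first step $B_1\setminus I_1=B_0$, the submatrix $\tilde M_0$ is the identity and $\tilde M_1$ is an invertible $(\dim V_1-MV)\times(\dim V_1-MV)$ submatrix of the matrix of $\tilde\Psi_1$ with rows indexed by $I_1=B_1\setminus B_0$. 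Its columns, viewed as full vectors in $V_1$, are linearly independent and span $I\cap V_1$; in particular, $V_1=(I\cap V_1)\oplus \langle B_0\rangle$.

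The key observation is that the exactness of \eqref{cx5} at $V_1$ implies that $\Psi_1$ is surjective, so after modding out its image by $\mbox{Im}(\tilde\Psi_1)=I\cap V_1$, multiplication by $F^g_{n+1}$ induces a surjection
\[
F^g_{n+1}\cdot (S'_\K)_{\alpha_\delta+\alpha_Q+\alpha_P-\alpha_{P_{n+1}}}\twoheadrightarrow (S'_\K/I)_{\alpha_\delta+\alpha_Q+\alpha_P}.
\]
Since the target has dimension $MV(P_1,\ldots,P_n)$, I would pick monomials $m_1,\ldots,m_{MV}\in(S'_\K)_{\alpha_\delta+\alpha_Q+\alpha_P-\alpha_{P_{n+1}}}$ such that the images of $F^g_{n+1}\cdot m_j$ in $(S'_\K/I)_{\alpha_\delta+\alpha_Q+\alpha_P}$ form a basis. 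Taking these $MV$ columns (each of whose entries are coefficients of $F^g_{n+1}$) as new columns and adjoining the rows indexed by $B_0$ gives the block matrix $M_1=\begin{pmatrix}\tilde M_1 & A \\ * & B \end{pmatrix}$ of the required shape.

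Invertibility of $M_1$ then follows from the Schur-complement identity $\det M_1=\det\tilde M_1\cdot\det(B-(*)\tilde M_1^{-1}A)$. Using the direct-sum decomposition $V_1=(I\cap V_1)\oplus\langle B_0\rangle$, the Schur complement $B-(*)\tilde M_1^{-1}A$ identifies with the matrix in the basis $B_0$ of the images of $F^g_{n+1}\cdot m_j$ under the projection $V_1\to V_1/(I\cap V_1)$, which is invertible by construction. Since the size of $M_1$ equals $|B_1|=\rank(\Psi_1)$, it is a maximal square submatrix. The main delicate point I anticipate is the identification of the Schur complement with the quotient-projection matrix, which relies on the descending-method choice $B_1\setminus I_1=B_0$ that makes $\langle B_0\rangle$ a complement of $I\cap V_1$; once this is set up, the column-selection step is an immediate consequence of the surjectivity of $\Psi_1$ from \eqref{cx5}.
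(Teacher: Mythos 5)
Your proof is correct and follows essentially the same route as the paper's: both complete the basis of $\mathrm{Im}(\tilde{\Psi}_1)=I\cap (S'_\K)_{\alpha_\delta+\alpha_Q+\alpha_P}$ spanned by the columns of $\tilde{M}_1$ with $MV(P_1,\ldots,P_n)$ columns from the $F^g_{n+1}$-block, using the surjectivity of $\Psi_1$ coming from the exactness of \eqref{cx5} together with the corank computation from \eqref{cx4} and \eqref{mv}. Your Schur-complement verification and explicit choice of the monomials $m_j$ just spell out the linear-independence check that the paper asserts directly via basis completion.
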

\begin{proof}
From \eqref{cx4} and \eqref{mv}, we deduce that the image of $\tilde{\Psi}_{1}$ has corank $MV(P_1,\ldots, P_n)$ in $(S'_\K)_{\alpha_\delta+\alpha_Q+\alpha_P}.$ As \eqref{cx5} is exact, we deduce that one can complete the linearly independent set of columns of  the submatrix of $\Psi_{1}$  indexed by the columns of $\tilde{M}_1$ (which has maximal rank),  with $MV(P_1,\ldots, P_n)$ columns  coming from the multiplication by $F^g_{n+1}$. This concludes with the proof.
\end{proof}

\begin{proposition}\label{lab}
Given $M_1$ as above, for all $i=2,\ldots, n$, after sorting properly the monomial bases, there exists a submatrix of maximal rank of $\Psi_{i}$ of the form
$$M_i=\begin{pmatrix}
\tilde{M}_i& *\\
{\bf0}& L_i
\end{pmatrix},
$$
with $L_i$ not depending on the coefficients of $F^g_{n+1},$ and  there is another matrix $M_{n+1}$ -submatrix of maximal rank of $\Psi_{n+1}$ not depending on the coefficients of $F^g_{n+1}$- such that
\begin{equation}\label{det}
\Det =\pm\prod_{i=1}^{n+1} \det(M_i)^{(-1)^{i+1}}.
\end{equation}
\end{proposition}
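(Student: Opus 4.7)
The plan is to exploit the block-triangular structure of each $\Psi_i$ inherited from splitting the basis of the $i$-th term $V_i$ of \eqref{cx5} according to whether the index $n+1$ appears in the Koszul generator $e_{j_1,\dots,j_i}$. This gives $V_i=V_i^u\oplus V_i^s$, with $V_i^u$ the $i$-th term of the ``Koszul part'' of \eqref{cx4} and $V_i^s$ canonically identified with the $(i-1)$-th term of
$\widehat{K}:=Kos(S'_\K,F^g_1,\dots,F^g_n)_{\alpha_\delta+\alpha_Q+\alpha_P-\alpha_{P_{n+1}}}$.
Since multiplication by any of $F^g_1,\dots,F^g_n$ preserves this splitting while multiplication by $F^g_{n+1}$ sends $V_i^s$ into $V_{i-1}^u$, one obtains
\[
\Psi_i \;=\; \begin{pmatrix}\tilde{\Psi}_i & \pm F^g_{n+1}\cdot\mathrm{Id}\\ 0 & \widehat{\Psi}_{i-1}\end{pmatrix},
\]
where $\widehat{\Psi}_{i-1}$ is the differential of $\widehat{K}$ and does not involve $F^g_{n+1}$.

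The first subtask is to extract the relevant exactness and dimension data for $\widehat{K}$ from what is already known. To this end I would use the short exact sequence of chain complexes
\[
0\to Kos(S'_\K,F^g_1,\dots,F^g_n)_{\alpha_\delta+\alpha_Q+\alpha_P}\longrightarrow Kos(S'_\K,F^g_1,\dots,F^g_{n+1})_{\alpha_\delta+\alpha_Q+\alpha_P}\longrightarrow \widehat{K}[1]\to 0
\]
coming from the inclusion of the ``unshifted'' part. The middle complex is exact by the already proved first assertion of Theorem \ref{csparse}, and the leftmost is exact in positive degrees with cokernel $(S'_\K/I)_{\alpha_\delta+\alpha_Q+\alpha_P}$ of dimension $\MV(P_1,\dots,P_n)$ by \eqref{cx4} and \eqref{mv}. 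The long exact sequence in homology then forces $\widehat{K}$ to be exact in positive degrees with cokernel $(S'_\K/I)_{\alpha_\delta+\alpha_Q+\alpha_P-\alpha_{P_{n+1}}}$, and identifies the connecting homomorphism with multiplication by $F^g_{n+1}$, hence an isomorphism between the two $\MV(P_1,\dots,P_n)$-dimensional quotients. With this in hand, I would refine Lemma \ref{aucs} by choosing the $\MV(P_1,\dots,P_n)$ monomials whose $F^g_{n+1}$-multiples complete $\tilde{M}_1$ so that their residues form a basis of $(S'_\K/I)_{\alpha-\alpha_{P_{n+1}}}$; the isomorphism above ensures that the $F^g_{n+1}$-multiples then give the basis of $(S'_\K/I)_{\alpha}$ required by Lemma \ref{aucs}.

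With these preparations, $M_2,\dots,M_{n+1}$ are built inductively via the descending method on \eqref{cx5}. Assume $I_{i-1}=I_{i-1}^u\sqcup I_{i-1}^s$ has been produced with $I_{i-1}^u$ from the descending method on \eqref{cx4} and $I_{i-1}^s$ from the descending method on $\widehat{K}$ started with the complement of the chosen monomials. Restricting $\Psi_i$ to rows $I_{i-1}$ preserves the block-triangular shape, so it suffices to independently pick columns $C_i^u\subset B_i^u$ making $\tilde{M}_i=\tilde{\Psi}_i|_{(I_{i-1}^u,C_i^u)}$ invertible (by exactness of \eqref{cx4} at position $i-1$) and columns $C_i^s\subset B_i^s$ making $L_i=\pm\widehat{\Psi}_{i-1}|_{(I_{i-1}^s,C_i^s)}$ invertible (by exactness of $\widehat{K}$ at position $i-2$); the block form of $M_i$ and the $F^g_{n+1}$-independence of $L_i$ then follow. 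At the leftmost step, injectivity of $\tilde{\Psi}_n$ yields $I_n^u=\emptyset$, so $M_{n+1}$ is automatically a maximal invertible submatrix of $\widehat{\Psi}_n$, which is also free of $F^g_{n+1}$. Admissibility of $(I_i)_{0\le i\le n+1}$ is immediate from the construction, so \eqref{det} follows from Proposition \ref{alt}.

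The main obstacle is the hinge step between Lemma \ref{aucs} and the inductive construction: a single collection of $\MV(P_1,\dots,P_n)$ monomials must simultaneously provide a basis of $(S'_\K/I)_{\alpha-\alpha_{P_{n+1}}}$ (so that the descending method on $\widehat{K}$ starts consistently) and yield a basis of $(S'_\K/I)_{\alpha}$ after multiplication by $F^g_{n+1}$ (so that Lemma \ref{aucs}'s $M_1$ is valid). The isomorphism induced by multiplication by $F^g_{n+1}$ between these two $\MV(P_1,\dots,P_n)$-dimensional quotients, obtained as the connecting map of the long exact sequence above, is precisely what makes this compatibility possible.
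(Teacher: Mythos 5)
Your proof is correct and rests on the same two pillars as the paper's: the block upper-triangular decomposition of $\Psi_i$ according to whether the index $n+1$ occurs in the Koszul generator, and the descending method to produce an admissible collection refining the given $\tilde{M}_i$'s, with $I_n\subseteq B'_n$ forced by injectivity of $\tilde{\Psi}_n$ so that $M_{n+1}$ is free of the coefficients of $F^g_{n+1}$. Where you diverge is in how the invertible lower-right blocks $L_i$ are produced. The paper never introduces the shifted complex $\widehat{K}$: it simply observes that $|\Psi_i|_{B_i,I_{i-1}}$ has full row rank (by exactness of \eqref{cx5} and the inductive invariant of the descending method), that the columns indexed by $\tilde{B}_i\setminus\tilde{I}_i$ are a maximal independent subset of those indexed by $\tilde{B}_i$, and extends them by columns from $B'_i$; the block shape and the invertibility of $L_i$ then come for free, since the lower-left block vanishes and $\det M_i=\det\tilde M_i\cdot\det L_i\neq 0$. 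You instead prove exactness of $\widehat{K}$ in positive degrees via the mapping-cone long exact sequence, identify the connecting map with multiplication by $F^g_{n+1}$, and run a separate descending method on (the augmentation of) $\widehat{K}$ so that each $L_i$ is exhibited as an invertible submatrix of $\widehat{\Psi}_{i-1}$. This is more machinery than needed, but it buys a sharper structural statement (the $L_i$ are genuinely maximal-rank submatrices of the differentials of $Kos(S'_\K,F^g_1,\ldots,F^g_n)_{\alpha_\delta+\alpha_Q+\alpha_P-\alpha_{P_{n+1}}}$, not just some $F^g_{n+1}$-free matrices), and your ``hinge'' observation is correctly identified and correctly resolved. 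One small mismatch with the statement: the proposition takes $M_1$ from Lemma \ref{aucs} as \emph{given}, whereas you construct a particular $M_1$ whose $MV(P_1,\ldots,P_n)$ extra monomials reduce to a basis of $(S'_\K/I)_{\alpha_\delta+\alpha_Q+\alpha_P-\alpha_{P_{n+1}}}$; this is harmless because the isomorphism $(S'_\K/I)_{\alpha-\alpha_{P_{n+1}}}\to(S'_\K/I)_{\alpha}$ induced by multiplication by $F^g_{n+1}$ (your connecting map) shows that \emph{every} $M_1$ satisfying Lemma \ref{aucs} automatically has this property, but you should say so explicitly to cover the statement as written.
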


\begin{proof}
For $i=2, \dots, n$, we can decompose $\Psi_{i}$ as follows

\begin{equation}\label{dec}
\begin{array}{ccl}\bigg(\bigoplus\limits_{1\leq j_1<\ldots<j_i\leq n}(S'_\K)_{\alpha_\delta+\alpha_Q+\alpha_P-\sum\limits_{s=1}^i \alpha_{P_{j_s}}}\bigg) & \oplus &\bigg(\bigoplus\limits_{1\leq j_1<\ldots<j_{i-1}\leq n}(S'_\K)_{\alpha_\delta+\alpha_Q+\alpha_P-\alpha_{P_{n+1}}-\sum\limits_{s=1}^{i-1} \alpha_{P_{j_s}}}\bigg)
\\
& \downarrow &
\\
\bigg(\bigoplus\limits_{1\leq j_1<\ldots<j_{i-1}\leq n}(S'_\K)_{\alpha_\delta+\alpha_Q+\alpha_P-\sum\limits_{s=1}^{i-1} \alpha_{P_{j_s}}}\bigg)&\oplus&\bigg(\bigoplus\limits_{1\leq j_1<\ldots<j_{i-2}\leq n}(S'_\K)_{\alpha_\delta+\alpha_Q+\alpha_P-\alpha_{P_{n+1}}-\sum\limits_{s=1}^{i-2} \alpha_{P_{j_s}}}\bigg)
\end{array}
\end{equation}
(when $i=2$, the last factor has only one nontrivial term $(S'_\K)_{\alpha_\delta+\alpha_Q+\alpha_P-\alpha_{P_{n+1}}}$).

Let $B_i = \tilde{B}_i \cup B'_i$, for $i=1,\dots, n$, be the decomposition of the chosen basis of $\bigoplus\limits_{1\leq j_1<\ldots<j_i\leq n+1}(S'_\K)_{\alpha_\delta+\alpha_Q+\alpha_P-\sum\limits_{s=1}^i \alpha_{P_{j_s}}}$, where $\tilde{B}_i$ is a basis of $\bigoplus\limits_{1\leq j_1<\ldots<j_i\leq n}(S'_\K)_{\alpha_\delta+\alpha_Q+\alpha_P-\sum\limits_{s=1}^i \alpha_{P_{j_s}}}$ and $B_i'$ is a basis of $\bigoplus\limits_{1\leq j_1<\ldots<j_{i-1}\leq n}(S'_\K)_{\alpha_\delta+\alpha_Q+\alpha_P-\alpha_{P_{n+1}}-\sum\limits_{s=1}^{i-1} \alpha_{P_{j_s}}}$.

With this order of bases, we have that the matrix of  $\Psi_{i}$ with respect to the bases $B_i$ and $B_{i-1}$ can be decomposed as
$$|\Psi_i|_{B_i, B_{i-1}}= \begin{pmatrix}
 |\tilde{\Psi}_{i}|_{ \tilde{B}_i, \tilde{B}_{i-1}}& A_i\\
 {\bf 0}& N_i
\end{pmatrix},
$$
with $N_i$ not depending on the coefficients of $F^g_{n+1}.$

Denote with $\tilde{I}_i\subseteq \tilde{B}_i$ the set such that
$\tilde{M}_i = |\tilde{\Psi}_{i}|_{ \tilde{B}_i\setminus \tilde{I}_i, \tilde{I}_{i-1}}$. Here and in the rest of the proof, for a matrix $M$ with columns and rows indexed by sets $\mathcal{C}$ and $\mathcal{R}$ respectively, given $\mathcal{C}_0\subseteq \mathcal{C}$ and $\mathcal{R}_0\subseteq \mathcal{R}$, we will write $M_{\mathcal{C}_0,\mathcal{R}_0}$ for the submatrix of $M$ consisting of the columns indexed by $\mathcal{C}_0$ and the rows indexed by $\mathcal{R}_0$.

Assume that, for every $j<i$, a subset $I_{j}\subseteq B_j$ of the form  $I_{j}=\tilde{I}_{j} \cup I'_{j}$ with  $I'_{j}\subseteq B'_{j}$ has been chosen such that
$M_j= |\Psi_j|_{B_j\setminus I_j, I_{j-1}}$ is a submatrix of maximal rank of $|\Psi_j|_{B_j, B_{j-1}}$ as in the statement of the Proposition.

We have that $\tilde{M}_i$ is a submatrix of $|\tilde{\Psi}_{i}|_{ \tilde{B}_i, \tilde{I}_{i-1}}$ of maximal rank equal to $\#\tilde{I}_{i-1} = \dim(\ker(\tilde{\Psi}_{i-1})) = \dim(\mbox{Im}(\tilde{\Psi}_i))$.
In order to obtain the matrix $M_i$, we consider $|\Psi_i|_{B_i, I_{i-1}}=\begin{pmatrix}
 |\tilde{\Psi}_{i}|_{ \tilde{B}_i, \tilde{I}_{i-1}}& (A_i)_{B'_i,\tilde{I}_{i-1}}\\
 {\bf 0}& (N_i)_{B'_{i},I'_{i-1}}\end{pmatrix}$.
This matrix has full row rank equal to $\#I_{i-1}=\dim(\ker(\Psi_{i-1}))= \dim(\mbox{Im}(\Psi_i))$; then, we can choose $\#I_{i-1}$ linearly independent columns. By the construction of $\tilde{M}_i$, from the columns indexed by $\tilde{B}_i$, those corresponding to $\tilde{B}_i\setminus \tilde{I}_i$ are a maximal linearly independent subset. We extend it to a basis of the column space of $|\Psi_i|_{B_i, I_{i-1}}$  by adding some columns indexed by $B'_i$. Let $I'_i\subseteq B'_i$ be the index set of the columns we do not add and $I_i:= \tilde{I}_i \cup I'_i$. We have that $M_i:=|\Psi_{i}|_{ B_i\setminus I_i, I_{i-1}}$ is a submatrix of $|\Psi_i|_{B_i, B_{i-1}}$ of maximal rank, and it is of the form
$$M_i = \begin{pmatrix}
 |\tilde{\Psi}_{i}|_{ \tilde{B}_i\setminus \tilde{I}_i, \tilde{I}_{i-1}}& (A_i)_{B'_i\setminus I'_i,\tilde{I}_{i-1}}\\
 {\bf 0}& (N_i)_{B'_{i}\setminus I'_i,I'_{i-1}}\end{pmatrix} = \begin{pmatrix}
\tilde{M}_{i} &*\\
 {\bf 0}& L_i\end{pmatrix}$$
The claim now follows straightforwardly since $L_i$ is a submatrix of $N_i$, which does not depend on the coefficients of $F_{n+1}^g$.

To finish the proof, note that having chosen $M_1,\ldots, M_n$ univocally determines $M_{n+1}=|\Psi_{n+1}|_{B_{n+1}, I_{n}}$, since $\Psi_{n+1}$ is injective. As $\tilde{M}_n = |\tilde{\Psi}_n|_{\tilde{B}_n, \tilde{I}_{n-1}}$ (because $\tilde{\Psi}_n$ is also injective),  it turns out that $\tilde{I}_n = \emptyset$ and so, the columns of $M_n$ are indexed by a set $B_n\setminus I_n$ with
$I_n\subseteq B'_n$. Then, the rows of $|\Psi_{n+1}|_{B_{n+1}, I_{n}}$ do not depend on the coefficients of $F_{n+1}^g$. This concludes with the proof of the Proposition.
\end{proof}

\begin{corollary}\label{grad}
For all $i=1,\ldots, n+1,$ the degree of $\Det$ with respect to the coefficients of $F^g_i$ is equal to $MV(P_1,\ldots, \check{P_i},\ldots, P_{n+1}).$
\end{corollary}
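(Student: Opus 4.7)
The plan is to read off the degree of $\Det$ in the coefficients of $F^g_{n+1}$ directly from the product formula \eqref{det} provided by Proposition \ref{lab}. Since each of the matrices $M_2, \ldots, M_{n+1}$ has entries independent of the coefficients of $F^g_{n+1}$, the degree of $\Det$ in the variables $c_{n+1, b}$ coincides with the degree of $\det(M_1)$ in those variables, and the computation reduces to analyzing the structure of $M_1$.

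For this analysis I would exploit that every entry of $\Psi_1$ is either zero or equal to a single indeterminate coefficient $c_{i, b}$. By Lemma \ref{aucs}, $M_1$ has exactly $MV(P_1, \ldots, P_n)$ columns arising from multiplication by $F^g_{n+1}$, so each entry of those columns is either $0$ or equal to some $c_{n+1, b}$, while the remaining columns of $M_1$ involve only the coefficients of $F^g_1, \ldots, F^g_n$. By the Leibniz expansion, each term of $\det(M_1)$ selects exactly one entry from every column and therefore carries exactly $MV(P_1, \ldots, P_n)$ factors drawn from $\{c_{n+1, b} : b \in \cA_{n+1}\}$. Consequently $\det(M_1)$ is homogeneous of degree $MV(P_1, \ldots, P_n)$ in the coefficients of $F^g_{n+1}$; since $M_1$ has maximal rank by the same lemma, $\det(M_1)$ is a nonzero polynomial, and a nonzero homogeneous polynomial of degree $d$ has degree exactly $d$, so this degree is precisely $MV(P_1, \ldots, P_n)$.

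For a general index $i \in \{1, \ldots, n+1\}$, the Koszul complex $Kos(S'_\K, F^g_1, \ldots, F^g_{n+1})_{\alpha_\delta + \alpha_Q + \alpha_P}$ and its determinant are symmetric in the $F^g_j$'s up to sign, so one can simply interchange the roles of $F^g_i$ and $F^g_{n+1}$ and rerun Proposition \ref{lab} and the argument above; this yields the claimed degree $MV(P_1, \ldots, \check{P_i}, \ldots, P_{n+1})$ in the coefficients of $F^g_i$.

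The main point to verify carefully is that Lemma \ref{aucs} really furnishes exactly $MV(P_1, \ldots, P_n)$ columns involving $F^g_{n+1}$ in an invertible square submatrix $M_1$ of the matrix of $\Psi_1$; this count is precisely the corank of $\tilde{\Psi}_1$ given by the dimension identity \eqref{mv}, which in turn rests on Bernstein's theorem. Once this is in place, the homogeneity-plus-nonvanishing argument is essentially formal, and no further technical obstacle arises.
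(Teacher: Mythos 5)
Your proof is correct and follows essentially the same route as the paper: read the degree off the factorization $\Det=\pm\prod_{i}\det(M_i)^{(-1)^{i+1}}$ of Proposition \ref{lab}, observe that only $\det(M_1)$ involves the coefficients of $F^g_{n+1}$, count that degree via the $MV(P_1,\ldots,P_n)$ columns supplied by Lemma \ref{aucs} (your Leibniz/homogeneity count makes explicit what the paper leaves implicit), and permute the polynomials for general $i$. One small imprecision: for $2\le i\le n$ the matrix $M_i=\begin{pmatrix}\tilde M_i & *\\ \mathbf{0} & L_i\end{pmatrix}$ need not have all entries independent of the coefficients of $F^g_{n+1}$, since the block $*$ is a submatrix of $A_i$, which records multiplication by $F^g_{n+1}$; what is true, and all your argument needs, is that $\det(M_i)=\det(\tilde M_i)\det(L_i)$ is independent of those coefficients because both diagonal blocks are.
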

\begin{proof}
From Lemma \ref{aucs} and Proposition \ref{lab} we deduce the claim for $i=n+1.$ By permuting the input polynomials, one can get the same statement for any $i=1,\ldots, n.$ \end{proof}

\begin{proposition}\label{gcd}
Up to an invertible element in $\Q,\, \Det$ is equal to the $\gcd$ in $\Q[c_{i,b}]$ of all the maximal minors of $\Psi_{1}.$
\end{proposition}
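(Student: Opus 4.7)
The plan is to invoke Theorem \ref{mult} for the complex $\mathcal{K}^\bullet := Kos(S'_R,F_1^g,\ldots,F_{n+1}^g)_{\alpha_\delta+\alpha_Q+\alpha_P}$ thought of as a complex of finitely generated free modules over $R := \Q[c_{i,b}]$, which is a noetherian factorial regular integral domain whose units are precisely the nonzero rationals. Each nonzero term of $\mathcal{K}^\bullet$ is such a free $R$-module with its standard monomial basis, and by the first part of Theorem \ref{csparse} the complex becomes exact after tensoring with $\K = K(R)$, so it is generically exact. Once the multiplicity hypothesis of Theorem \ref{mult} is verified, we obtain the stated equality of $\Det$ and the $\gcd$ of the maximal minors of $\Psi_1$ up to an invertible element of $\Q$.

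To check that $\mult_{\langle p\rangle}(H^i(\mathcal{K}^\bullet)) = 0$ for every prime element $p \in R$ and every $i > 0$, I would split into cases according to whether $p$ divides the sparse eliminant $\Elim_{\cA_1,\ldots,\cA_{n+1}}$. If $p \nmid \Elim$, then by Proposition \ref{ress} the generic specialization on $V(p) \subset \Spec R$ has no common zero of the $F_i^g$ in $X_{P+Q}$ over the algebraic closure of $k_{\langle p\rangle}$, so Theorem \ref{g1} gives full exactness of the specialized complex. A standard Nakayama argument applied to a bounded complex of finitely generated free modules over the local ring $R_{\langle p\rangle}$ then forces $H^i(\mathcal{K}^\bullet)_{\langle p\rangle} = 0$ for every $i$, and in particular the multiplicity is zero.

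The remaining case is $p = \Elim_{\cA_1,\ldots,\cA_{n+1}}$ up to a unit in $R$, where the specialization acquires common zeros in the torus $\T^n \subset X_{P+Q}$. The plan here is to exploit the short exact sequence of graded Koszul complexes
\[
0 \to Kos(F_1^g,\ldots,F_n^g)_{\alpha_\delta+\alpha_Q+\alpha_P} \to \mathcal{K}^\bullet \to Kos(F_1^g,\ldots,F_n^g)_{\alpha_\delta+\alpha_Q+\alpha_P-\alpha_{P_{n+1}}}[1] \to 0
\]
coming from the split of the basis indices $\{j_1,\ldots,j_i\}$ of $\mathcal{K}^\bullet$ according to whether they contain $n+1$, together with the induced long exact sequence in homology. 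The first outer complex has homology concentrated in position $0$ at the generic point of $V(\Elim)$ by \eqref{cx4}, and the same is expected for the second provided the basepoint-freeness of Proposition \ref{bpf} and the Musta\c{t}\u{a}-type vanishing of Proposition \ref{must} extend to the shifted degree $\alpha_\delta+\alpha_Q+\alpha_P-\alpha_{P_{n+1}}$, so that Theorem $4.3$ of \cite{BT22} applies there as well.

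The main obstacle is the final step of the $\Elim$-analysis: the connecting homomorphism of the long exact sequence is multiplication by $F_{n+1}^g$ between the two zeroth cohomologies, and at the generic point of $V(\Elim)$ this map has a nontrivial kernel because $F_{n+1}^g$ becomes a zero-divisor on the Artinian quotient $S'/(F_1^g,\ldots,F_n^g)$. Ruling out any surviving positive-multiplicity contribution to $H^i(\mathcal{K}^\bullet)_{\langle \Elim\rangle}$ for $i > 0$ is the crux of the proof, and should be achievable by combining the degree constraint of Corollary \ref{grad} (which already pins the $F_{n+1}^g$-degree of $\Det$ to $MV(P_1,\ldots,P_n)$) with the observation that $\Elim$ is the only irreducible factor of $\Det$ involving the coefficients of $F_{n+1}^g$. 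Once this is settled, Theorem \ref{mult} applies and delivers the identification of $\Det$ with the $\gcd$ of the maximal minors of $\Psi_1$ up to a nonzero element of $\Q$.
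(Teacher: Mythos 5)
Your overall strategy --- realize the graded Koszul complex over $R=\Q[c_{i,b}]$, note it is generically exact, and verify the multiplicity hypothesis of Theorem \ref{mult} prime by prime --- is exactly the paper's, and your treatment of the primes $p\nmid\Elim_{\cA_1,\ldots,\cA_{n+1}}$ (specialized exactness from Theorem \ref{g1} plus Nakayama over the local ring) is a correct, if different, way to dispose of that case. The genuine gap is the case $p\sim\Elim_{\cA_1,\ldots,\cA_{n+1}}$, which you yourself flag as "the crux" without closing it. Your long exact sequence reduces the problem to showing that $\ker\big(\times F^g_{n+1}\colon (S'_R/I)_{\alpha-\alpha_{P_{n+1}}}\to (S'_R/I)_{\alpha}\big)$ vanishes after localizing at $\langle\Elim\rangle$; this does not follow from anything you cite, and the ingredients you propose to finish with are circular: that $\Elim$ is the \emph{only} irreducible factor of $\Det$ involving the coefficients of $F^g_{n+1}$ is essentially Proposition \ref{rmt}, which in the paper is deduced \emph{from} Proposition \ref{gcd}. (One could salvage your route by proving that $(S'_R/I)_{\alpha-\alpha_{P_{n+1}}}$ localized at $\langle\Elim\rangle$ is free of rank $\MV(P_1,\ldots,P_n)$, so that the kernel of an injective-after-tensoring-with-$\K$ map is a torsion submodule of a torsion-free module; but that requires a separate argument you have not supplied.)

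The paper avoids the case split entirely with a symmetry trick built on Proposition \ref{lab}: any irreducible $p\in R$ must involve some coefficient $c_{i_0,b}$, and after permuting the input polynomials one may assume $i_0=n+1$. The square submatrices $M_j$, $j\ge 2$, produced in Proposition \ref{lab} have determinants that are nonzero polynomials \emph{not involving} the coefficients of $F^g_{n+1}$, hence are not divisible by $p$ and are units in $R_{\langle p\rangle}$. By the descending-method structure this forces $Kos(S'_{R_{\langle p\rangle}},F^g_1,\ldots,F^g_{n+1})_{\alpha_\delta+\alpha_Q+\alpha_P}$ to be exact except possibly at the rightmost nontrivial map, i.e.\ $H^j(\cdot)_{\langle p\rangle}=0$ for all $j>0$ --- uniformly in $p$, including $p=\Elim$. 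That is the missing idea; without it (or a completed torsion-freeness argument at $\langle\Elim\rangle$) your proof does not go through.
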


\begin{proof}
Let $R:=\Q[c_{i,b}],$ and consider $Kos(S'_R,F^g_1,\ldots, F^g_{n+1})_{\alpha_\delta+\alpha_Q+\alpha_P},$
which is a complex of $R$-modules. As this complex becomes  $Kos(S'_\K,F^g_1,\ldots, F^g_{n+1})_{\alpha_\delta+\alpha_Q+\alpha_P}$ after tensoring it with $K(R)=\K,$ which is exact, we deduce then that it is generically exact as defined in \eqref{cfx}. We will use Theorem \ref{mult} to prove our claim.

To do so, let $p\in R$ be any irreducible polynomial. Then, $p$ depends on some coefficient $c_{i_0,b}$. Assume w.l.o.g.~that $i_0=n+1.$ From Proposition \ref{lab}, all the matrices $M_j$ are invertible in the local ring $R_{\langle p\rangle},
j\geq 2,$ as these minors cannot be a multiple of $p$ because they do not depend on the coefficients of $F^g_{n+1}.$
We deduce then straightforwardly that $Kos(S'_{R_{\langle p\rangle}},F^g_1,\ldots, F^g_{n+1})_{\alpha_\delta+\alpha_Q+\alpha_P}$
is exact except at the rightmost nontrivial map, which implies then that  the $j$-th cohomology of the complex
$H^j\big(Kos(S'_R,F^g_1,\ldots, F^g_{n+1})_{\alpha_\delta+\alpha_Q+\alpha_P}\big)$ is zero when localized in $\langle p\rangle$ for all $j>0,$ and for all irreducible $p\in R.$ The claim now follows from Theorem \ref{mult}.
\end{proof}

\begin{proposition}\label{rmt}
The $\gcd$ in $\Z[c_{i,b}]$ of all the maximal minors of $\Psi_{1}$ is equal to  $\pm \Res_{\cA_1,\ldots, \cA_{n+1}}.$
\end{proposition}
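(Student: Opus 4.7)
The plan is to pin down the gcd $G$ of the maximal minors of $\Psi_1$ by showing that $G$ and $\Res_{\cA_1,\ldots,\cA_{n+1}}$ cut out the same hypersurface in the coefficient space, upgrading this via the irreducibility of $\Elim_{\cA_1,\ldots,\cA_{n+1}}$ to a multiplicative identity $G=c\,\Res_{\cA_1,\ldots,\cA_{n+1}}$ with $c\in\Q^*$, and finally verifying that $c=\pm 1$. I may assume $\Elim_{\cA_1,\ldots,\cA_{n+1}}\neq 1$, since otherwise $\Res_{\cA_1,\ldots,\cA_{n+1}}=1$ and the statement reduces, via Proposition \ref{gcd}, to showing $G=\pm 1$, which follows because the complex is generically exact.

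First I would establish $\{\Elim_{\cA_1,\ldots,\cA_{n+1}}=0\}\subseteq\{G=0\}$. Take a specialization $p$ of the coefficients with $\Elim_{\cA_1,\ldots,\cA_{n+1}}(p)=0$; then $\Res_{\cA_1,\ldots,\cA_{n+1}}(p)=0$, so by Proposition \ref{ress}(2) the homogenized specialized system has a common zero in $X_{P+Q}$, and Theorem \ref{g1} forces the specialized $\Psi_1$ at $p$ to fail to be surjective, so every maximal minor $\mu$ of $\Psi_1$ vanishes at $p$. Writing $\mu=G\cdot h_\mu$ with $h_\mu\in\Z[c_{i,b}]$, if $G(p)\neq 0$ held then $h_\mu(p)=0$ for every $\mu$; Hilbert's Nullstellensatz together with the irreducibility of $\Elim_{\cA_1,\ldots,\cA_{n+1}}$ would yield $\Elim_{\cA_1,\ldots,\cA_{n+1}}\mid h_\mu$ for all $\mu$, making $\Elim_{\cA_1,\ldots,\cA_{n+1}}\cdot G$ a common divisor of the minors strictly exceeding the gcd, a contradiction. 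Hence $G(p)=0$. The reverse inclusion is immediate: $G(p)=0$ gives $\mu(p)=0$ for every maximal minor, so the specialized $\Psi_1$ is not surjective and Theorem \ref{g1} together with Proposition \ref{ress} force $\Elim_{\cA_1,\ldots,\cA_{n+1}}(p)=0$.

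Together these give $\{G=0\}=\{\Elim_{\cA_1,\ldots,\cA_{n+1}}=0\}$, an irreducible hypersurface. Since $\Elim_{\cA_1,\ldots,\cA_{n+1}}$ is irreducible, the prime factorization of $G$ in $\Q[c_{i,b}]$ can only involve $\Elim_{\cA_1,\ldots,\cA_{n+1}}$, and so $G=c\,\Elim_{\cA_1,\ldots,\cA_{n+1}}^{k}$ for some $c\in\Q^{*}$ and $k\geq 1$. By Proposition \ref{gcd}, $G$ equals $\Det$ up to a rational factor, so Corollary \ref{grad} gives $\deg_i(G)=\MV(P_1,\ldots,\check{P_i},\ldots,P_{n+1})$ for each $i$, which matches $\deg_i(\Res_{\cA_1,\ldots,\cA_{n+1}})=d\cdot\deg_i(\Elim_{\cA_1,\ldots,\cA_{n+1}})$ from Proposition \ref{ress}(1), where $\Res_{\cA_1,\ldots,\cA_{n+1}}=\Elim_{\cA_1,\ldots,\cA_{n+1}}^{d}$. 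Comparing multidegrees forces $k=d$, hence $G=c\,\Res_{\cA_1,\ldots,\cA_{n+1}}$ in $\Q[c_{i,b}]$. Since $\Res_{\cA_1,\ldots,\cA_{n+1}}$ is primitive in $\Z[c_{i,b}]$ and $G\in\Z[c_{i,b}]$, Gauss's lemma upgrades $c$ to an integer.

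The remaining subtlety, which I expect to be the main obstacle, is to show that this integer $c$ is $\pm 1$, equivalently that $G$ is primitive in $\Z[c_{i,b}]$. My plan is to exploit the fact that every nonzero entry of the matrix of $\Psi_1$ in the monomial bases is a single indeterminate $c_{i,b}$, so each maximal minor of $\Psi_1$ expands as a signed sum of squarefree monomials in the $c_{i,b}$'s; for a suitably generic choice of the vector $\delta$, or alternatively by picking a Canny-Emiris type square submatrix as in \cite{CE93}, the distinct permutations in the Leibniz expansion produce pairwise distinct monomials, so the resulting minor has coefficients in $\{-1,0,+1\}$ and is primitive in $\Z[c_{i,b}]$ whenever nonzero. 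Since $G$ divides this primitive maximal minor, the content of $G$ is $\pm 1$; combined with the previous step, $c=\pm 1$, finishing the proof.
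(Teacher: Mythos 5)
Your reduction to $G=c\,\Res_{\cA_1,\ldots, \cA_{n+1}}$ with $c\in\Z$ follows essentially the paper's own route: the irreducible factors of the gcd are identified with $\Elim_{\cA_1,\ldots, \cA_{n+1}}$ via Theorem \ref{g1} and Proposition \ref{ress}(2), the exponent is pinned down by matching the multidegree from Corollary \ref{grad} against Proposition \ref{ress}(1), and primitivity of the resultant plus Gauss's lemma gives $c\in\Z$. (Your dispatch of the case $\Elim_{\cA_1,\ldots,\cA_{n+1}}=1$ by ``generic exactness'' only yields $G\neq 0$, not $G=\pm1$, but the arguments you give afterwards do cover it, so this is cosmetic.) Up to that point the proposal is sound and matches the paper.

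The genuine gap is in the last step. It is false that a maximal minor of $\Psi_1$ --- Canny--Emiris type or otherwise --- has all its coefficients in $\{-1,0,+1\}$, because the Leibniz expansion does not produce pairwise distinct monomials: each indeterminate $c_{i,b}$ occupies many positions in the block of $\Psi_1$ corresponding to multiplication by $f^g_i$, so distinct permutations routinely contribute the same monomial, and no choice of $\delta$ avoids this. Already for $n=1$, $\cA_1=\cA_2=\{0,1,2\}$ and $\delta=\frac12$, the map $\Psi_1$ is the $4\times4$ Sylvester matrix of two quadratics and its unique maximal minor carries the coefficient $-2$ on $c_{1,0}c_{1,2}c_{2,0}c_{2,2}$; likewise the resultant \eqref{res123} of the running example has a coefficient $2$. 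What is true, and what the paper actually invokes, is the much weaker statement that some nonzero maximal minor has \emph{one} coefficient equal to $\pm1$: by \cite[Proposition 4.13]{DJS23}, a Canny--Emiris minor has initial term, with respect to a suitable monomial order, equal to the product of its principal-diagonal entries, a single monomial with coefficient $\pm 1$. That alone forces the content of this minor, and hence of its divisor $G$, to be $\pm1$, which is all you need. So your conclusion is recoverable, but the mechanism you propose must be replaced by this leading-term (or ``nonzero modulo every prime $p$'') argument.
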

\begin{proof}
From Proposition \ref{gcd}, we have that $\Det\in\Q[c_{i,b}].$ Consider its factorization in irreducibles in this ring: $\Det=\prod_{j=1}^N p_j^{e_j}.$ If $p_j$ does not coincide up to a nonzero rational factor with $\Elim_{\cA_1,\ldots, \cA_{n+1}},$ then the set
$\{p_j=0\}\cap\{\Elim_{\cA_1,\ldots, \cA_{n+1}}\neq0\}$ is not empty in any algebraically closed field of characteristic zero $K$. Pick a system of polynomials $F_1,\ldots, F_{n+1}\in S_K$ with coefficients in this set. From Theorem \ref{g1}, the specialized complex
$Kos(S'_K,F'_1,\ldots,  F'_{n+1})_{\alpha_\delta+\alpha_Q+\alpha_P}$ is exact. But this implies that the map $\Psi_{1}$ is onto, so one of the maximal minors of it must be nonzero. As $p_j$ is a common factor of all these minors, this gives a contradiction.

So, we have that  $\Det=\lambda\,\Elim_{\cA_1,\ldots, \cA_{n+1}}^e$ for a suitable $e\in\N,$ and $\lambda\in\Q^\times.$ To compute $e$ we use Corollary \ref{grad} and Proposition \ref{ress}(1). We then have that $\Det=\lambda\,\Res_{\cA_1,\ldots, \cA_{n+1}}.$  On the other hand, all the maximal minors of $\Psi_{1}$ belong to $\Z[c_{i,b}],$ and we have just shown that there are no irreducible elements in $\Q[c_{i,b}]$ dividing it apart from $\Elim_{\cA_1,\ldots, \cA_{n+1}}.$ So, the $\gcd$ of all these maximal minors is equal to $z\in\Z^\times$ times  $\Res_{\cA_1,\ldots, \cA_{n+1}}.$ To show that $z=\pm1,$ it is enough to exhibit a minor which is not zero modulo $p$ for any prime integer $p\in\Z.$  These can be found from the Canny-Emiris type matrices produced in \cite{DJS23}. For instance, in Proposition 4.13 in loc.~cit it is shown that there exists a nonzero maximal minor of $\Psi_{1}$ whose initial term with respect to some monomial order equals a monomial, i.e.~this term does not vanish modulo $p$ for any prime $p\in\Z.$ This completes the proof.
\end{proof}

\bigskip

\end{document}